\newcommand{\added}[1]{{{#1}}}
\newcommand{\C}{\mathbb{C}}
\newcommand{\R}{\mathbb{R}}
\newcommand{\Z}{\mathbb{Z}}
\newcommand{\tf}{\mathfrak{t}}
\newcommand{\sfk}{\mathfrak{s}}
\newcommand{\bou}{\partial}
\DeclareMathOperator{\Spin}{Spin}
\newcommand{\Gc}{\mathcal{G}}
\newcommand{\Spinc}{\(\Spin^c\,\)}
\newcommand{\SpincX}[1]{\Spin^c( #1 )}
\newcommand{\Char}{\mbox{Char}}
\newcommand{\PD}{\mbox{PD}}
\newcommand{\CFb}{\mathbb{CF}}
\newcommand{\CFKb}{\mathbb{CFK}}
\newcommand{\Q}{\mathbb{Q}}
\newcommand{\HFb}{\mathbb{HF}}
\newcommand{\Fc}{\mathcal{F}}
\newcommand{\Qc}{\mathcal{Q}}
\DeclareMathOperator{\gr}{gr}
\DeclareMathOperator{\id}{id}
\newcommand{\lrangle}[1]{\langle #1 \rangle}
\DeclareMathOperator{\Span}{Span}
\DeclareMathOperator{\colim}{colim}
\DeclareMathOperator{\hocolim}{hocolim}
\DeclareMathOperator{\sSet}{sSet}
\DeclareMathOperator{\coeq}{coeq}
\DeclareMathOperator{\eq}{eq}
\DeclareMathOperator{\Filt}{Filt}
\DeclareMathOperator{\Top}{Top}
\DeclareMathOperator{\Link}{Link}
\DeclareMathOperator{\GenAlg}{GenAlg}
\DeclareMathOperator{\GenAlgKnot}{GenAlgKnot}
\newcommand{\Ii}{J}
\newcommand{\Jj}{J_{v_0}}
\newcommand{\nat}{\natural}
\newcommand{\shrpt}[1]{\left( #1 \right)_{\sharp}}
\newcommand{\Orb}[1]{\{ [ #1 ] + iv \}_{i \in \Z}}
\newcommand{\Orbe}[1]{\{ [ #1 ] + ie \}_{i \in \Z}}
\newcommand{\OrbI}[1]{\{ [ #1 ] + \sum_{v\in I}i_vv\}_{i_v\in\Z}}
\newcommand{\OSS}{Osv\'ath, Stipsicz, and Szab\'o}
\newtheorem{theorem}{Theorem}[section]
\newtheorem{lemma}[theorem]{Lemma}
\newtheorem{proposition}[theorem]{Proposition}
\theoremstyle{definition}
\newtheorem{definition}[theorem]{Definition}
\theoremstyle{remark}
\newtheorem{remark}[theorem]{Remark}
\newtheorem{remarkn}[theorem]{Remark}
\newtheorem{example}[theorem]{Example}
\title{Invariance and Naturality of Knot Lattice Homology and Homotopy}
\author{Seppo Niemi-Colvin}
\date{April 2022}
\begin{document}

\begin{abstract}
Links of singularity and generalized algebraic links are ways of constructing three-manifolds and smooth links inside them from potentially singular complex algebraic surfaces and complex curves inside them.
We prove that knot lattice homology is an invariant of the smooth knot type of a generalized algebraic knot in a rational homology sphere.
In that case, knot lattice homology can be realized as the cellular homology of a doubly-filtered homotopy type, which is itself invariant.
Along the way, we show that the topological link type of a generalized algebraic link determines the topology of the minimal plumbing resolution for the nested singularity type used to create it.
Knot lattice homotopy is a natural invariant in that diffeomorphisms of the knot that play suitably well with the minimal good resolution will provide a contractible space of morphisms between the doubly-filtered knot lattice spaces associated to any presentation.
\end{abstract}
\maketitle
\tableofcontents

\section{Introduction}
One of the first questions a topologist usually asks of a calculation is if it is invariant, especially if it relies on a presentation of the underlying topological structure, such as a three-manifold or knot. 
While there are ways to create invariants out of non-invariant calculations, such as minimizing over all presentations \added{or using a minimal presentation}, knowing that a calculation is invariant provides hope that the calculations are getting at genuine topological information about the structure, instead of simply being artifacts of the presentation.
\added{Additionally, a minimal presentation may not express all relationships between the invariants, as for example a more complicated presentation may be needed for cobordism morphisms.}

We focus on the class of three-manifolds that are links of normal complex surface singularities and a class of knots that generalizes algebraic knots to this setting.
The calculations of concern for this article are knot lattice homology, which is built off of the foundations of lattice homology, and knot lattice homotopy which realizes knot lattice homology as the cellular homology of a doubly-filtered homotopy type.
We also include some maps as a part of the package we would like preserved, based on the context of Heegaard Floer homology and knot Floer homology.
The desired invariance is summarized in the following theorems:

\begin{theorem}\label{invarianceResult}
Let \((Y,K)\) be a generalized algebraic knot in a rational homology sphere with a \Spinc structure \(\tf\). The chain homotopy equivalence type of
\[(\CFKb(G_{v_0},\tf),\Jj,\Ii)\]
depends only \((Y,K)\) and \(\tf\) and not the particular graph with unweighted vertex \(G_{v_0}\) used to represent it.
\end{theorem}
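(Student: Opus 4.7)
The plan is to reduce the invariance statement to the verification that $(\CFKb(G_{v_0},\tf),\Jj,\Ii)$ transforms in a controlled way under a finite set of elementary moves on the plumbing graph with unweighted vertex, and then to show that each such move induces a filtered chain homotopy equivalence. First I would invoke the structural result highlighted in the abstract — that the topological link type of a generalized algebraic link determines its nested singularity type — together with a Neumann-style calculus adapted to graphs with a distinguished unweighted vertex. This should guarantee that any two graphs $G_{v_0}$ and $G'_{v_0}$ representing the same $(Y,K,\tf)$ are connected by a finite sequence of blow-ups, blow-downs, and their variants that allow $v_0$ to be blown up or remain adjacent to the exceptional vertex. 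Thus invariance reduces to a move-by-move argument.

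For each elementary move I would construct an explicit chain map between the two knot lattice complexes and verify, by a direct lattice-theoretic computation, that it is a chain homotopy equivalence respecting both $\Jj$ and $\Ii$. The moves supported in the weighted part of the graph, at a $(-1)$-vertex not adjacent to $v_0$, should reduce to the classical lattice homology invariance argument after observing that the splitting of the lattice respects the two filtrations in a trivial way. The work is in the moves at or adjacent to $v_0$: here blow-down changes the ambient lattice $\mathbb{Z}^V$, alters the $\Spinc$-identifications, and shifts the special role of $v_0$ within the cube structure.

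I expect the main obstacle to be these \emph{knotted} blow-down moves, since one has to produce a chain homotopy equivalence that simultaneously intertwines the $\Ii$ filtration, coming from the full lattice, and the $\Jj$ filtration, coming from the unweighted vertex. My approach is to define the map on generators by an explicit projection/lifting between the two cubical complexes, dictated by the characteristic-vector identification for the reduction of $\Spinc$ structures; to check it is a chain map using the combinatorics of the cube differentials; and then to construct a homotopy inverse together with a contracting homotopy that pushes cubes across the collapsed edge, paralleling N\'emethi's proof of invariance for unknotted lattice homology but now tracking the second filtration at each step. Verifying that these homotopies themselves are filtered, rather than merely being chain maps, is where the careful bookkeeping lies.

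Once each move is handled, the composition of the resulting filtered chain homotopy equivalences is well-defined up to chain homotopy, and is independent of the chosen sequence of moves by the standard argument that any two such sequences can be interpolated within the calculus. This yields a canonical filtered chain homotopy equivalence class depending only on $(Y,K,\tf)$, proving the theorem.
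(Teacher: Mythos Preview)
Your overall strategy --- reduce to blow-ups/blow-downs via the nested-singularity determination result, then handle each move --- matches the paper. But your assessment of where the difficulty lies is inverted, and this reflects a genuine gap.

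You expect that moves supported away from $v_0$ ``reduce to the classical lattice homology invariance argument after observing that the splitting of the lattice respects the two filtrations in a trivial way.'' This is false. The paper makes this point explicitly (Remark~\ref{knotNeededRem}): the chain homotopy used to collapse the exceptional direction must be \emph{tuned to $v_0$}, meaning the choice of maxima in each $e$-orbit must consult $h_V$ when $h_U$ alone leaves an ambiguity. A contraction chosen independently of $v_0$ --- as in the N\'emethi argument you propose to parallel --- will in general fail to be filtered with respect to $h_V$; the paper gives an explicit two-vertex example where different choices of $v_0$ force opposite choices of maxima. Conversely, the moves you flag as hard (those at or adjacent to $v_0$) were essentially already handled by Ozsv\'ath--Stipsicz--Szab\'o; the genuinely new case in the paper is the blow-up of an edge between two \emph{weighted} vertices, far from $v_0$.

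The paper's mechanism for handling this is what your proposal is missing: because the projection $P$ commutes with the action of $v_0$, one reduces the double-filtration check to a single-filtration check (Lemma~\ref{PsingleFilt}), and then Lemma~\ref{equivTechnique} packages the homotopy inverse as $R = C_e^{v_0}\circ\tilde\phi$ where $C_e^{v_0}$ is a contraction tuned to $v_0$ and $\tilde\phi$ is any $h_U$-filtered section. The existence of tuned contractions (Proposition~\ref{tuningFiltration}, Corollary~\ref{existsTunedContractions}) is what absorbs the $v_0$-dependence and makes the argument go through. Without this device, your ``contracting homotopy that pushes cubes across the collapsed edge \ldots\ tracking the second filtration at each step'' has no reason to exist.
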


\begin{theorem}\label{topInvarianceResult}
Let \((Y,K)\) be a generalized algebraic knot in a rational homology sphere with a \Spinc structure \(\tf\). The doubly filtered homotopy type of
\[(\CFKb^{\nat}(G_{v_0},\tf),\Jj^{\nat},\Ii^{\nat})\]
depends only on \((Y,K)\) and \(\tf\) and not the particular graph with unweighted vertex \(G_{v_0}\) used to represent it.
\end{theorem}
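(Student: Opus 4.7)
The plan is to parallel the proof of Theorem~\ref{invarianceResult} but carry out the argument at the space level instead of the chain level. As suggested by the abstract, any two graphs $G_{v_0}$ and $G_{v_0}'$ representing the same pair $(Y,K)$ are connected by a finite sequence of moves in a Neumann-style calculus adapted to the unweighted vertex; this reduces the theorem to the following step-by-step claim: for each such move, produce a doubly filtered homotopy equivalence $\CFKb^{\nat}(G_{v_0},\tf)\simeq \CFKb^{\nat}(G_{v_0}',\tf)$ intertwining the filtrations $(\Jj^{\nat},\Ii^{\nat})$ with $((\Jj')^{\nat},(\Ii')^{\nat})$, and whose effect on cellular chains recovers the chain homotopy equivalence already built for Theorem~\ref{invarianceResult}.

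First I would fix a uniform cellular description of $\CFKb^{\nat}(G_{v_0},\tf)$, with cells indexed by the combinatorial data of the graph (chains of characteristic covectors in a fixed $\Spinc$ class) and with $\Jj^{\nat}$ and $\Ii^{\nat}$ recorded cell-by-cell via explicit $\R$-valued functions that recover $\Jj$ and $\Ii$ after taking cellular chains. Then I would enumerate the moves in the calculus (for instance, blow-up and blow-down of $(-1)$-vertices away from $v_0$, absorption moves, edge contractions, and the moves which reposition $v_0$ or alter its incidence) and construct the filtered equivalence case by case. For the bulk of the moves, one side embeds as a subcomplex of the other, and I would exhibit a strong deformation retraction of the ambient complex onto the subcomplex that does not increase either filtration; for the remaining moves I would zigzag through a common intermediate space, such as a filtered mapping cylinder carrying compatible filtrations on its two ends.

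The main obstacle will be the moves that interact directly with the unweighted vertex $v_0$, because $\Jj^{\nat}$ is exactly the filtration recording the knot's position and it transforms nontrivially under such moves. The algebraic content --- what happens to $\Jj$ at the level of chain complexes --- is already addressed by Theorem~\ref{invarianceResult}; the new ingredient here is the geometric requirement that these chain homotopy equivalences be realized by honest filtered cellular maps and homotopies between CW complexes. I expect the combinatorial design of the $\nat$ construction to make this promotion essentially mechanical: the chain homotopies produced in the proof of Theorem~\ref{invarianceResult} should be realizable as explicit collapses and expansions of cells whose $(\Jj^{\nat},\Ii^{\nat})$-values can be read off directly from the construction, after which concatenating the filtered equivalences along the sequence of moves yields Theorem~\ref{topInvarianceResult}.
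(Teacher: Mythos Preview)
Your high-level strategy---reduce to a finite list of moves, then realize each chain equivalence from Theorem~\ref{invarianceResult} by a doubly filtered continuous map---is exactly what the paper does. But several of the specifics you have written down would lead you astray if you tried to execute them.

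First, the move list. By Theorem~\ref{BlowUpsSuffice}, the only moves needed are blow-ups and blow-downs: generic blow-ups, blow-ups of a weighted vertex, blow-ups of the unweighted vertex, and blow-ups of edges (either between two weighted vertices or incident to $v_0$). There are no absorption moves, edge contractions, or repositioning moves in the calculus; part of the content of Section~\ref{BlowUpsSufficeSec} is precisely that blow-ups and blow-downs suffice. So your enumeration is too broad.

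Second, you locate the difficulty in the moves touching $v_0$. In fact the paper disposes of those cases quickly using prior work of Ozsv\'ath--Stipsicz--Szab\'o together with the framework of Lemmas~\ref{PsingleFilt} and~\ref{equivTechnique}; the case that requires genuinely new work is the blow-up of an edge between two \emph{weighted} vertices (Subsection~\ref{edgeBlowUpSec}). The reason this is subtle is not that $\Jj^{\nat}$ moves, but that the contraction homotopy collapsing the new $e$-direction must be doubly filtered, and this depends on the knot even though the move occurs away from $v_0$ (see Remark~\ref{knotNeededRem}).

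Third, and most importantly, the promotion from chain homotopy to continuous homotopy is not mechanical. The paper's mechanism is a specific one: the blow-down induces a cellular projection $P^{\nat}$ which already commutes with $\Jj^{\nat}$ and $\Ii^{\nat}$; the inverse $R^{\nat}$ is built as $C_e^{\nat}\circ\tilde\phi^{\nat}$, where $C_e^{\nat}$ is the topological realization of a \emph{contraction tuned to $v_0$}. The existence and continuous realizability of such contractions is the technical heart of Section~\ref{ContractionSec} (Propositions~\ref{tuningFiltration} and~\ref{RealizationProp}, Theorem~\ref{CvRealized}), and it is where the dependence on the knot enters. Your proposal to use ``deformation retractions onto a subcomplex'' or ``filtered mapping cylinders'' does not, by itself, supply this; you would still have to discover the tuned-contraction idea to make the retraction respect $h_V$ as well as $h_U$.
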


A paper claiming the first result without the maps \(\Jj\) and \(\Ii\) has appeared independently on the arXiv in a paper by Matthew Jackson \cite{otherInvariance}\added{, though there was an error in their proof} (see Remark \ref{knotNeededRem}).

\added{We go further by proving a naturality statement that not only are the invariants well defined, but the morphisms realizing those invariants are well defined up to a contractible choice.
Additionally, one would suspect that diffeomorphisms of \((Y,K)\) sufficiently compatible with the structure used to define knot lattice would provide weak equivalences of the knot lattice spaces, and indeed we get such maps, well defined up to a contractible choice.
This is described below, where \(C_{\GenAlgKnot}^{\Spin^c}\) is a category defined in Section \ref{subsubsecgenalglinks} which captures which diffeomorphisms are compatible with resolutions of the relevant singularities.
Note that while this paper does not get into the details on \(\infty\)-categorical interpretations, in that setting the functor from \(\tilde{C}_{\GenAlgKnot}^{\Spin^c}\) to \(C_{\GenAlgKnot}^{\Spin^c}\) is an equivalence, ensuring an \(\infty\)-categorical functor from \(C_{\GenAlgKnot}^{\Spin^c}\) itself.
}

\added{
\begin{theorem}\label{thm:natResult}
There exists a topologically enriched category \(\tilde{C}_{\GenAlgKnot}^{\Spin^c}\) which 
\begin{enumerate}
\item has objects \((\pi,\tf)\) where \(\pi\colon \tilde{S}\to S\) is a resolution of a weak nested singularities \((S,C,\{p_1,\ldots, p_n\})\) with link a generalized algebraic knot in a rational homology three-sphere \((Y,K)\), and \(\tf \in \SpincX{Y}\).
\item has morphism spaces \(\tilde{C}_{\GenAlgKnot}^{\Spin^c}((\pi_1,\tf_1),(\pi_2,\tf_2))\) have a contractible component for each morphism in \(C_{\GenAlgKnot}^{\Spin^c}((Y_1,K_1,\tf_1),(Y_2,K_2,\tf_2))\) between the generalized algebraic knots.
\end{enumerate}
and there is a functor
\[\CFKb^{\nat}\colon \tilde{C}_{\GenAlgKnot}^{\Spin^c}\to \Filt_{[\Q\times\Q\colon 2\Z\times2\Z]}\]
which takes a resolution \((\pi,\tf)\) to \(\CFKb^{\nat}(G_{v_0},\tf)\) where \(G_{v_0}\) is the dual graph for \(\pi\) and on minimal resolutions agrees with the functor \(F_{min}\).
\end{theorem}
}

\added{
This is both stronger and weaker than the current naturality statements for Heegaard Floer and knot Floer \cite{naturalityHF, naturalityHFProj}.
It is weaker in that there are extra requirements on the three-manifolds, knots, diffeomorphisms, and the isotopies between them, but given such requirements we produce higher order homotopies between filtered spaces.
In addition to the addition of a space level analysis, this also implies results when the chain complex is considered with \(\Z[U]\) coefficients in not just a projective manner. We have not yet verifie contractible choices of maps between the filtered spaces associated with analytic lattice homology \cite{anLattice}.ed that both naturality statements agree where defined.
}
\added{
Similar theorems should also hold for all of N\'emethi's suite of lattice style invariants, as the proofs of invariance for the invariants in the suite follow the same general strategy.
For example replacing \(C_{\GenAlgKnot}^{\Spin^c}\) with a similarly defined \(C_{\Link}^{\Spin^c}\) that no longer keeps track of the knot, one gets a similar functor for the filtered lattice space of the underlying three-manifold.
Isomorphisms of normal complex surface singularities should also induc
}

Despite being able to recover the algebraic invariant from the homotopy theoretic one, this paper tackles both the algebraic invariants and the homotopy invariants for several reasons.
First, while geometric intuition aids in the discussion, dealing with the algebraic invariants first without worry for continuity then using these maps as a guide for constructing the continuous homotopies is easier.
Second, this assuages potential worries that the homotopy theoretic argument only provides a quasi-isomorphism of chain complexes rather than a chain homotopy equivalence due to the knot lattice complex being associated to cellular rather than singular homology.
We will use the \(\nat\) symbol to signify that we are dealing with the topological realization of an algebraic invariant.

Andr\'as N\'emethi defined lattice homology for links of normal complex surface singularities \cite{latticeCohomNormSurf} based on attempts by N\'emethi, Ozsv\'ath, and Szab\'o to compute Heegaard Floer homology when the three-manifold bounds a negative definite tree-like plumbing of spheres \cite{NemAR,PlumbingsPape}.
Lattice homology was conjectured to be the same as Heegaard Floer homology for rational homology spheres \cite{latticeCohomNormSurf}, and after it was shown to coincide in some particular cases \cite{NemAR,knotsAndLattice}, Ian Zemke proved their equivalence for rational homology spheres \cite{ZemkeLattice}.
The lattice homology of rational homology spheres also shares some formal properties with Heegaard Floer homology such as: an exact triangle (\cite{holoDisksTopological} for Heegaard Floer homology, \cite{surgeryTriangle,twoExactSequences} for lattice homology) and the existence of an involutive structure (\cite{InvolSetup} for Heegaard Floer homology, \cite{involCheck} for lattice homology).
Lattice homology was particularly useful for the creation of an algorithm for computing the Heegaard Floer homology of a Brieskorn sphere \(\Sigma(p,q,r)\) purely in terms of $p,q,$ and $r$ \cite{brieskornAlgorithm}.

Ozsv\'{a}th, Stipsicz, and Szab\'{o} developed knot lattice homology as a way to make use of the surgery formula from knot Floer homology in the lattice homology setting \cite{knotsAndLattice}.
This construction parallels a lot of the structures in knot Floer homology, particularly in that it can be viewed as the addition of a filtration to the lattice complex for the ambient three-manifold.
Ozsv\'{a}th, Stipsicz, and Szab\'{o} showed that when the ambient three-manifold is a lattice homology L-space, knot lattice homology and knot Floer homolgoy agree \cite{knotLatticeLspace}.
Alfieri used the Alexander filtration from knot lattice homology to deform lattice homology, and showed that this deformation corresponded to the analagous deformation of Heegaard Floer homology for graphs with at most one bad vertex (i.e. a vertex whose degree is greater than the negative of its weight) \cite{upsilonGenAlg}.
Alfieri used this to provide a method for computing the upsilon invariant \cite{upsilonGenAlg}.
\added{Borodzik, Liu, and Zemke have then extended Zemke's argument for lattice homology to show that a link lattice complex generalizing the construction of the knot lattice complex is equivalent with link Floer homology \cite{linkLattice}, though they did not work on the level of filtered spaces.}

Furthermore, finding homotopy-theoretic versions of Floer-theoretic invariants is an active area of research, including work by Kronheimer, Manolescu, and Sarkar for Seiberg-Witten Floer theory \cite{SWFhomotopy, PeriodicFloerSpectra} and knot Floer homology \cite{gridHomotopy}. 
Theorem \ref{topInvarianceResult} fits in with this wider research program.
The work of Manolescu and Sarkar \cite{gridHomotopy} in on creating a homotopy theoretic version of knot Floer homology expresses the action of \(UV\) in knot Floer homology differently than we do for knot lattice homology, so the two theories are not at this point immediately comparable \added{though one would hope that after application of a functor on the knot lattice space they become equivalent.
Invariance is not yet known for the homotopy theoretic version of knot Floer homology.}

Section \ref{SingBackSec} covers the relevant singularity theory with an eye towards low-dimensional topology.
In particular Subsection \ref{linksSubsec} goes over links of singularity in various contexts, culminating in introducing weak generalized algebraic links, which combine aspects of links of normal complex surface singularities and algebraic links in \(S^3\).
In each of these contexts, we define what the link of a singularity means, as well as how to produce a plumbing diagram by resolving the singularity in question.
We introduce the moves of blow-ups and blow-downs needed to get between plumbing resolutions of the same singularity, and \added{in each setting we discuss the category of resolutions, which gives rise to a minimal resolution as its terminal object.}
This builds up to Subsection \ref{BlowUpsSufficeSec}, where we show that we only need to check invariance under blow-ups and blow-downs of the graph representing the weak generalized algebraic knot.
This leads to the following theorem of potentially independent interest, a variation of which was independently proved by Jackson in \cite{otherInvariance}.

\begin{theorem}\label{BlowUpsSuffice}
The link type of a \added{weak} generalized algebraic link \((Y,L)\) determines the topology of the minimal good resolution for the singularity \((S,C,\{p_1,\ldots, p_n\})\) used to create it.
\added{In particular, if there exists homeomorphism \(f\colon (Y_1,L_2)\to (Y_2,L_2)\) between weak generalized algebraic links, then there exists a diffeomorphism which extends over the minimal good resolutions of the nested singularities and respects the plumbing structures with fibers.} 
\end{theorem}

Before introducing either lattice homology or knot lattice homology, in Section \ref{FloerSec} \added{we discuss the target categories we are aiming for and our conditions for equivalence, including the relevant perspective on doubly-filtered spaces along with the conditions on the maps \(\Jj\) and \(\Ii\).}
The most important pieces of information from this section are the definitions of a complete knot package for a knot \((Y,K)\) and a complete homotopy package for a knot \((Y,K)\), which respectively define the equivalence types for knot lattice homology and knot lattice homotopy in this setting.
\added{We  also establish several functors on doubly-filtered spaces.
These functors and the conditions on \(\Jj\) and \(\Ii\) mirror constructions and conditions in involutive knot Floer homology.}

We then cover the constructions for lattice homology (in Section \ref{introToLatticeHom}) and knot lattice homology  (in Section \ref{knotLatticeBackSec}).
Lattice homology provides a cube decomposition of Euclidean space with a height function on it, and we use this cube decomposition to construct our chain complex over \(\Z[U]\) as well as a filtered topological space.
Knot lattice homology adds a second height function to the chain complex and a second filtration to the topological space.
Both lattice homology and knot lattice homology come equipped with involutive maps, respectively denoted as \(\Ii\) and \(\Jj\), which can be realized as continuous maps.
\added{Note that this presentation of the knot lattice chain complex differs from that of \OSS's packaging in that they track the Alexander grading and our second height function is most analagous to the second maslov grading of knot Floer homology, and this formulation does not require passing to a larger graph in order to define.
We provide a proof that these two presentations agree, and we also extend formulas of \OSS{} for chain complexes to filtred spaces.
The height functions here also play a similar role to N\'emethi's weight functions, but in a formulation more natural to the Heegaard Floer context.}

In Section \ref{ContractionSec} we develop the tools to construct the desired spaces of morphisms.
First, in Section \ref{subsec:hocolim} we review the construction of homotopy colimits and how they relate to homotopy coherent natural transformations, including a condition on a diagram where one knows the space of homotopy coherent natural transformations from any other diagram is either empty or contractible.
Then, in Section \ref{subsec:reducVertex} we provide a description of the knot lattice space as a homotopy colimit based on a subset of vertices of the describing graph, and in the case where the subset in question is a singleton, show that the resulting diagram satisfies the condition of the previous section.
Finally, in Section \ref{subsec:Compare}, we compare this approach to the approaches of Jackson, N\'emethi, and \OSS.
In particular, we show that this approach is compatible with the quasi-fibrations and contractions used by N\'emethi and \OSS, while allowing for stronger control over the resulting space of morphisms.
We also provide some tools for understanding what the resulting morphisms can look like and the role the knot itself plays in that process.


In section \ref{sec:InvarNat} we finally give the proofs for invaraince and naturality.
Section \ref{subsec:BlowUps} focuses on invariance , due to Theorem \ref{BlowUpsSuffice} specifically looks at the case where one resolution is a blow up of other resolution, allowing for us to conclude Theorem \ref{topInvarianceResult}.
Section \ref{subsec:Naturality} then shows how the morphisms from Section \ref{subsec:BlowUps} can be arranged into the broader functor of Theorem \ref{thm:natResult}.

\subsection{Avenues for Future Work}

While a lack of an invariance proof has not prevented researchers such as Ozsv\'ath, Szab\'o, Stipsicz, and Alfieri from working with knot lattice homology \cite{knotLatticeLspace,upsilonGenAlg}, and Borodzik Liu, and Zemke have proved invariance for link lattice by showing it equivalent to link Floer, the proof of invariance provided here provides justification for further work on the knot lattice homotopy type.

Since lattice homology provided a useful tool in finding algorithms for computing the Heegaard Floer homology of Brieskorn spheres \cite{brieskornAlgorithm}, one may suspect that knot lattice homology could provide a useful tool for the computation of knot Floer homology and related invariants for the regular and singular fibers of Brieskorn spheres.
There may be benefits to performing these calculations in homotopy theoretic terms.
More generally, computation of the underlying homotopy type is needed in more cases to see if it contains more information than the chain complex.

Finally, N\'emethi defined lattice cohomology in the context of general links of normal complex surface singularities not just rational homology spheres and framed the behavior of lattice homology in the terms of singularity theory \cite{latticeCohomNormSurf}.
Extending the proof of knot lattice homology's invariance to this larger setting and connecting it with the properties of nested singularities would be interesting.

\section*{Acknowledgements}
I would like to acknowledge Andr\'{a}s Stipsicz for his correspondence including verifying that knot lattice homology was not yet known to be invariant when I started.
Furthermore, I would like to thank Barbara Fantechi and Twitter user \verb|@d_m_d_m_d_d| for helping me understand what it means for a complex variety to be normal.
I would like to thank Irving Dai and Antonio Alfieri for useful conversations at the Nearly Carbon Neutral Geometric Topology Conference, and I thank Sean Fiscus for useful conversations while in Budapest.
Andr\'{a}s N\'emethi deserves thanks for pointing me towards his updated proof of invariance for the latice spaces. 
Finally, I would like to thank my advisor Adam Levine for lots of advice during the editing process.
The author was partially supported by NSF grant DMS-1806437.

\section{Links of Singularity and Generalized Algebraic Links}\label{SingBackSec}

Singularity theory provides important context for the construction of knot lattice homology, in particular, by providing the necessary scope where it is defined, i.e. generalized algebraic knots in rational homology spheres, and the needed presentation of said knots in order to calculate knot lattice homology, i.e. resolutions of nested singularities.
Furthermore Subsection \ref{BlowUpsSufficeSec} provides a proof of Theorem \ref{BlowUpsSuffice}, which is needed to show invariance.
Since a number of people approaching knot lattice homology may be doing so from a low-dimensional topology rather than singularity theory background, we will not be assuming much familiarity with algebraic geometry.

\subsection{Types of Links of Singularity and Their Resolutions}\label{linksSubsec}
Before defining a link of a singularity in any of the relevant contexts, we will need the following definition.

\begin{definition}
Given a topological space \(X\) and a subspace \(Y\), we have that a \emph{regular neighborhood \(N\) of \(Y\)} is a closed neighborhood of \(Y\) in \(X\) so that there exists a map \(\pi: \bou N \to X\) where \(N\) is homeomorphic to the mapping cylinder of \(\pi\).
\end{definition}
Regular neighborhoods generalize the idea of tubular neighborhoods to the singular setting, and in the case where \(X\) is a point, we have that regular neighborhoods are cone-like.

Generalized algebraic knots combine aspects of both links of normal complex surface singularities and algebraic links, and \added{we will first cover the relevant material in the context of links of normal complex surface singularities before adding the additional structure of a complex curve to give a knot.}
In each context, we will cover what the relevant link of singularity is and what is required of a resolution of a singularity.
Relevant to the context of how a generalized algebraic knot is presented for knot lattice homology, we cover how to represent said resolutions using plumbing graphs.
In preparation for the proof of invariance, we discuss what moves are allowed on said plumbing graphs and \added{the role minimal resolutions play in each context.}

\subsubsection{Links of Normal Complex Surface Singularities}\label{subsubseclinkssurf}


\begin{definition}
Given a complex variety \(V\), viewed as an algebraic subset of \(\C^n\) equipped with the euclidean topology, which has an isolated singularity at \(p\), we define its \emph{link} as the boundary of a regular neighborhood around \(p\) in \(V\).
A \emph{link of multiple singularities} \(p_1,\ldots,p_n\) is the boundary of a regular neighborhood of a path connecting \(p_1,\ldots, p_n\) and passing through no other singular points.
\end{definition}

\added{Note that the term link here refers to the three-manifold (as a link of the singularity) and not to an embedded disjoint union of circles, and as such there are concerns when taking the connect sum of those links taht do not appear here.}
A common way to form the link is using a particular embedding of \(V\) into affine space and taking the intersection of \(V\) with a small enough sphere \(S_{\epsilon}(p)\) around \(p\).
A complex affine space is \emph{normal} at its singular point \(p\) if the ring of functions localized at \(p\) is integrally clsoed in its ring of fractions.
Here we care about links of normal complex surface singularities, and the links of these singularities are oriented 3-manifolds.
Figure \ref{subfig:linkofsing} shows a dimensionally reduced picture of a link of a normal complex surface singularity.
If \(V\) is normal, then the link around any of its points must be connected.

The motivation for the definition of a link of multiple singularities is that if a sphere instead contains multiple isolated singularities \(p_1,\ldots, p_n\), and no other topology, then the result is the connected sum of the links of individual \(p_i\).
Viewing the sphere as the boundary of a cone-like neighborhood, expanding those neighborhoods until they merge gives the cobordism connecting the disjoint union and the connected sum.
Extending the cores of the 1-handles connecting the individual neighborhoods to the singularities provides the path used in constructing the regular neighborhood.
This argument also gives why the precise path between the points does not matter; a different choice of path would result in handle slides for the 1-handles and connected sum being applied in a potentially different order.
If one would prefer, one can have the link of multiple singularities be the disjoint union of the links of the individual singularities, but it is traditional in low-dimensional topology to restrict to connected manifolds with the operation of choice being connect sum.
Furthermore, lattice homology is defined in the context of connect sums of links of singularity, so we chose a definition that includes those in its scope.

\begin{figure}
    \centering
    \begin{subfigure}[t]{.3\textwidth}
        \includegraphics[width=\textwidth]{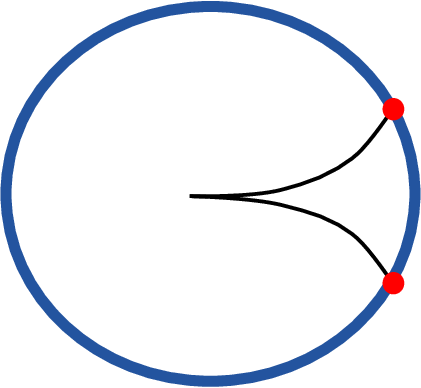}
        \caption{An algebraic knot (assumed in \(S^3\).}
				\label{subfig:algknot}
    \end{subfigure}
    \hfill
    \begin{subfigure}[t]{.3\textwidth}
        \includegraphics[width=\textwidth]{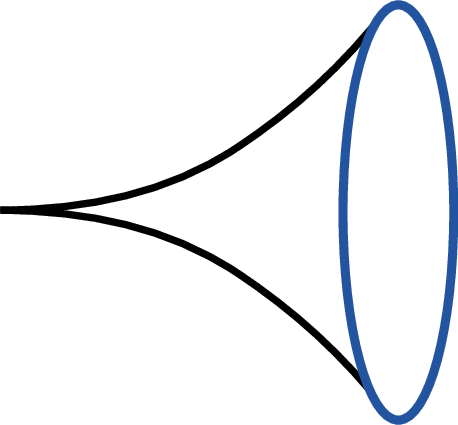}
        \caption{A link of a normal complex surface singularity}
        \label{subfig:linkofsing}
    \end{subfigure}
    \hfill
    \begin{subfigure}[t]{.3\textwidth}
    \includegraphics[width=\textwidth]{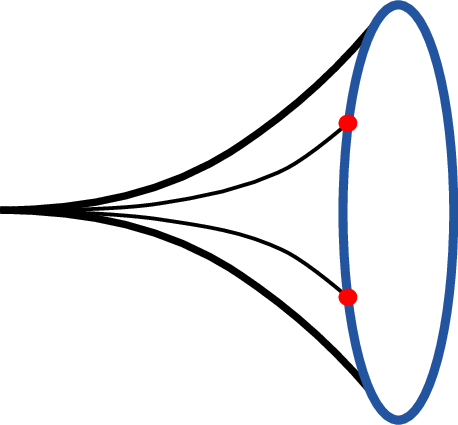}
    \caption{A generalized algebraic knot}
    \label{subfig:genalgknot}
    \end{subfigure}
    \caption{Dimensionally reduced figures for an algebraic knot, a link of a normal complex surface singularity, and a generalized algebraic knot.}
    \label{fig:algDefOfLinks}
\end{figure}

\begin{figure}
    \centering
    \begin{subfigure}[t]{.4\textwidth}
        \includegraphics[width=\textwidth]{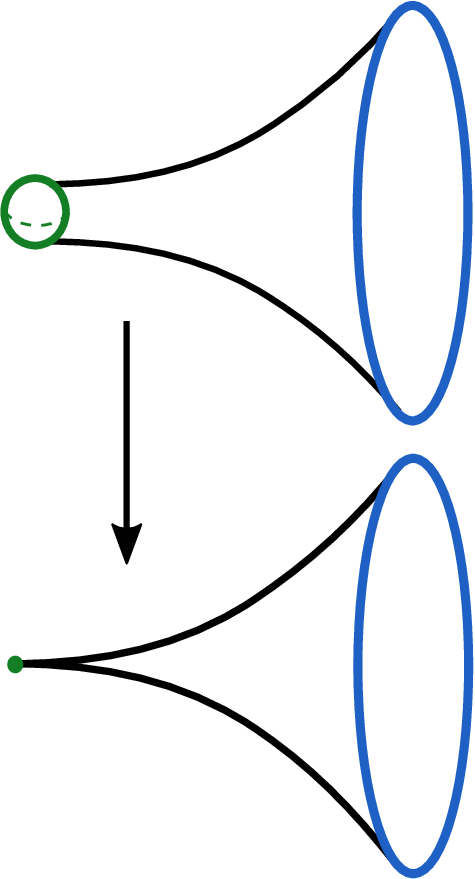}
        \caption{Resolution of a normal complex surface singularity}
        \label{subfig:surfacesingres}
    \end{subfigure}
    \hfill
    \begin{subfigure}[t]{.4\textwidth}
        \includegraphics[width=\textwidth]{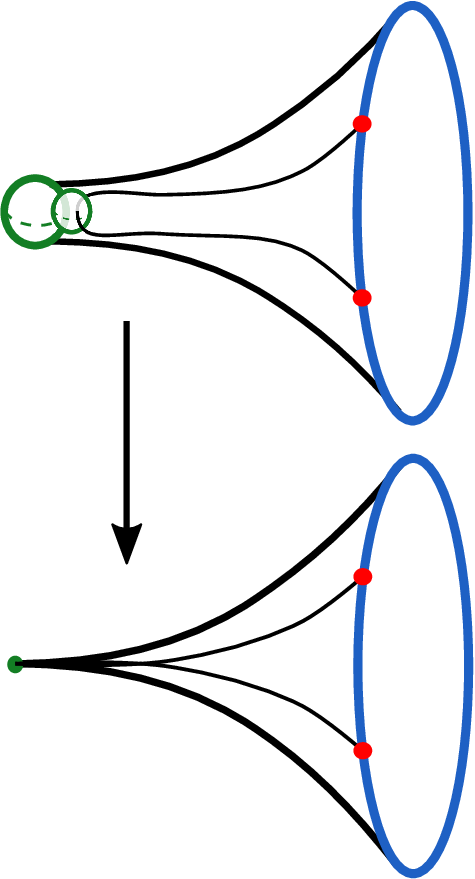}
        \caption{Resolution of a normal complex surface singularity plus a curve}
        \label{subfig:knotsingres}
    \end{subfigure}
    \caption{A resolution in the three manifold context and for a curve inside it. The extra sphere in Subfigure \ref{subfig:knotsingres} signifies that further blow-ups may be needed to resolve the curve. Furthermore, the curve pierces the sphere to emphasize that it should be transverse to the sphere, and in fact locally represented as a fiber of the plumbing.}
    \label{fig:res}
\end{figure}

\begin{definition}
A \emph{resolution of a singularity \((V,p)\)} is a map \(\pi:\tilde{V}\to V\) which is an isomorphism from \(\tilde{V}- \pi^{-1}\{p\}\) to \(V-\{p\}\) and where \(\tilde{V}\) is a smooth complex affine variety.
A \emph{resolution of multiple singularities \((V,\{p_1,\ldots, p_n\})\)} is defined similarly except the map \(\pi\) must only be an isomorphism from \(\tilde{V}- \pi^{-1}(\{p_1,\ldots p_n\})\)  to \(V- \{p_1,\ldots, p_n\}\).
For each \(p_i\) we have that \emph{\(\pi^{-1}(\{p_i\})\)} is an \emph{exceptional fiber} of the resolution.
\added{A resolution is \emph{good} if all the embedded singularities in the exceptional fibers are \emph{normal crossings}, i.e. can be analytically modeled on \(\{xy=0\}\) in \(\C^2\).
}
\end{definition}

Figure \ref{subfig:surfacesingres} depicts the resolution of a singularity.
Any normal surface singularity can be resolved \added{to a good resolution}, and in particular an algorithm is given in \cite{normalsurfacesing}.
Note that the process of resolving a singularity does not change its link.

\added{Associated to a good resolution is a \emph{dual graph \(\Delta\)}, which has vertices corresponding to the irreducible components of the exceptional fibers and edges corresponding to intersections between them (counting self intersections as loops). Vertices have integer weights encoding the euler number of their normal bundle.
One may also choose to keep track of the genus of the irreducible components using another number, often denote in brackets to distinguish it from the euler number or dropped entirely if it is zero.
This encodes the topological information contained in a neighborhood of the exceptional fibers, specifically the pullback of a regular neighborhood of the singularity to the resolution.
This neighborhood will be a smooth four-manifold and is an example of a more general construction called a plumbing and realizes the link of singularity as a graph three-manifold.
}
%

\begin{definition}
A \emph{plumbing of two disk bundles over surfaces} is the manifold achieved by gluing trivialized neighborhoods of points in such a way so that the base of one becomes identified with the fiber of the other.
In particular, the neighborhoods over which one is trivializing the disk bundles must be homeomorphic to disks themselves.
A general \emph{plumbing of (disk bundles over) surfaces} is a four-manifold constructed from set of disk bundles over surfaces with the plumbing operation done some number of times between them.
The \emph{base surfaces of the plumbing} are the images of the zero sections of the disk bundles used to make \added{the plumbing. These are included from the individual disk bundles into the plumbings.}
\added{A \emph{graph three-manifold} is a three manifold that is the fiberwise boundary of a plumbing, i.e. only the circle boundaries of the disk bundles are counted.}
\end{definition}

Plumbings can also be represented by undirected graphs called \emph{plumbing graphs}, which generalizes the notion of the dual graph to this setting.
It is possible for general plumbings to allow nonorientable bases (so long as the total space of the disk bundle is oriented) \added{denoted inside the brackets with negative the crosscap number, or even base surfaces with boundary components, written as a second number inside the brackets and no euler number.
In general, plumbing operations can be done two ways: preserving the orientations on fiber and base or reversing both.
This can be recorded individually as a \(+\) or \(-\) on the edges, according to the sign of the induced intersection between the base surfaces plumbed together; however, up to change of orientation on the base surfaces, the signs depend only on a choice of which cycles in the graph will have an odd number of \(-\) edges.}

The intersection form for the plumbing can be read directly off of the graph.
 \added{The orientable closed} base surfaces for the plumbing provide a basis for the second homology of the plumbing.
A base surface \(S_i\) has self intersection equal to the Euler number \(e_i\) of the corresponding disk bundle, and two different base surfaces \(S_i\) and \(S_j\) have intersection equal to the number of edges between them \added{counted with sign}. 

A plumbing graph that comes from a dual graph will have all of the base surfaces be closed and orientable and all edges will have a positive sign.
Furthermore, such a plumbing graph will represent a dual graph for the resolution of some singularity if and only if the intersection form is negative definite \cite{NegiDefi}.
The graphs representing resolutions of links of normal complex surface singularities can potentially have cycles or higher genus bases; however, the link is a rational homology sphere if and only if the plumbing graph is a forest of spheres.
We will be primarily dealing with dual graphs, using them to describe the link of singularity, though more general plumbings will come up again in Subsection \ref{BlowUpsSufficeSec}, where we prove Theorem \ref{BlowUpsSuffice}.


%

\begin{definition}
Given a point \(q\) of a smooth complex surface \(V\) so that in local coordinates \((x,y)\) we have that \(q\) is the origin, then \emph{the blow-up of \(V\) at \(q\)} is the complex surface \(V_q\) where the chart around \(q\) is replaced with
\[\{((x,y),[s:t]) \in \C^2\times \mathbb{CP}^1\, :\, xt=ys\}.\]
The projection \(\pi:V_q\to V\). induced by locally projecting onto \(\C^2\) is called the \emph{canonical projection}.
The curve \(\pi^{-1}(q)\) is called \emph{the exceptional sphere}.
The complex surface \(V\) is \emph{the blow-down} of \(V_q\) at the exceptional sphere.
\end{definition}
One can define a blow-up when \(V\) is singular at \(q\) as well, and this is a part of the process of constructing resolutions, though we will not be needing it in this paper.
Algebraically, blowing up a point \(q\) replaces \(q\) with the set of lines through that point, and assuming that we were already at a resolution, the space of lines is represented by a copy of \(\mathbb{CP}^1\), which will have a normal bundle of Euler number \(-1\).
Smoothly this operation produces a connect sum with \( \overline{\mathbb{CP}^2}\).
Given the presence of an embedded sphere with normal bundle of Euler number \(-1\) in \(V\), we can blow down that sphere, i.e. there exists a smooth complex surface \(\tilde{V}\) which is the blow-down of \(V\), a process which can be achieved holomorphically if the original sphere was holomorphic \cite{blowDown}.

Blowing up a plumbing resolution will give another plumbing resolution with a new vertex \(e\) that has weight \(-1\), representing the exceptional sphere.
To understand how blow-ups affect the graph, we will need the idea of the proper and total transform of a curve under a resolution.
\begin{definition}
Given a resolution \(\pi:\tilde{V}\to V\) of a complex surface \(V\) at a point \(q\) and a complex curve \(C\) inside \(V\), the \emph{total transform of \(C\)} inside the resolution is \(\pi^{-1}(C)\), while the \emph{proper transform of \(C\)} inside the resolution is the closure of \(\pi^{-1}(C-\{q\})\).
\end{definition}
In particular, blowing up provides a resolution of our plumbing, and the base surfaces for this new plumbing resolution will be precisely the proper transforms of the base surfaces of our original plumbing.

The effect of a blow-up on a plumbing graph depends on which point \(q\) was blown up.
If \(q\notin \pi^{-1}\left(\{p_1,p_2,\ldots,p_n\}\right)\), then the blow-up would be represented by the disjoint union of the previous graph with the vertex \(e\).
This is called a \emph{generic blow-up}.
If \(q\) is on a single one of the base surfaces \(v\) in \(\pi^{-1}\left(\{p_1,p_2,\ldots,p_n\}\right)\), we say that we are \emph{blowing up a vertex}.
In the resulting graph, \(e\) has an edge with \(v\) and the weight of \(v\) is lowered by 1.
Finally \(q\) could potentially be at the intersection of two base surfaces \(v_1\) and \(v_2\), in which case we say that we are \emph{blowing up an edge}.
In that case, we connect \(e\) to both \(v_1\) and \(v_2\) then delete the old edge connecting them.
The weights on both \(v_1\) and \(v_2\) lower by 1.
Figure \ref{fig:trefblowup} covers the case of blowing up a resolution of a nested singularity, but Figures \ref{subfig:genblowup}, \ref{subfig:vertexblowup}, \ref{subfig:edgeblowup} provide examples for the blow-ups discussed here.


\begin{figure}
    \centering
    \begin{subfigure}[t]{.4\textwidth}
        \includegraphics[width=\textwidth]{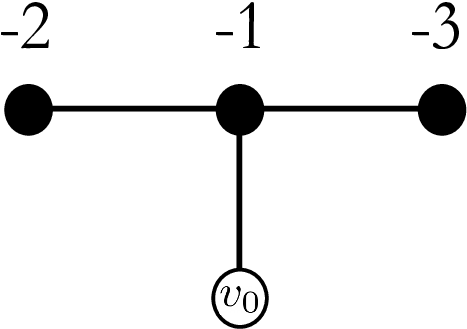}
        \caption{Original graph}
        \label{subfig:tref}
    \end{subfigure}
    \hfill
    \begin{subfigure}[t]{.4\textwidth}
        \includegraphics[width=\textwidth]{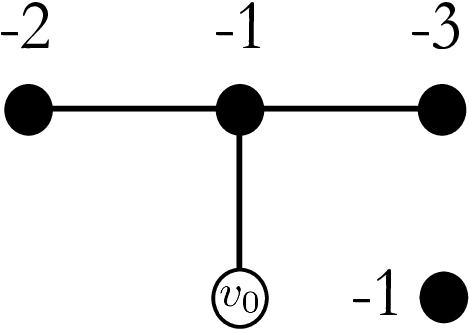}
        \caption{Generic blow-up}
        \label{subfig:genblowup}
    \end{subfigure}
    
    \begin{subfigure}[t]{.4\textwidth}
        \includegraphics[width=\textwidth]{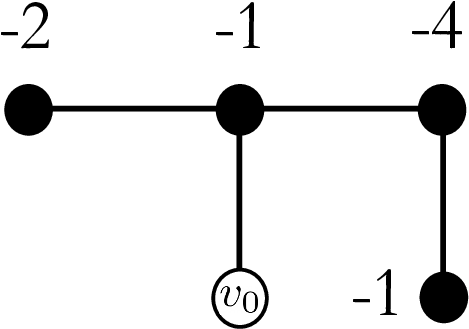}
        \caption{Blow-up of a (weighted) vertex}
        \label{subfig:vertexblowup}
    \end{subfigure}
		\hfill
		    \begin{subfigure}[t]{.4\textwidth}
        \includegraphics[width=\textwidth]{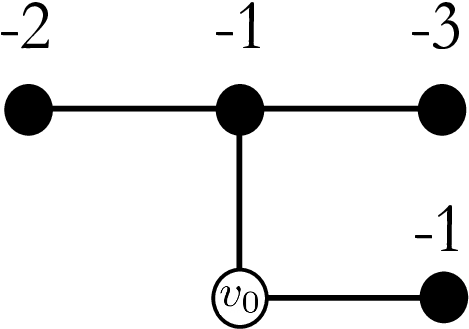}
        \caption{Blow-up of the unweighted vertex}
        \label{subfig:vertexblowupspec}
    \end{subfigure}
    
    \begin{subfigure}[t]{.4\textwidth}
        \includegraphics[width=\textwidth]{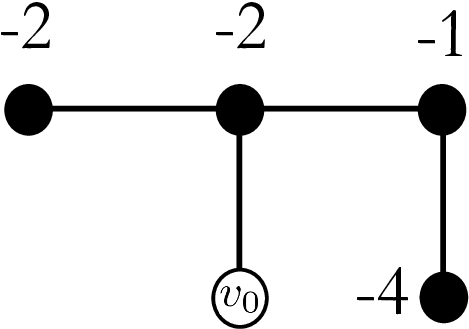}
        \caption{Blow-up of an edge (connecting two weighted vertices)}
        \label{subfig:edgeblowup}
    \end{subfigure}
    \hfill
    \begin{subfigure}[t]{.4\textwidth}
        \includegraphics[width=\textwidth]{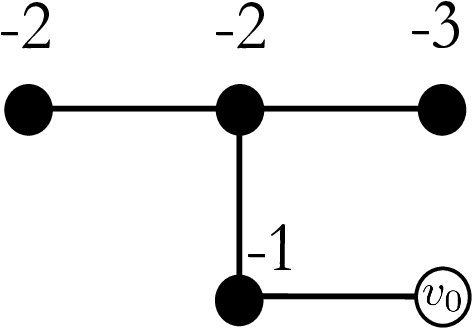}
        \caption{Blow-up of the edge connected to \(v_0\)}
        \label{subfig:edgeblowupspec}
    \end{subfigure}
    
    \caption{An example of a graph depicting a generalized algebraic knot \ref{subfig:tref}, along with the various blow-up moves that can be done to said graph that occur away from the curve defining the knot (\ref{subfig:genblowup},\ref{subfig:vertexblowup},\ref{subfig:edgeblowup}) and those that affect the curve defining the knot (\ref{subfig:vertexblowupspec} and \ref{subfig:edgeblowupspec}).}
    \label{fig:trefblowup}
\end{figure}

Not only will there exist a resolution of a given singularity, there will be an unique minimal good resolution.
\added{Before moving on, we will clarify what we mean by minimal.}
\begin{definition}
\added{
Let \((V,\{p_1,\ldots, p_n\})\) be a collection of singularities. We will define \emph{the category of good resolutions} for \((V,\{p_1,\ldots,p_n\})\) has objects good resolutions \(\pi\colon \tilde{V}\to V\) over the points \(\{p_1,\ldots, p_n\}\) and morphisms analytic maps \(\phi\colon \tilde{V_1}\to \tilde{V_2}\) that commute with the resolution maps to \(V\).
We will say a resolution is \emph{minimal} if any morphism from it is an isomorphism.
}
\end{definition}
\added{Note the category above is a poset.
In particular, away from \(\{p_1,\ldots, p_n\}\), \(\pi_1\) and \(\pi_2\) must be isomorphisms and in particular \(\phi= \pi_2^{-1}\circ \pi_1\) and \(\pi_i^{-1}(\{p_1,\ldots, p_n\})\) will be dense in \(\tilde{V_i}\).
When this map does exist it will present \(\tilde{V_1}\) as the resolution of \(\tilde{V_2}\) over some collection of points.
However note that, minimal resolutions are minimal poset elements with respect to the opposite of this poset, i.e. a map from \(\tilde{V_1}\) to \(\tilde{V_2}\) implies \(\tilde{V_1}\geq \tilde{V_2}\).
Furthermore, the proof of existence and uniqueness of a minimal resolution further states that being minimal is equivalent to having no sphere that can be blown down, and for any resolution the proof provides a minimal resolution that the original resolution maps to through successive blow downs.
As such these minimal resolutions are in fact terminal in the category of good resolutions and thus unique up to unique isomorphism.
See \cite{normalsurfacesing} for discussion of why minimal good resolutions exist.}


This argument highlights that is possible to get between any two resolutions of the same singularity through blow-ups and blow-downs.
Furthermore, \cite{calcForPlumbing} provides that the homeomorphism type of the link of singularity determines the topology of the minimal good resolution.
\added{The following definition will allow us to discuss what we mean by determines in a categorical form. Note that we discuss \Spinc structures further in Section \ref{FloerSec}.}
\begin{definition}
Let \(X_{G_1}\) and \(X_{G_2}\) be plumbings and \(f\colon X_{G_1}\to X_{G_2}\) be an orientation preserving diffeomorphism.
We say that \(f\) \emph{respects the plumbings} if when restricted to each disk bundle inside \(X_{G_1}\), \(f\) becomes an isomorphism with some disk bundle in \(X_{G_2}\).
If \(Y_{1}\) and \(Y_{2}\) are links of the normal complex surface singularities \(V_1\) and \(V_2\), we say a diffeomorphism \(f\colon Y_1\to Y_2\) is \emph{graph-induced} if it extends over the minimal resolutions to a diffeomorphism \(\tilde{f}\colon\tilde{V}_1\to\tilde{V}_2 \) that respects plumbings, and we define a graph induced isotopy of diffeomorphisms similarly.
The \emph{groupoid of links of normal complex surface singularities}  \(C_{\Link}^{\Spin^c}\) is the category with objects links of normal complex surface singularities eqiupped with \Spinc structures and morphisms graph-induced diffeomorphisms between them up to graph-induced isotopy.
\end{definition}
\added{The result of \cite{calcForPlumbing} then becomes a hom space of \(C_{\Link}^{\Spin^c}\) is non-empty if and only if there exists a diffeomorphism between \(Y_1\) and \(Y_2\) (which exists if and only if there is a homeomorphism between \(Y_1\) and \(Y_2\)).
Further note that an extension of a diffeomorphism of the boundary will extend to a diffeomorphism that respects plumbings uniquely away from the regions where the plumbing operation has been done.
On those regions, due to needing to respect both bundle structures we are guaranteed a contractible choice of extensions.}

\subsubsection{Generalized Algebraic Links}\label{subsubsecgenalglinks}
Links of normal complex surface singularities depend on a normal complex surface singularity, whereas algebraic links depend on a singular plane curve but have a smooth ambient space.
For generalized algebraic links we will be combining these notions and will thus need the following concept
\begin{definition}
Let \(S\) be a normal two-dimensional complex variety \(S\) and \(C \subseteq S\) a complex curve inside \(S\).
Let \(\{p_1,p_2,\ldots ,p_n\}\) be a collection of potential singularities in \(S\), which may or may not also be on \(C\).
We say that \((S,C,\{p_1,p_2,\ldots ,p_n\})\) is a \emph{collection of nested singularities} or if \(n=1\) a \emph{nested singularity}.
\end{definition}
Here we have both a complex surface and a curve, and both are allowed to be singular.
Figure \ref{subfig:genalgknot} provides a visualization of a nested singularity and its link next to figures depicting algebraic links and a link of a normal complex surface singularity.
We are now in a position to formally define a generalized algebraic link as follows:

\begin{definition}
Let \((S,C,\{p_1,p_2,\ldots, p_n\})\) be a collection of nested singularities, and let \(\gamma\) be a path connecting  the \(p_i\)  that does not pass through any singularity not in the list.
Take a regular neighborhood \(N\) of \(\gamma\) so that \(C\cap N\) is a regular neighborhood of \(C\cap \gamma\).
Then \(L=\bou N \cap C\)  is a \emph{strong generalized algebraic link} inside the link of singularity \(Y = \bou N\).
A \emph{strong generalized algebraic knot} is a generalized algebraic link so that \(\bou N\cap C\) has only one component.
\end{definition}

The path \(\gamma\) exiting \(C\) would indicate taking a disjoint union of generalized algebric links while \(\gamma\) remaining in \(C\) creates a connected sum of \added{generalized} algebraic links \added{along the components that \(\gamma\) connects}.
If one cares only about algebraic knots one may assume \(\gamma\) lies entirely in \(C\) and if one cares only about prime algebraic knots one may assume that one is taking the link of singularity of a single point.
The primary motivation for including multiple potential singular points and the path \(\gamma\) was to include connect sums in the definition as that is the appropriate scope for knot lattice homology.
\added{Even if one cares only about prime knots, one can often realize these as the dual knots of surgeries on connect sums, and thus it makes sense to include connect sums regardless.}

Eisenbud and Neumann use \added{a single point of singularity} as a part of a general definition of an algebraic link towards the end of the book \cite{planecurvesingularities}.
We are choosing the term generalized algebraic link to avoid confusion because algebraic links are often assumed to be in \(S^3\).
Furthermore, while Eisenbud and Neumann  do not allow for connect sums, their definition is set up to account for links with multiplicity, where the corresponding algebraic curve \(C\) is not presumed to be reduced \cite{planecurvesingularities}.

\begin{definition}
A \emph{resolution of a nested singularity} \((S,C,\{p_1,p_2,\ldots,p_n\})\) is a map \(\pi:\tilde{S}\to S\) that is a resolution of \(S\) at \(\{p_1,\ldots, p_n\}\) and it is a resolution of \(C\), i.e. the total transform of \(C\) in \(\tilde{S}\) has all smooth components with intersections locally modeled in algebraic charts on \(xy=0\).
\added{The \emph{category of resolutions} associated to a nested singularity has objects resolutions of that singularity and morphisms analytic maps \(\phi\colon \tilde{S_1}\to \tilde{S_2}\) that commute with the resolution maps and that take the proper transform of \(C\) in \(\tilde{S_1}\) to the proper transform of \(C\) in \(\tilde{S_2}\).}
\end{definition}

See Figure \ref{subfig:knotsingres} for a representation of a resolution of a normal surface singularity.
This subfigure has an extra sphere in contrast to Figure \ref{subfig:surfacesingres} to highlight that additional blow-ups may be needed after resolving the surface in order to completely resolve the curve.
One can see the proper transform of the curve pierce the extra sphere transversely to highlight that the proper transform would be a fiber of the plumbing resolution.
As with normal complex surface singularities, minimal good resolutions \added{can be defined similarly, } exist, and are unique.
The same argument for existence and uniqueness carries through from the link of a normal complex surface singularity case, except one considers all irreducible components of the total transform of \(C\) and not just the irreducible components of the exceptional fibers in identifying problem points. 

Theorem 24.1 of Eisenbud and Neumann and \cite{linkLattice} establishes the equivalence of resolutions of nested singularities with collections of fibers in negative definite plumbings whose bases have positive intersections \added{where Equation (\ref{Grauert}) has an integral vector solution \(\vec{l}\)} \cite{planecurvesingularities}.
In Equation (\ref{Grauert}), \(\Gc\) represents the adjacency matrix for our plumbing graph as an endomorphism on the vector space freely generated by the weighted vertices of the plumbing graph ,and \(\vec{s}\) is the vector with coefficient on a vertex \(v_i\) equal to the number of unweighted vertices adjacent to \(v_i\).
\begin{equation}
\Gc\vec{l}+\vec{s}=0 \label{Grauert}
\end{equation}
Note that Equation (\ref{Grauert}) represents a relative `resingularization' criterion as to whether the disk fibers can still be carved out by holomorphic equations in the resulting singular complex surface.
This condition essentially requires that our link be integrally null homologous, but it also means that if one is only attempting to reduce to the minimal resolution of the underlying normal complex singularity then one is guaranteed the resulting potentially singular curve is holomorphic.
In particular, restricting the graph to the exceptional fibers of that resolution yields a graph whose link of singularity is \(S^3\) inside which our original link will be integrally null homologous and satisfy the criterion.

\begin{definition}
\added{A \emph{weak nested singularity} \((S,C,\{p_1,\ldots, p_n\})\) is a normal complex surface \(S\) along with a curve \(C\) inside \(S\) which is not holomorphic but whose total transform becomes holomorphic in any resolution of \(S\) over \(\{p_1,\ldots, p_n\}\).
 A \emph{weak generalized algebraic link} is the link of singularity associated to a weak nested singularity.}
\end{definition}

\added{The discussion of the category of resolutions and minimal resolutions in particular associated to strong nested singularities applies just as well to a weak nested singularity, and thus we can focus on weak generalized algebraic links.}

We can then represent \added{weak generalized} algebraic knots using unweighted vertices attached to a plumbing graph, \added{where each unweighted vertex represents a disk in the disk bundle it is adjacent to, or in the case of non-prime knots a boundary connect sum of such disks.}
All the blow-ups and blow-downs that can be done to the plumbings graphs representing resolutions of normal complex surface singularities can be done to the plumbing resolutions for algebraic links (See Figure \ref{fig:trefblowup}); however, we also have moves that come from blowing up a point on the proper transform of \(C\).
One option is if this point is away from the exceptional fiber, which is called \emph{blowing up the unweighted vertex} (see Figure \ref{subfig:vertexblowupspec}).
Another option is if the blown up point is an intersection between the proper transform of C and an exceptional fiber, which results in \emph{blowing up an edge incident to the unweighted vertex} (see Figure \ref{subfig:edgeblowupspec}).

\added{
As with links of normal complex surface singularities, we can form a category to capture which diffeomorphisms actually respect the structure of the links.
Section \ref{BlowUpsSufficeSec} will focus on proving that the hom set for this category is non-empty if and only if there exists a any relative diffeomorphism between the links.
}
\begin{definition}
Let \((X_{G_1},F_1)\) and \((X_{G_2},F_2)\) be plumbings with collections of oriented fibers singled out.
An orientation preserving diffeomorphism \(f\colon X_{G_1}\to X_{G_2}\) \emph{respects the plumbings with fibers} if it respects the plumbing is an orientation preserving diffeomorphism from \(F_1\) to \(F_2\).
Given two weak generalized algebraic links \((Y_1,L_1)\) and \((Y_2,L_2)\) for the weak nested singularities \((S_1,C_1,P_1)\) and \((S_2,C_2,P_2)\) an orientation preserving diffeomorphism \(f\colon (Y_1,L_1)\to (Y_2,L_2)\) is \emph{graph-induced} if it extends to a diffeomorphism that respects the plumbings with fibers over the minimal resolutions
\emph{The groupoid of weak generalized algebraic knots \(C_{\GenAlg}^{\Spin^c}\)} has objects weak generalized algebraic links equipped with \Spinc structures and morphisms the isotopy classes of graph induced diffeomorphisms between them that take one \Spinc structure to the other.
Let \(C_{\GenAlgKnot}^{\Spin^c}\) be the full subcategory of \(C_{\GenAlg}^{\Spin^c}\) whose objects have as ambient three-manifold a rational homology sphere and whose links have a single component.
\end{definition}

\subsection{Checking Blow-Ups Suffice}\label{BlowUpsSufficeSec}

This section will focus on showing that the link type \((Y,L)\) of a \added{weak} generalized algebraic link determines the topology of the minimal good resolution of the nested singularity type used to create it.
In particular, we show that any two good representations of the same generalized algebraic link are equivalent up to a series of blow-ups and blow-downs, which can be done in the algebraic/analytic setting.
This fact may potentially be known to experts, but we could not find a proof in the literature prior to the independent proof by Jackson \cite{otherInvariance}.
The equivalence of link types we are considering is up to homeomorphism of the three-manifold which takes one link to the other.
However, while we are showing that knowledge of the algebraic/analytic setting suffices to get between any two represenations of the same link type, we will have to use knowledge regarding a broader class of three-manifolds and links to do so.

\begin{definition}
A \emph{graph manifold} is a three-manifold which is the boundary of a plumbing of \added{disk bundles of} surfaces (\added{base surfaces may be} orientable or not), with positive or negative intersections between the base surfaces, and any intersection form. The \added{base} surfaces may even have boundary, where the three-manifold is the fiber boundary instead of the boundary of the full four-manifold.

A \emph{graph link} is a link encoded as a graph manifold with boundary, where boundary components are equipped with isotopy classes of \added{simple} closed curves.
The corresponding link is achieved by gluing solid tori to the manifold so that the meridians of the solid tori are sent to the isotopy classes in question, and then using the cores of the solid tori for the link.
\end{definition}

Graph links can be represented as unweighted vertices in plumbing graphs, where the meridinal data is assumed to come from the boundary of the \added{fiber of the unweighted vertex}.
They are often given a different decoration to distinguish them from other vertices with boundary components that do not have meridinal data. Here unweighted vertices that are meant to represent links do not have genus or a number of boundary components listed.
They also are given names with a subscript of 0.

\added{As with our discussion of dual graphs for weak generalized algebraic links, the corresponding link to a generalized algebriac link can be viewed as the boundary of a fiber represented by a vertex \(v_0\).
However, for the purposes of this proof it is easier to consider such vertices \(v_0\) as separate base surfaces with meridinal data on the boundary.
Here, we are setting up what information we will temporarily forget in reducing to normal form from which we can reconstruct our original graph.
Forgetting the meridinal data would be equivalent to considering the link complement, which contains far more information about the link than the ambient three-manifold, which is what remains if one forgets fibers.}

In \cite[p.~304-306]{calcForPlumbing}, Neumann defined a series of moves R1-R8 that can be done in the class of graph manifolds.
In these diagrams, genus information is encoded in brackets with negative numbers representing non-orientable surfaces with those numbers of crosscaps, and if the surface has boundary, the number of boundary components is included as a second number inside the brackets.
Furthermore \cite[p.~311-312]{calcForPlumbing} defines a \emph{normal form} of a graph, satisfying 6 conditions, N1-N6, such that the homeomorphism type of a graph three-manifold determines a graph in normal form.
Since the definitions are long and not central to this paper outside of this subsection, we will not reproduce them here.

In order to show a set of moves go between any two graph representations of the same three-manifold, Neumann describes a strategy of determining a graph representation for that type to which one can reduce any representation given using only those moves \cite{calcForPlumbing}.
Then one shows that \added{the representative} form is determined by the corresponding normal form for the three-manifold.
This strategy was implemented to show that the homeomorphism type of a link of singularity determined homeomorphism type of the minimal resolution.
Neumann also stated (with proof omitted) that this strategy worked to show that any two representations of the same graph link could be related via moves R1-R7 \cite{calcForPlumbing}

Here we will be considering a minimal plumbing representation \(\Delta\) for our generalized algebraic link, which as mentioned in the previous section exists and is unique for the singularity type.
Note that a good resolution will be minimal if and only if it has no \(-1\) weighted vertices of degree 2 or less, counting edges to the unweighted vertices in this count.
Let \(\Gamma\) represent the normal form for the graph with boundary (and no meridinal data).

\begin{proposition}\label{prop:deltaGamma}
One can get from \(\Delta\) to \(\Gamma\) by
\begin{enumerate}
    \item Collapsing the path from \added{ a }unweighted vertex \(v_0\) \added{of degree one} to the first weighted vertex \(v\) (depicted here with weighte \(e\)), which either has degree greater than 2 or is not represented by a sphere.
    This leaves \(v\) as an unweighted vertex representing a surface with boundary and no meridinal data.
		
		\begin{center}
\begingroup%
  \makeatletter%
  \providecommand\color[2][]{%
    \errmessage{(Inkscape) Color is used for the text in Inkscape, but the package 'color.sty' is not loaded}%
    \renewcommand\color[2][]{}%
  }%
  \providecommand\transparent[1]{%
    \errmessage{(Inkscape) Transparency is used (non-zero) for the text in Inkscape, but the package 'transparent.sty' is not loaded}%
    \renewcommand\transparent[1]{}%
  }%
  \providecommand\rotatebox[2]{#2}%
  \newcommand*\fsize{\dimexpr\f@size pt\relax}%
  \newcommand*\lineheight[1]{\fontsize{\fsize}{#1\fsize}\selectfont}%
  \ifx\svgwidth\undefined%
    \setlength{\unitlength}{270.99122429bp}%
    \ifx\svgscale\undefined%
      \relax%
    \else%
      \setlength{\unitlength}{\unitlength * \real{\svgscale}}%
    \fi%
  \else%
    \setlength{\unitlength}{\svgwidth}%
  \fi%
  \global\let\svgwidth\undefined%
  \global\let\svgscale\undefined%
  \makeatother%
  \begin{picture}(1,0.17823305)%
    \lineheight{1}%
    \setlength\tabcolsep{0pt}%
    \put(0,0){\includegraphics[width=\unitlength]{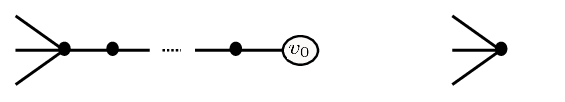}}%
    \put(0.11110474,0.12615564){\makebox(0,0)[lt]{\lineheight{1.25}\smash{\begin{tabular}[t]{l}$e$\end{tabular}}}}%
    \put(0.10859441,0.04416684){\makebox(0,0)[lt]{\lineheight{1.25}\smash{\begin{tabular}[t]{l}$[g,r]$\end{tabular}}}}%
    \put(0.88125846,0.04258515){\makebox(0,0)[lt]{\lineheight{1.25}\smash{\begin{tabular}[t]{l}$[g,r+1]$\end{tabular}}}}%
    \put(0.19575488,0.12044781){\makebox(0,0)[lt]{\lineheight{1.25}\smash{\begin{tabular}[t]{l}$e_1$\end{tabular}}}}%
    \put(0.4106438,0.11954492){\makebox(0,0)[lt]{\lineheight{1.25}\smash{\begin{tabular}[t]{l}$e_n$\end{tabular}}}}%
  \end{picture}%
\endgroup%

    \end{center}

    \item If a component of \(\Delta\) looks like
    
    \begin{center}
\begingroup%
  \makeatletter%
  \providecommand\color[2][]{%
    \errmessage{(Inkscape) Color is used for the text in Inkscape, but the package 'color.sty' is not loaded}%
    \renewcommand\color[2][]{}%
  }%
  \providecommand\transparent[1]{%
    \errmessage{(Inkscape) Transparency is used (non-zero) for the text in Inkscape, but the package 'transparent.sty' is not loaded}%
    \renewcommand\transparent[1]{}%
  }%
  \providecommand\rotatebox[2]{#2}%
  \newcommand*\fsize{\dimexpr\f@size pt\relax}%
  \newcommand*\lineheight[1]{\fontsize{\fsize}{#1\fsize}\selectfont}%
  \ifx\svgwidth\undefined%
    \setlength{\unitlength}{269.61523475bp}%
    \ifx\svgscale\undefined%
      \relax%
    \else%
      \setlength{\unitlength}{\unitlength * \real{\svgscale}}%
    \fi%
  \else%
    \setlength{\unitlength}{\svgwidth}%
  \fi%
  \global\let\svgwidth\undefined%
  \global\let\svgscale\undefined%
  \makeatother%
  \begin{picture}(1,0.2675991)%
    \lineheight{1}%
    \setlength\tabcolsep{0pt}%
    \put(0,0){\includegraphics[width=\unitlength]{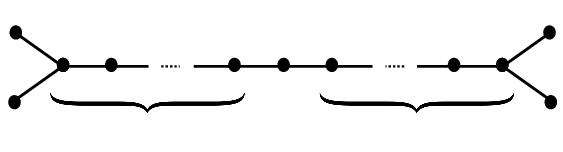}}%
    \put(0.09078803,0.17846343){\makebox(0,0)[lt]{\lineheight{1.25}\smash{\begin{tabular}[t]{l}$-2$\end{tabular}}}}%
    \put(0.00926911,0.11311977){\makebox(0,0)[lt]{\lineheight{1.25}\smash{\begin{tabular}[t]{l}$-2$\end{tabular}}}}%
    \put(0.00567678,0.22987043){\makebox(0,0)[lt]{\lineheight{1.25}\smash{\begin{tabular}[t]{l}$-2$\end{tabular}}}}%
    \put(0.17700393,0.17756535){\makebox(0,0)[lt]{\lineheight{1.25}\smash{\begin{tabular}[t]{l}$-2$\end{tabular}}}}%
    \put(0.39613597,0.17756535){\makebox(0,0)[lt]{\lineheight{1.25}\smash{\begin{tabular}[t]{l}$-2$\end{tabular}}}}%
    \put(0.48414798,0.17936151){\makebox(0,0)[lt]{\lineheight{1.25}\smash{\begin{tabular}[t]{l}$-3$\end{tabular}}}}%
    \put(0.56587343,0.17756535){\makebox(0,0)[lt]{\lineheight{1.25}\smash{\begin{tabular}[t]{l}$-2$\end{tabular}}}}%
    \put(0.87143627,0.17846343){\makebox(0,0)[lt]{\lineheight{1.25}\smash{\begin{tabular}[t]{l}$-2$\end{tabular}}}}%
    \put(0.96200672,0.10862936){\makebox(0,0)[lt]{\lineheight{1.25}\smash{\begin{tabular}[t]{l}$-2$\end{tabular}}}}%
    \put(0.95931253,0.22897234){\makebox(0,0)[lt]{\lineheight{1.25}\smash{\begin{tabular}[t]{l}$-2$\end{tabular}}}}%
    \put(0.78522038,0.17756535){\makebox(0,0)[lt]{\lineheight{1.25}\smash{\begin{tabular}[t]{l}$-2$\end{tabular}}}}%
    \put(0.25362187,0.02415214){\makebox(0,0)[lt]{\lineheight{1.25}\smash{\begin{tabular}[t]{l}$b$\end{tabular}}}}%
    \put(0.73285384,0.02415214){\makebox(0,0)[lt]{\lineheight{1.25}\smash{\begin{tabular}[t]{l}$c$\end{tabular}}}}%
  \end{picture}%
\endgroup%

		\end{center}
    
    where \(b\geq 0\) and \(c\geq 1\), then the corresponding component of \(\Gamma\) looks like 
    
    \begin{center}
\begingroup%
  \makeatletter%
  \providecommand\color[2][]{%
    \errmessage{(Inkscape) Color is used for the text in Inkscape, but the package 'color.sty' is not loaded}%
    \renewcommand\color[2][]{}%
  }%
  \providecommand\transparent[1]{%
    \errmessage{(Inkscape) Transparency is used (non-zero) for the text in Inkscape, but the package 'transparent.sty' is not loaded}%
    \renewcommand\transparent[1]{}%
  }%
  \providecommand\rotatebox[2]{#2}%
  \newcommand*\fsize{\dimexpr\f@size pt\relax}%
  \newcommand*\lineheight[1]{\fontsize{\fsize}{#1\fsize}\selectfont}%
  \ifx\svgwidth\undefined%
    \setlength{\unitlength}{55.478863bp}%
    \ifx\svgscale\undefined%
      \relax%
    \else%
      \setlength{\unitlength}{\unitlength * \real{\svgscale}}%
    \fi%
  \else%
    \setlength{\unitlength}{\svgwidth}%
  \fi%
  \global\let\svgwidth\undefined%
  \global\let\svgscale\undefined%
  \makeatother%
  \begin{picture}(1,0.63869476)%
    \lineheight{1}%
    \setlength\tabcolsep{0pt}%
    \put(0,0){\includegraphics[width=\unitlength]{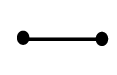}}%
    \put(0.06426818,0.43678358){\makebox(0,0)[lt]{\lineheight{1.25}\smash{\begin{tabular}[t]{l}$b+1$\end{tabular}}}}%
    \put(0.77317688,0.43616932){\makebox(0,0)[lt]{\lineheight{1.25}\smash{\begin{tabular}[t]{l}$c+1$\end{tabular}}}}%
    \put(0.0792886,0.03912417){\makebox(0,0)[lt]{\lineheight{1.25}\smash{\begin{tabular}[t]{l}$[-1]$\end{tabular}}}}%
    \put(0.75257452,0.03980797){\makebox(0,0)[lt]{\lineheight{1.25}\smash{\begin{tabular}[t]{l}$[-1]$\end{tabular}}}}%
  \end{picture}%
\endgroup%

		\end{center}
    
    \item Any components of \(\Delta\) that look like
    
    \begin{center}
\begingroup%
  \makeatletter%
  \providecommand\color[2][]{%
    \errmessage{(Inkscape) Color is used for the text in Inkscape, but the package 'color.sty' is not loaded}%
    \renewcommand\color[2][]{}%
  }%
  \providecommand\transparent[1]{%
    \errmessage{(Inkscape) Transparency is used (non-zero) for the text in Inkscape, but the package 'transparent.sty' is not loaded}%
    \renewcommand\transparent[1]{}%
  }%
  \providecommand\rotatebox[2]{#2}%
  \newcommand*\fsize{\dimexpr\f@size pt\relax}%
  \newcommand*\lineheight[1]{\fontsize{\fsize}{#1\fsize}\selectfont}%
  \ifx\svgwidth\undefined%
    \setlength{\unitlength}{139.6093245bp}%
    \ifx\svgscale\undefined%
      \relax%
    \else%
      \setlength{\unitlength}{\unitlength * \real{\svgscale}}%
    \fi%
  \else%
    \setlength{\unitlength}{\svgwidth}%
  \fi%
  \global\let\svgwidth\undefined%
  \global\let\svgscale\undefined%
  \makeatother%
  \begin{picture}(1,0.40163643)%
    \lineheight{1}%
    \setlength\tabcolsep{0pt}%
    \put(0,0){\includegraphics[width=\unitlength]{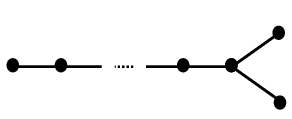}}%
    \put(0.00898349,0.23469969){\makebox(0,0)[lt]{\lineheight{1.25}\smash{\begin{tabular}[t]{l}$e_k$\end{tabular}}}}%
    \put(0.77252872,0.23643408){\makebox(0,0)[lt]{\lineheight{1.25}\smash{\begin{tabular}[t]{l}$e$\end{tabular}}}}%
    \put(0.92662692,0.09463212){\makebox(0,0)[lt]{\lineheight{1.25}\smash{\begin{tabular}[t]{l}$-2$\end{tabular}}}}%
    \put(0.92142387,0.32703996){\makebox(0,0)[lt]{\lineheight{1.25}\smash{\begin{tabular}[t]{l}$-2$\end{tabular}}}}%
    \put(0.60602753,0.23643408){\makebox(0,0)[lt]{\lineheight{1.25}\smash{\begin{tabular}[t]{l}$e_1$\end{tabular}}}}%
    \put(0.17589952,0.2329653){\makebox(0,0)[lt]{\lineheight{1.25}\smash{\begin{tabular}[t]{l}$e_{k-1}$\end{tabular}}}}%
  \end{picture}%
\endgroup%

		\end{center}
    
    with \(k>1\) and \(e_i\leq 2\) for all \(i\), appear as is in \(\Gamma\).
    
    \item For all remaining components, substitute portions of the graph that look like the figures on the left with the portions on the right for \(e\leq -3\) and \(b\geq 1\).
    
    \begin{center}
\begingroup%
  \makeatletter%
  \providecommand\color[2][]{%
    \errmessage{(Inkscape) Color is used for the text in Inkscape, but the package 'color.sty' is not loaded}%
    \renewcommand\color[2][]{}%
  }%
  \providecommand\transparent[1]{%
    \errmessage{(Inkscape) Transparency is used (non-zero) for the text in Inkscape, but the package 'transparent.sty' is not loaded}%
    \renewcommand\transparent[1]{}%
  }%
  \providecommand\rotatebox[2]{#2}%
  \newcommand*\fsize{\dimexpr\f@size pt\relax}%
  \newcommand*\lineheight[1]{\fontsize{\fsize}{#1\fsize}\selectfont}%
  \ifx\svgwidth\undefined%
    \setlength{\unitlength}{329.72538757bp}%
    \ifx\svgscale\undefined%
      \relax%
    \else%
      \setlength{\unitlength}{\unitlength * \real{\svgscale}}%
    \fi%
  \else%
    \setlength{\unitlength}{\svgwidth}%
  \fi%
  \global\let\svgwidth\undefined%
  \global\let\svgscale\undefined%
  \makeatother%
  \begin{picture}(1,0.40633956)%
    \lineheight{1}%
    \setlength\tabcolsep{0pt}%
    \put(0,0){\includegraphics[width=\unitlength]{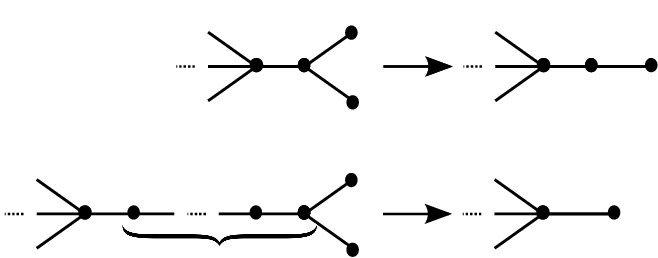}}%
    \put(0.45416183,0.31455472){\makebox(0,0)[lt]{\lineheight{1.25}\smash{\begin{tabular}[t]{l}$e$\end{tabular}}}}%
    \put(0.52714805,0.25257939){\makebox(0,0)[lt]{\lineheight{1.25}\smash{\begin{tabular}[t]{l}$-2$\end{tabular}}}}%
    \put(0.5172057,0.35291824){\makebox(0,0)[lt]{\lineheight{1.25}\smash{\begin{tabular}[t]{l}$-2$\end{tabular}}}}%
    \put(0.38366342,0.31455472){\makebox(0,0)[lt]{\lineheight{1.25}\smash{\begin{tabular}[t]{l}$e_i$\end{tabular}}}}%
    \put(0.37351927,0.24373799){\makebox(0,0)[lt]{\lineheight{1.25}\smash{\begin{tabular}[t]{l}$[g_i,r_i]$\end{tabular}}}}%
    \put(0.87155249,0.31165249){\makebox(0,0)[lt]{\lineheight{1.25}\smash{\begin{tabular}[t]{l}$e+1$\end{tabular}}}}%
    \put(0.81330797,0.31068507){\makebox(0,0)[lt]{\lineheight{1.25}\smash{\begin{tabular}[t]{l}$e_i$\end{tabular}}}}%
    \put(0.79735937,0.23890092){\makebox(0,0)[lt]{\lineheight{1.25}\smash{\begin{tabular}[t]{l}$[g_i,r_i]$\end{tabular}}}}%
    \put(0.96530422,0.2534121){\makebox(0,0)[lt]{\lineheight{1.25}\smash{\begin{tabular}[t]{l}$[-1]$\end{tabular}}}}%
    \put(0.97950579,0.31397031){\makebox(0,0)[lt]{\lineheight{1.25}\smash{\begin{tabular}[t]{l}$0$\end{tabular}}}}%
    \put(0.52224649,0.03220301){\makebox(0,0)[lt]{\lineheight{1.25}\smash{\begin{tabular}[t]{l}$-2$\end{tabular}}}}%
    \put(0.52101087,0.13641149){\makebox(0,0)[lt]{\lineheight{1.25}\smash{\begin{tabular}[t]{l}$-2$\end{tabular}}}}%
    \put(0.13505667,0.09514573){\makebox(0,0)[lt]{\lineheight{1.25}\smash{\begin{tabular}[t]{l}$e_i$\end{tabular}}}}%
    \put(0.10936404,0.01907201){\makebox(0,0)[lt]{\lineheight{1.25}\smash{\begin{tabular}[t]{l}$[g_i,r_i]$\end{tabular}}}}%
    \put(0.19542742,0.096311){\makebox(0,0)[lt]{\lineheight{1.25}\smash{\begin{tabular}[t]{l}$-2$\end{tabular}}}}%
    \put(0.44818728,0.09317573){\makebox(0,0)[lt]{\lineheight{1.25}\smash{\begin{tabular}[t]{l}$-2$\end{tabular}}}}%
    \put(0.37285181,0.08760431){\makebox(0,0)[lt]{\lineheight{1.25}\smash{\begin{tabular}[t]{l}$-2$\end{tabular}}}}%
    \put(0.33347455,-0.00762026){\makebox(0,0)[lt]{\lineheight{1.25}\smash{\begin{tabular}[t]{l}$b$\end{tabular}}}}%
    \put(0.91739225,0.09498324){\makebox(0,0)[lt]{\lineheight{1.25}\smash{\begin{tabular}[t]{l}$b$\end{tabular}}}}%
    \put(0.81548466,0.09042953){\makebox(0,0)[lt]{\lineheight{1.25}\smash{\begin{tabular}[t]{l}$e_i$\end{tabular}}}}%
    \put(0.79735929,0.02626383){\makebox(0,0)[lt]{\lineheight{1.25}\smash{\begin{tabular}[t]{l}$[g_i,r_i]$\end{tabular}}}}%
    \put(0.90892197,0.03316469){\makebox(0,0)[lt]{\lineheight{1.25}\smash{\begin{tabular}[t]{l}$[-1]$\end{tabular}}}}%
  \end{picture}%
\endgroup%

		\end{center}
Substitutions 2-4 are the substitutions in Theorem 8.2 of \cite{calcForPlumbing}.
\end{enumerate}
Furthermore graph \(\Delta\) can be recovered from \(\Gamma\) with a choice of meridinal slopes on the boundary components.
\end{proposition}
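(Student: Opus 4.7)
The plan is to treat the four listed operations as a procedure for moving between $\Delta$ and $\Gamma$ and to verify (i) that each operation preserves the graph-manifold (with boundary) that the plumbing represents, (ii) that the end product satisfies Neumann's normal form conditions N1--N6, and (iii) that the reduction is reversible, which is what is needed to recover $\Delta$ from $\Gamma$.

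For (i), the first operation is the most delicate one because the unweighted vertex carries meridinal data, and we are throwing that meridinal data away. However, since $\Delta$ is minimal, the chain from the unweighted vertex $v_0$ up to the first vertex $v$ of degree $\geq 3$ or of nonzero genus consists of $(-2)$-vertices (no $-1$'s, by minimality, once $v_0$'s edge is counted in the degree). A chain of $(-2)$-vertices attached to $v_0$ can be absorbed into $v$ by a sequence of blow-downs in the \emph{boundary-plumbing} calculus: deleting $v_0$ turns the next vertex into an unweighted genus-$0$ surface with one boundary component, and each subsequent $(-2)$-vertex along the chain becomes a boundary component added to the next vertex; iterating this, the entire chain collapses onto $v$, which becomes the unique unweighted vertex with the correct boundary data but no meridinal information. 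Operations (2)--(4) are the substitutions of Theorem 8.2 of Neumann \cite{calcForPlumbing}, which are known to preserve the oriented homeomorphism type of the underlying graph manifold.

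For (ii), one checks that after performing (1)--(4) no normal form obstruction remains. The minimal resolution condition on $\Delta$ already rules out certain $(-1)$-vertices, and the substitutions in (4) are precisely those that clean up the residual configurations (chains of $(-2)$'s abutting the places where normal form would otherwise fail) that Neumann identifies as the only remaining violations of N1--N6 once the graph is negative definite and has no blow-downable $(-1)$-spheres. The $E_8$-type substitution in (2) is what handles the exceptional configuration that cannot be reached by the Theorem 8.2 substitutions alone. For (3), such chains are already in normal form and there is nothing to do.

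For reversibility, I would argue step by step. Each substitution in (4) is presented with a distinguished sign (one side has strictly fewer vertices or strictly simpler structure than the other), so the rules can be read backwards unambiguously. Reversing (2) is similarly forced because the configuration on the right (a $(-1)$-sphere attached to a vertex via a $0$-edge in the boundary calculus) is recognizable and unique. Finally, to reverse (1) one must recover the chain of $(-2)$'s attached to the unweighted vertex, which is exactly the minimal plumbing resolution of the local knot type read off from the meridinal data on the boundary component of $v$; such a minimal chain exists and is unique by the uniqueness of minimal resolutions of plane-curve singularities discussed in Subsubsection \ref{subsubsecalglinks}. The main obstacle, and the reason the argument requires Neumann's machinery rather than the singularity-theory uniqueness alone, is that we need to know that the sequence of substitutions terminates at something that is \emph{independent of choices}: this is exactly what the normal form theorem of \cite{calcForPlumbing} provides, so once we have verified that (1)--(4) terminate in normal form, uniqueness of the normal form for the fixed homeomorphism type of $(Y,L)$ pins down $\Gamma$, and then the reversibility above pins down $\Delta$.
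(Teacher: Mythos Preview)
Your argument has a genuine gap in the treatment of operation (1). You claim that minimality of $\Delta$ forces the chain from $v_0$ to $v$ to consist entirely of $(-2)$-spheres, but minimality only rules out $(-1)$-spheres of degree at most two; the chain vertices may carry any weight $\leq -2$. This invalidates both halves of your treatment of (1). Your mechanism for why (1) preserves the graph-manifold (``each subsequent $(-2)$-vertex along the chain becomes a boundary component added to the next vertex'') is specific to $(-2)$-chains and does not work when the chain has a $(-3)$ or lower weight in it. Likewise, your reversibility argument for (1) (``recover the chain of $(-2)$'s'') presupposes the same false structure.

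The paper handles (1) by a sequence of Neumann's R8 moves, which absorb a valence-$\leq 2$ sphere vertex into an adjacent boundary vertex regardless of its Euler number. For reversibility of (1), the paper argues via the analogy with the preferred star-shaped plumbing of a Seifert fibered space from \cite{seifertStuff}: given the meridinal data on a boundary component of $\Gamma$, there is a unique chain with all weights $\leq -2$ realizing that data, and negative-definiteness of the reconstructed $\Delta$ is then guaranteed by hypothesis rather than something that has to be checked. A secondary issue is that your verification of normal form is too informal: the paper goes through N1--N6 one at a time, explaining which substitution prevents each violation (for instance, no $0$-framed spheres of degree $\geq 2$ are ever produced, so R3--R6 are unavailable; none of the starting configurations of R7 are negative definite or produced by the substitutions; and Lemma 8.1 of \cite{calcForPlumbing} rules out one of the forbidden N6 components).
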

\begin{proof}
Collapsing the path from an unweighted vertex \(v_0\) to the first vertex \(v\) either not represented by a sphere or of degree greater than or equal to 3 can be achieved by a sequence of R8 moves, and Theorem 8.2 of \cite{calcForPlumbing} demonstrates the validity of moves (2)-(4).
What needs to be shown is that after these moves, the resulting graph, which we will call \(\Gamma'\) is in normal form.

\added{Now, we} check N1\added{, which states that the moves R1-R8 cannot be applied except on components of the form in substitution 3.}
R8 moves require degree 1 vertices representing disks which we have removed in Substitution 1 and do not reappear in Substitutions 2-4. 
The starting positions for substitutions 2-4 include any starting case for an R2 moves on \(\Gamma'\) that could violate N1.
Furthermore, we started with no zero framed spheres, and the moves do not produce any 0 framed spheres.
Thus there are no R3-R6 moves available on \(\Gamma'\).
Finally none of the starting components of R7 moves could be produced by the moves described above, nor are any of them negative definite.
Thus there will be no R7 moves on \(\Gamma'\).
We assumed that there were no R1 moves available, and the moves provided do not produce any -1 or +1 framed \added{spheres}, and thus here will be no R1 moves available on \(\Gamma'\).
Hence, \(\Gamma'\) satisfies N1.

The chains, paths of vertices representing spheres of degree 1 or 2, produced in this process have weight less than or equal to \(-2\).
\added{All other chains of \(\Gamma'\) came from chains of \(\Delta\) and thus also have weights less than or equal to \(-2\)}.
\added{This is precisely what is needed for \(\Gamma'\) to satisfy} N2.

The Substitutions 2 and 4 above precisely prevent any portion of \(\Gamma'\) from violating N3.
\added{N4 concerns 0 framed bundles over an \(\mathbb{RP}^2\), which do not exist in \(\Delta\)}.
As such in \(\Gamma'\) these would be all produced by Substitution 4, which do not violate N4.
No components violating N5 are produced because the declaration in Substitution 3 specifically prevents Substitution 4 from producing such components.
Lemma 8.1 of \cite{calcForPlumbing} prevents the forbidden components of N6 from appearing.
Thus, having established that \(\Gamma'\) is in normal form, we can conclude that \(\Gamma'=\Gamma\).

To show that \(\Delta\) can be recovered from \(\Gamma\), we need to show these steps are reversible.
For Substitutions 2 and 4 this is clear: substitute the figure on the left for the figure on the right instead of right for left, and for Substitution 3, we do nothing.
What remains to be checked is if we can recover a single negative definite minimal \(\Delta\) from the information of meridians on the boundary components of \(\Gamma\).
\added{A given vertex \(v\) that has boundary components in \(\Gamma\) corresponds to a star-shaped subgraph \(\Delta'\), one ray (potentially of length zero) for every boundary component of \(v\), of a \(\Delta\) which could produce \(\Gamma\).
In particular rays of length greater than zero come from applying the first operation in this proposition, and a ray of length zero would correspond to the central vertex being unweighted in \(\Delta\) and the first operation did not apply (due to having degree greater than one).}

We will show that the meridinal data determines a unique minimal negative definite \(\Delta'\), except perhaps allowing \(v\) to be a negative \(-1\) sphere of degree 1 or 2 in the case where \(v\) is adjacent to other vertices in \(\Gamma\) and thus in any \(\Delta\).
In that case, the degree of \(v\) in \(\Delta\) would need to be at least 3, else it would have not been the vertex chosen in operation 1, and thus it could not be blown down in \(\Delta\).
Thus \(\Delta'\) must then appear as the corresponding subgraph in a minimal negative definite \(\Delta\) that produces \(\Gamma\).

In the case where there is a single ray of length zero, nothing more needs to be done. If there is a ray of length zero along with rays of greater length, the star decomposes as a  connect sum of links each of which is modeled off of the two core knots in a lens space \(L(p,q)\).
The lens space is determined based off of the meridinal data.
The minimal model for the lens space is a linear graph and adjoining unweighted vertices at either end. This can be viewed as coming from a specific seifert fibering with two singular fibers, referring us to the case below.

If there is no ray of length zero then \(\Delta'\) corresponds to a Seifert fibered space with  Seifert fibration determined by the meridinal data.
Furthermore, by the fact that the larger \(\Delta\) exists and is negative definite this Seifert fibered space supports a negative definite \(\Delta'\).
By the argument in Theorem 5.2 of \cite{seifertStuff}, we get a unique negative definite graph which is minimal way from the central vertex, in particular because the Seifert invariants being in normalized form does not place a restriction on the central vertex.

\end{proof}

The proposition above with Neumann's result that the homeomorphism type determines the normal form \(\Gamma\) \cite[Theorem 3.1]{calcForPlumbing} gives us a proof of Theorem \ref{BlowUpsSuffice}.
In particular, we are use Neumann's result to establish that the normal form for graphs describing the knot complements must be the same and Proposition \ref{prop:deltaGamma} to then recover the minimal graph for the link from the meridinal data on those knot complements.
\added{Having isomorphic graphs ensures that some diffeomorphism respecting fibers exists, but also given a diffeomorphism that extends  over the \(X_\Gamma\) as a plumbing respecting diffeomorphism up to isotopy that will extend  over the \(X_{\Delta}\) as as the moves for modifying the graphs are based on local models that the diffeomorphism respects.
In particular \(C_{\GenAlgKnot}^{\Spin^c}\) will have a morphism between two knots in three-manifolds equipped with \Spinc structures if there is some diffeomorphism respecting those \Spinc structures and knots.}


\section{Motivations for the Equivalence Type} \label{FloerSec}

Here we provide some more discussion of our target category of filtered spaces, and in particularly modeling within it some behavior we would expect from knot lattice homotopy based on its connection to Heegaard Floer homology and knot Floer homology.
While we will not go into detail on the Floer theory, a key aspect we would like to emulate are certain additional maps such as those from involutive Heegaard Floer homology and involutive knot Floer homology, and thus we need to state how we will encode such maps here.
However, these maps interact with the \Spinc structures on the underlying three-manifold, so we first discuss those more.

The exact definition of a \Spinc structure will not be needed here, but for a manifold \(M\), \(\SpincX{M}\) is affinely modeled on \(H^2(M;\Z)\) and there exists a natural map \(c_1\colon\SpincX{M} \to H^2(M;\Z)\).
In particular for \(\sfk \in \SpincX{X}\), if \(\sfk|_Y=\tf\) then \(c_1(\sfk)\) maps to \(c_1(\tf)\)
There also exists an involution \(\sfk\to \overline{\sfk}\) called conjugation, and \(c_1(\overline{\sfk}) = - c_1(\sfk) \).

In the context of lattice homology, it can be useful to think of \Spinc structures on \(Y\) in terms orbits of characteristic cohomology classes of \(X\) \added{under the action of \(H_2(X;\Z)\)}.
In the case where \(X\) is a plumbing of \added{disk bundles of} surfaces \added{with plumbing graph \(G\)} the first homology of \(X\) is isomorphic to the free part of \added{the first homology of} \(Y\), and as such, \(H^2(X;\Z) \cong H_2(X;\Z)^{\ast}\).
An element \(K \in H^2(X;\Z)\) is characteristic if \(K([S]) \equiv [S]\cdot [S] \pmod 2\) for all homology classes \([S]\).
It suffices to check that \(K(v_i) \equiv e_i \pmod 2\) for all vertices \(v_i\) of \(G\).
This is equivalent to \(K\) reducing mod 2 to the second Steifel-Whitney class  \(w_2(X)\).
Denote the set of characteristic cohomology classes on \(X\) by \(\Char(G)\).
For many four-manifolds including plumbings, the map \(c_1:\SpincX{X} \to \Char(G)\) is a bijection.
In this context,  \([S] \in H_2(X;\Z)\) can act on \(K \in \Char(G)\) by  \(K + [S]:= K+ 2\PD[S]\).
The orbits of this action on  \(\Char(G)\) correspond to the torsion elements of \(\SpincX{Y}\), and we will denote the orbit of \(\Char(G)\) corresponding to an element \(\tf \in \SpincX{Y}\) by \(\Char(G,\tf)\).



We will use the following definition to provide the level of structure we will be looking for in knot lattice homology as an invariant, including what it would mean for two such structures to be equivalent.
\begin{definition}\label{knotLatPackdef}
Given a knot \(K\) in a \added{rational homology} three-\added{sphere} \(Y\), a \emph{complete package for \((Y,K)\)} is a triple \((C,J_{K},J)\), where
\begin{enumerate}
    \item \(C\) is a bigraded \(\Z[U,V]\) complex that splits as \(\bigoplus_{\tf \in \SpincX{Y}}(C,\tf)\) with \(U\) having grading shift \((-2,0)\) and \(V\) having grading shift \((0,-2)\). \added{The bigrading is absolutely \(\Q\times \Q\) graded and each individual \((C,\tf)\) is relatively \(\Z\times \Z\) graded.}
    \item A map \(J_K:C \to C\), which is skew-\(\Z[U,V]\) linear, i.e. swaps the actions of \(U\) and \(V\). It should send \((C,\tf)\) to \((C,\overline{\tf+\PD[K]})\), and \(J_{K}^2\) is chain homotopic to the identity.
    \item A \(\Z[U,V]\)-linear map \(\Ii:(C,\tf)\otimes{\Z[U,V]/(V-1)}\to (C,\overline{\tf})\otimes{\Z[U,V]/(V-1)}\). Further, \(\Ii^2\) is chain homotopic to the identity.
\end{enumerate}
We will consider two complete packages \((C_1,J_{K,1},J_1)\) and \((C_2,J_{K,2},J_2)\) equivalent if there exists a chain homotopy equivalence \(f:C_1\to C_2\) that respects the gradings and the \Spinc structures, commutes with \(J_{v_0}\) up to homotopy and, once included into the tensor with \(\Z[U,V]/(V-1)\), commutes with \(J\) up to homotopy. \added{Complete packages for a knot \((Y,K)\) form a simplicial category.}
\end{definition}

\added{In the context of a graph knot where our knot is denoted by an unweighted vertex \(v_0\), we will refer to \(J_K\) as \(\Jj\).}

\begin{remark}
\added{This parallels the formal properties of knot Floer in relation to Heegaard Floer.
The maps \(J_K\) and \(\Ii\) are tracked as potentially related to the involutive maps \(\iota_K\) and \(\iota\), and together \(J_K\) and \(J\) recover the map \(\Gamma\) tracked by \OSS{} in their master complex \cite{knotsAndLattice}.}
However, the map \(\iota_K\), while conventionally called the knot involution, is not necessarily an involution and squares to the Sarkar map \cite{InvolSetup}.
Furthermore, even if for two knots \(K_1\) and \(K_2\) it is known to be an honest involution, this may become false when taking the connect sum, as the knot involution for \(K_1\# K_2\) is not merely \(\iota_{K_1}\otimes \iota_{K_2}\) but has an extra term coming from formal derivative maps on the component chain complexes \cite{ConnectSumInvol}.
However, the map \(J_{v_0}\) constructed for knot lattice which most closely appears to be \(\iota_K\)'s counterpart is always involution, informing our definition of a complete package here.
The exact relationship between these maps is not known, and for example, they may potentially agree for prime knots.
\end{remark}

In order to express a complete package in homotopy theoretic terms, we will be replacing our \(\Z[U,V]\) chain complex with a doubly-filtered topological space, where the action by \(U\) and \(V\) become inclusions on the filtration.
We will refer to a doubly filtered space with \(X_{\ast\ast}\) where the subspace at level \((m,n)\) is given by \(X_{m,n}\), and inclusions filter downward, so \(X_{m_1,n_1}\subseteq X_{m_2,n_2}\) if \(m_1\geq m_2\) and \(n_1\geq n_2\).
\begin{definition}
\added{The category \(\Filt_{[\Q\times\Q:2\Z\times 2\Z]}\) has objects doubly filtered spaces whose filtrations are indexed by a coset of \(2\Z \times 2\Z\) in \(\Q\times \Q\).
The nonempty morphism spaces are between doubly filtered spaces indexed by the same coset, in which case the morphisms spaces are given by the filtered maps between them.
Similarly \(\Filt_{[\Q:2\Z]}\) will have objects with filtrations indexed by cosets of \(2\Z\) in \(\Q\) and morphism spaces defined similarly.}
\end{definition}
\added{Note that \(\Filt_{[\Q\times\Q:2\Z\times 2\Z]}\) naturally forms a topologically and thus simplicially enriched category.
The following provide some useful functors on \(\Filt_{[\Q\times \Q:2\Z\times 2\Z]}\).}
\begin{definition}
The \emph{first and second single filtrations} are respectively the continuous functors \(p_{1,!}\) and \(p_{2,!}\) from \(\Filt_{[\Q\times \Q:2\Z\times 2\Z]}\) to \(\Filt_{[\Q:2\Z]}\) defined on a double filtered space \(X_{\ast\ast}\) as
\begin{align*}
p_{1,!}(X_{\ast\ast})_m &= \bigcup_{n}X_{m,n} \\
p_{2,!}(X_{\ast\ast})_n&=\bigcup_mX_{m,n}.
\end{align*}
Additionally
\[\sigma\colon \Filt_{[\Q\times \Q:2\Z\times 2\Z]} \to \Filt_{[\Q\times \Q:2\Z\times 2\Z]},\]
is the continuous functor so that \(\sigma(X_{\ast\ast})_{m,n}=X_{n,m}\).
A continuous function between two doubly filtered spaces \(X_{\ast\ast}\) to \(Y_{\ast\ast}\) is \emph{skew-filtered} if it is a doubly filtered map from \(X_{\ast\ast}\) to \(\sigma(Y_{\ast\ast})\).
Finally there is a continuous bifunctor
\[ \otimes\colon \Filt_{[\Q\times\Q:2\Z\times 2\Z]} \times \Filt_{[\Q\times\Q:2\Z\times 2\Z]}\to \Filt_{[\Q\times\Q:2\Z\times 2\Z]}\]
so that
\[(X_{\ast\ast}\otimes Y_{\ast\ast})_{m,n} = \bigcup_{\substack{m_1+m_2=m\\n_1+n_2=n}} X_{m_1,n_1}\times Y_{m_2,n_2}\]
\end{definition}
\added{If \(X_{\ast\ast}\) is expressed as the superlevel sets of a function \((h_U,h_V)\colon X\to \Q\times \Q\) then \(p_{1,!}\) remembers only \(h_U\), \(p_{2,!}\) only \(h_V\), \(\sigma\) swaps \(h_U\) and \(h_V\), and \(\otimes\) adds the height functions on \(X_{\ast\ast}\) and \(Y_{\ast\ast}\) from the individual coordinates on \(X\times Y\).
A continuous map \(f\colon X_{\ast\ast}\to Y_{\ast\ast}\) is filtered if and only if \(h_U(x)\leq h_U(f(x))\) for all \(x\in X\).}
\added{Similarly these work if instead of doubly filtered spaces one considers doubly filtered CW-complexes with cellular maps or simplicial/cubical sets and one gets simplicially enriched categories.
Note that filtratons on CW-complexes should respect the CW-strucutre, i.e. the CW-structure on an individual layer of the filtraiton comes from the CW-structure on the ambient space.
}

\begin{definition}
Given a knot \(K\) in a rational homology three-sphere \(Y\) a \emph{complete homotopy package for \((Y,K)\)} is a tuple \((X^\tf_{\ast\ast},J_K^{\tf,\nat},\Ii^{\tf,\nat})\), where \(\tf\) itself ranges across \(\SpincX{Y}\) where 
\begin{enumerate}
\item Each \(X_{\ast\ast}^{\tf}\) is a doubly filtered CW-complex, with the indexing filtration for each \(\tf\) potentially coming from a different coset of \(2\Z\times 2\Z\) in \(\Q\times \Q\).
\item Each \(J_K^{\tf,\nat}\) is a skew filtered cellular morphism in \(\Filt_{[\Q\times\Q:2\Z\times2\Z]}\)
\[\Jj^{\tf,\nat}:X^{\tf}_{\ast\ast}\to X^{\overline{\tf+\PD[K]}}_{\ast\ast}\]
with \(\left(\Jj^{\nat}\right)^2\) doubly-filtered homotopic to the identity.
\item the morphism in \(\Filt_{[\Q:2\Z]}\)
\[\Ii^{\tf,\nat}:p_{1,!}\left(X_{\ast\ast}^{\tf}\right)\to p_{1,!}\left(X^{\overline{\tf}}_{\ast\ast}\right)\]
is cellular, and it takes \(X_{\tf}\) to \(X_{\overline{\tf}}\) with \(\left(\Ii^{\nat}\right)^2\) singly-filtered homotopic to the identity.
\end{enumerate}
The complete packages associated to a knot \((Y,K)\) form a topological category with morphisms given by filtered maps that respect the decomposition by \Spinc structure and commute up to homotopy with both the \(J_K^{\tf,\nat}\) and the \(J^{\tf,\nat}\).
\end{definition}
We will drop the \(\tf\) subscript when discussing these constructions as a whole, treating it similarly to a coproduct, though technically the different cosets used on the different components mean it is not a true coproduct.



Given a complete homotopy package \((X_{\ast\ast}^{\tf},J_K^{\tf,\nat},\Ii^{\tf,\nat})\), we can recover a complete package as 
\begin{equation}
\left(\bigoplus_{m,n,\tf}C_{\bullet}\left(X_{m,n}^{\tf}\right),\shrpt{J_K^{\nat}},\shrpt{\Ii^{\nat}} \right)\label{shrptPackage},
\end{equation}
where  \(C_{\bullet}\) is the cellular chain complex, given a \(\Z[U,V]\) structure through persistence.
In particular, while \(\bigoplus_{m,n}C_{\bullet}\left(X_{m,n}^{\tf}\right)\) is naturally an abelian group, it gains the structure of a \(\Z[U,V]\) module by letting \(U\) act by the inclusion \(X_{m,n}^{\tf}\subseteq X_{m-2,n}^{\tf}\) and letting \(V\) act by the inclusion \(X_{m,n}\subseteq X_{m,n-2}\).
Elements of \(C_d\left(X_{m,n}\right)\) have the gradings \(\gr_U = d + m\) and \(\gr_V= x+n\).
\added{Furthermore, \(J_K^{\nat}\) being skew-filtered means that \(\shrpt{J_K^{\nat}}\) is skew-\(\Z[U,V]\) linear due to \(C_{\bullet}\) taking \(\sigma\) to the functor swapping the gradings \(\gr_U\) and \(\gr_V\) and the actions of \(U\) and \(V\).
Additionally \(C_{\bullet}\) takes the functor \(p_{1,!}\) to tensoring with \(\Z[U,V]/(V-1)\), allowing \(\Ii^{\nat}\) to be taken to \(\Ii\).}

We will talk about a doubly-filtered cellular construction \(Y^{\nat}\) \emph{realizing} a \(\Z[U,V]\)-linear construction \(Y\) if, following the format above, \(\shrpt{Y^{\nat}}\)  yields \(Y\).
We use the word construction loosely here as there are a number of different things we would like to pass back and forth between these two settings, and in each case we will use the idea of realization to signal our intent.
For example, the doubly filtered cellular space \(X_{m,n}\)  realizes the chain complex \(C\), the skew-filtered map \(\Jj^{\nat}\) realizes the map \(\Jj\), and the singly-filtered map \(\Ii^{\nat}\) realizes the map \(\Ii\) on \(C\) with coefficients in \(\Z[U,V,V^{-1}]\).
The complete homotopy package \((X,\{\Fc_{t_1,t_2}\},\Jj^{\nat},\Ii^{\nat})\) realizes the complete package in Equation (\ref{shrptPackage}).
More generally, we can talk about doubly-filtered continuous cellular maps \(f^{\nat}\) \emph{realizing} chain maps \(f\) if \(\shrpt{f^{\nat}}=f\) and doubly filtered cellular homotopies \(H^{\nat}\) \emph{realizing} chain homotopies \(H\) if \(\shrpt{H^{\nat}}=H\).
We will have that all the components of the proof of Theorem \ref{topInvarianceResult} realize the components of the proof of Theorem \ref{invarianceResult}.
\added{However, the cellular approximation theorem, means that unless we are trying to compare with a given map of chain complexes, we will not be forcing maps to be cellular as we go.}

\section{A Cubical Construction of Lattice Homology}\label{introToLatticeHom}

\subsection{The Lattice Chain Complex}

Lattice homology \(\HFb(Y,\tf)\) is an invariant for a rational homology link of singularity \(Y\) equipped with a \Spinc structure and computed using a plumbing resolution for the corresponding collection of singularities \((S,\{p_1,\ldots,p_n\})\).
We will generally reference this plumbing resolution as \(X\) and assume it has graph \(G\) with vertices \(v_1,\ldots, v_n\)  and that \(v_i\) has weight \(v_i^2\).
Since the information of \(X\) is often presented in the form of \(G\), we will often reference \(G\) directly and reference a vertex \(v_i\) as if it were also the corresponding homology class of a sphere in \(X\).
Because we are assuming \(Y\) is a rational homology sphere, all vertices represent spheres and \(G\) has no cycles.
N\'emethi introduced lattice homology in \cite{latticeCohomNormSurf} and expanded on it in \cite{twoExactSequences}.
We will primarily be drawing notation from \cite{knotsAndLattice} and the framework from \cite{latticeCohomNormSurf}.

Letting \(\tf\) be a \Spinc structure of \(Y\) represented as an orbit of characteristc cohomology classes \(\Char(G,\tf)\) under the action by \(H_2(X;\Z)\), \added{we will now define some auxilary information on the way to defining \(\CFb(G,\tf)\) as a chain complex.
This will start with the generators, then the gradings, before finally covering the differential.}

\subsubsection{Generators of the Lattice Chain Complex}

\added{
Let \(\Qc(G,\tf)\) denote elements ordered pairs \([K,E]\) where \(K\in \Char(G,\tf)\) and \(E\) is a subset of the vertices of \(G\), and let  \(\Qc_{k}(G,\tf)\) be the subset of \([K,E] \in \Qc(G,\tf)\) such that \(|E|=k\).
We will often denote \([K,\emptyset]\) simply by \(K\).
As a bit of foreshadowing, the elements of \(\Qc(G,\tf)\) will be called \emph{the cubes of} \(\Char(G,\tf)\) and \(\Qc_k(G,\tf)\) the \emph{\(k\)-dimensional cubes}.
As a \(\Z[U]\) module our complex \(\CFb(G,\tf)\) is generated by \(\Qc(G,\tf)\) with the \(\delta\) grading \(k\) submodule \(\CFb_k(G,\tf)\) generated by \(\Qc_k(G,\tf)\). 
The \(\delta\) grading will be one of two homological gradings.
}

\subsubsection{The Maslov Grading on the Lattice Chain Complex}\label{hUsubsec}
\added{
We will now define a height function \(h_U\colon \Qc \to \Q\) so that the image lies entirely in a single coset of \(2\Z\).
To start, define \(h_U\) on \(\Char(G,\tf)\) to be
\[ h_U(K):= \frac{K^2+s}{4}\]
where \(s\) is the rank of \(H^2(X_G;\Z)\).
This is chosen so that \(h(c_1(\sfk))\) calculates the grading shift of the Heegaard Floer map \(F_{W,\sfk}^-\colon HF^-(S^3,0)\to HF^-(Y,\tf)\) where \(W\) is the cobordism achieved by removing a ball from \(X\).
We will then choose the height function on the remaining cubes as
\begin{align*}
h_U([K,E]) &= \min\{h_U\left(K'\right)\, :\, I \subset E\}.
\end{align*}
On an orbit \(\Char(G,\tf)\), the values of \(h_U(K)\) will all be in the same coset of \(2\Z\) in \(\Q\), guaranteeing that \(a_v\) and \(b_v\) are always integers.
Furthermore, at this point we can verify that if \(c_1(\tf)=0\) then this coset is precisely \(2\Z\).
}

\added{
The role of the height function \(h_U\) will play a similar role to the weight system \(w_K\) in \cite{latticeCohomNormSurf} which is defined relative to a choice of \(K\in \Char(G,\tf)\) and with cubes defined in \(H_2(X_G;\Z)\).
In particular, we have the relation:
\[ w_K([l,E]) = \frac{h_U(K)- h_U([K+l,E])}{2}.\]
Much of the discussion of weight systems in \cite{latticeCohomNormSurf} can be carried over here with the changes that heights should go to a particular coset of \(2\Z\) in \(\Q\) instead of to \(\Z\) and we work with the opposite poset structure  due to the subtraction that occurs in the conversion.
}

\added{
One benefit of working with \(h_U\) instead of \(w_K\) is that \(h_U\) is defined absolutely rather than relatively and thus lets one recover the Maslov grading \(\mu\) in absolute rather than relative terms.
Specifically,
\[\mu(U^j[K,E]):= h_U([K,E]) + |E| + j\]
Here \(\mu\) is designed to match up with the Maslov grading on \(HF^-(Y,\tf)\) and thus is also called the Maslov grading, whereas there is no analogue in Heegaard Floer homology for \(\delta\).
}


\subsubsection{Differential on the Lattice  Chain Complex} \label{latticeBouSec}

Now that we have the generators of \(\CFb(G,\tf)\) we need to define the differential.
The decomposition of \(\R^s\) into cubes presents it as a cubical complex, and up to powers of \(U\) the differential on \(\CFb(G,\tf)\) is the differential induced on this chain complex.
Up to some \(\varepsilon_{E,v}\in\{1,-1\} \) to account for orientation conventions and some non-negative integers \(a_v[K,E]\)  and \(b_v[K,E]\) to account for \added{how } \(U\) \added{changes the maslov grading}, we have

\[ \bou[K,E] = \sum_{v\in E}\varepsilon_{E,v}\left(U^{a_v[K,E]}[K,E-\{v\}] - U^{b_v[K,E]}[K+v,E-\{v\}]\right). \]

One way to establish \(\varepsilon_{E,v}\) is that if
\[E=\{v_{i_1},v_{i_2},\ldots,v_{i_n}\}\]
with \(i_j<i_{j+1}\) then \(\varepsilon_{E,v_{i_j}}=(-1)^j\).
This choice of signs has the property that if \(u\neq v\) then  \(\varepsilon_{E,v}\varepsilon_{E-\{v\},u}=-\varepsilon_{E,u}\varepsilon_{E-\{u\},v}\), which is what is needed for \(\bou^2=0\).

\added{The following choice of \(a_v[K,E]\) and \(b_v[K,E]\) are equivalent to stipulating that our differential lowers the maslov grading by one.}
\begin{equation}
a_v[K,E] :=\frac{h_U([K,E-\{v\}])-h_U([K,E])}{2} \label{avForm}
\end{equation}
and
\begin{equation}
b_v[K,E] := \frac{h_U([K+v,E-\{v\}])-h_U([K,E])}{2}.\label{bvForm}
\end{equation}
Note that our construction of \(h_U\) ensures that both \(a_v\) and \(b_v\) are in \(2\Z_{\geq 0}\) as needed for our differential to be defined.

\added{
We will now go through a collection of formulas from \cite{knotsAndLattice} which are useful in computation. 
The focus of these calculations is towards understanding \(a_v[K,E]\) and \(b_v[K,E]\), and they make use of the fact that these values only depend on \(h_U\) up to a constant that may depend on \([K,E]\).
As such analyzing the behavior of \(\frac{h_U(\cdot) - h_U(K)}{2}\) over the vertices of \([K,E]\), \([K,E]\) itself, plus the front and back faces of \([K,E]\) in a direction \(v\) yields lots of relevant information.
We can then define the following values, with the first equality giving the definition as in \cite{knotsAndLattice} and the second equality translating the statement in terms of \(h_U\):
\begin{align*}
    f([K,I]) &:= \frac{\left(\sum_{v\in I} K(v)\right) + \left(\sum_{v\in I}v\right)^2 }{2} &&=\frac{ h_U\left(K + \sum_{v\in I}v\right) - h_U(K)}{2} \\
    g([K,E]) &:= \min\{f([K,I]) \, : \, I \subseteq E \} &&= \frac{h_U([K,E]) - h_U(K)}{2}\\
    A_v[K,E] &:= \min\{f([K,I]) \, : \, I \subseteq E - \{v\} \} &&= \frac{h_U([K, E - \{v\}]) - h_U(K)}{2}\\
    B_v[K,E] &:= \min\{f([K,I]) \, : \, v \in I \subseteq E \} &&= \frac{h_U([K+v,E-\{v\}]) - h_U(K)}{2}.
\end{align*}
The first two of these are called the \emph{\(G\)-weight} of \([K,I]\) and \emph{the minimal \(G\)-weight} of \([K,E]\) in \cite{knotsAndLattice}.
The first of these follows directly from the definition of \(h_U\) and from how we defined the action of the vertices on \(\Char(G,\tf)\) by \(K+v := K+ 2 \PD[v]\), since
\begin{align*}
\frac{ h_U\left(K + \sum_{v\in I}v\right) - h_U(K)}{2} &= \frac{\left(K+ \sum_{v\in I}v\right)^2 +s - K^2 -s}{8}\\
&= \frac{\left(K+ \sum_{v\in I}2\PD[v]\right)^2-K^2}{8} \\
&= \frac{\left(\sum_{v\in I} K(v)\right) + \left(\sum_{v\in I}v\right)^2 }{2} := f([K,I]).
\end{align*}
The rest of the formulas relating these values to \(h_U\) follow from this first obvservation.
Note that \cite{knotsAndLattice} then define \(a_v= A_v[K,E]-g([K,E])\) and \(b_v=B_v[K,E]-g([K,E])\) which by the above calculations reduces to our formulas for \(a_v\) and \(b_v\) given earlier.
We will define a new value \(\Delta_v\) by
\[\Delta_v[K,E] := \frac{h_U([K+v,E-\{v\}])-h_U([K,E-\{v\}])}{2},\]
which can be computed using \(\Delta_v[K,E] = B_v[K,E] - A_v[K,E]\).
}

\added{
The formula for the \(G\)-weight in terms of \(h_U\) holds using a direct computation of the right hand side using the intersection form and the definition of the action of \(H_2(X_G;\Z)\) on \(\Char(G)\).
Each of \(g\), \(A_v\), an \(B_v\) each then take minima over respectively the whole cube, the front face in direction \(v\), and the back face in direction \(v\), which by definition then allows the term \(h_U\left(K+\sum_{v\in I}v\right)\) to be replaced with \(h_U([K,E])\), \(h_U([K,E-\{v\}])\), and \(h_U([K+v,E-\{v\}])\).
From these, we can see that \(a_v[K,E] = A_v[K,E] - g([K,E])\) and \(b_v([K,E]) = B_v[K,E] - g([K,E])\), and in fact \cite{knotsAndLattice} uses this as the definition of \(a_v\) and \(b_v\).
}

The following proposition will be useful in calculations, since we may want to understand how \(h\) acts on a cube by looking at how it acts on opposing faces, and it follows relatively directly from the definition.
\begin{proposition}\label{spherehdif}
Let \(v\) be a vertex and \(E\) is a subset of the vertices with \(v\in E\), then for cubes \([K,E]\), we have that
\[ h_U([K,E]) = \min(h_U([K,E-\{v\}]),h_U([K+v,E-\{v\}])).\]
In particular, at least one of \(a_v[K,E]\) and \(b_v[K,E]\) must be zero.
\end{proposition}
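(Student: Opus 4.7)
The plan is to unpack the definition of $h_U$ on a cube and show that the minimum over the vertices of $[K,E]$ splits according to the two faces opposite in the direction $v$.

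First I would observe that, by definition, the integer (characteristic) lattice points contained in the cube $[K,E]$ are exactly the $2^{|E|}$ vertices of the form $K + \sum_{v_i \in I} v_i$ for $I \subseteq E$, and these are precisely the zero-dimensional subcubes of $[K,E]$. Hence
\[
h_U([K,E]) \;=\; \min\bigl\{\, h_U(K + \textstyle\sum_{v_i \in I} v_i)\,:\, I \subseteq E \,\bigr\}.
\]
Since $v \in E$, I partition the subsets $I \subseteq E$ according to whether $v \notin I$ or $v \in I$. The subsets with $v \notin I$ are exactly the subsets of $E-\{v\}$, so they produce the vertex set of the front face $[K, E-\{v\}]$. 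The subsets with $v \in I$ are of the form $\{v\} \cup J$ with $J \subseteq E-\{v\}$, and they produce the vertex set of the back face $[K+v, E-\{v\}]$. The main (and essentially only) step is then the elementary identity
\[
\min(A \cup B) \;=\; \min\bigl(\min A,\min B\bigr),
\]
applied to these two vertex sets, which yields the asserted formula
\[
h_U([K,E]) \;=\; \min\bigl(h_U([K,E-\{v\}]),\; h_U([K+v,E-\{v\}])\bigr).
\]

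For the last sentence, I would feed this identity directly into the defining formulas (\ref{avForm}) and (\ref{bvForm}). Whichever of the two face heights realizes the minimum forces the corresponding numerator to vanish, making either $a_v[K,E]=0$ or $b_v[K,E]=0$. There is no real obstacle here; the only thing to be slightly careful about is confirming that the minimum in the definition of $h_U$ really is taken over characteristic classes in the cube (equivalently, the vertices), so that the partition argument works cleanly — but this is immediate from the fact that $\Char(G,\tf) \subset \R^s$ is the integer lattice and the cube has unit edges.
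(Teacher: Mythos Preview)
Your argument is correct and is exactly the direct unpacking of the definition that the paper has in mind; the paper does not give a separate proof beyond remarking that the proposition ``follows relatively directly from the definition.'' Your partition of the vertex set of $[K,E]$ into the vertices of the front and back faces in direction $v$, together with $\min(A\cup B)=\min(\min A,\min B)$, is precisely that direct argument.
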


\begin{figure}
\centering

\begin{subfigure}[t]{.45\textwidth}

\includegraphics[width=\textwidth]{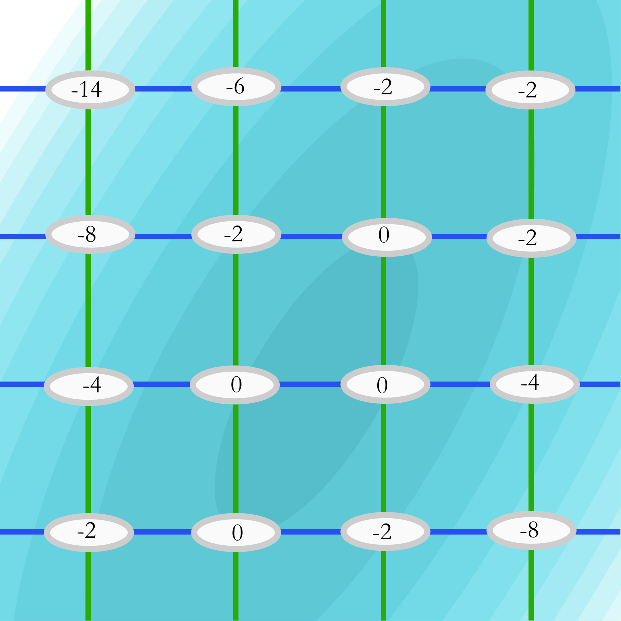}

\caption{The ellipses that generate the height function.}

\label{subfig:LatExample}
\end{subfigure}
\hfill
\begin{subfigure}[t]{.45\textwidth}
\includegraphics[width=\textwidth]{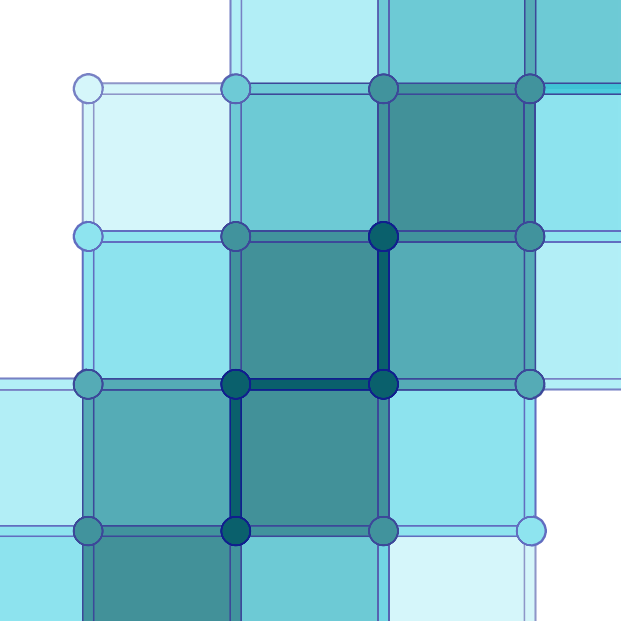}

\caption{The height function on the cubes themselves.}

\label{subfig:LatExampleCubed}
\end{subfigure}
\vspace{.3in}

\begin{subfigure}[t]{.5\textwidth}
\begin{center}
\includegraphics{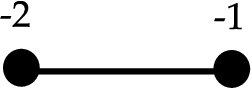}
\caption{The plumbing graph.}
\label{subfig:ExamplePlumbing}
\end{center}
\end{subfigure}

\caption{An example of the lattice homology filtration. Subfigure \ref{subfig:ExamplePlumbing} gives the plumbing for this example. Subfigure \ref{subfig:LatExample} gives the ellipses when using the quadtratic form \(\frac{K^2+2}{4}\) with the value of the height function labeled on the vertices, i.e. the characteristic cohomology classes. Subfigure \ref{subfig:LatExampleCubed} shows this height function discretized so that the height of a cube is the same height as its lowest vertex; this gives the filtration for lattice homology.
In order to make each figure readable on their own, the colors of the ellipses in Figure \ref{subfig:LatExample} are lighter than those for the filtration in figure \ref{subfig:LatExampleCubed}. However, the relative shadings to agree, and for example the darkest ellipse in Figure \ref{subfig:LatExample} and the darkest cubes in Figure \ref{subfig:LatExampleCubed} are both at height 0.
}
\label{fig:latticeExample}
\end{figure}

\begin{example}\label{ex:latticeEx}
Figure \ref{fig:latticeExample} provides an example of this calculation for the plumbing of \(S^3\) given in Figure \ref{subfig:ExamplePlumbing}.
Let \(v_1\) be the vertex with weight \(-1\) and \(v_2\) the vertex with weight \(-2\).
In the figure \(v_1\) is associated with the vertical direction and \(v_2\) with the horizontal direction.
The height function is shown on the characteristic cohomology classes, i.e. the vertices, where the upper right vertex with height 0 represents the characteristic class \(K_0\) such that \(K_0(v_1)= -1\) and \(K_0(v_2)=0\).
The background of Figure \ref{subfig:LatExample} shows the ellipses from the quadratic form \(\frac{K^2+2}{4}\).
Because we work with characteristic cohomology classes, squaring \(K\) based on how \(K\) evaluates on \(v_1\) and \(v_2\) requires looking at the inverse of the adjacency matrix for the plumbing graph.
\added{We have included the adjacency matrix \(\Gc\) for the plumbing graph and its inverse below from which one can use the upper left corner to calculate that \(K_0^2=-2\) and thus \(h_U(K_0)=0\).}
\[\Gc=\left[ \begin{array}{cc}
-1  & 1 \\
1 & -2
\end{array}\right]\quad
\Gc^{-1}= \left[ \begin{array}{cc}
-2 & -1 \\
-1 & -1
\end{array}\right]
\]

The background of Figure \ref{subfig:LatExampleCubed} colors the squares by where they sit in the filtration.
In order to make the figures individually readable, the colors in Figure \ref{subfig:LatExampleCubed} do not precisely match the ellipses in Subfigure \ref{subfig:LatExample}, but in both cases darker means the cube is higher.
One can see that the darkest squares in Figure \ref{subfig:LatExampleCubed} are precisely those that fit in the second smallest ellipse in Figure \ref{subfig:LatExample}.
There are no squares that fit inside the smallest ellipse from Figure \ref{subfig:LatExample}, and the filtration at level 0 gives a path of four vertices and three edges, which receive the darkest coloring in Figure \ref{subfig:LatExampleCubed}.
\end{example}


\subsection{The Filtered Homotopy Type}

One key observation of N\'{e}methi in \cite{latticeCohomNormSurf} is that a \(\Z[U]\)-complex \(\CFb(G,\tf)\) can be \added{interpreted as coming from a filtered cube decomposition for \(\R^s\), where \(s\) is the order of the graph \(G\), i.e. the rank of \(H_2(X_G;\Z)\).
The \(k\)-dimensional cubes are precisely the elements \([K,E]\) of \(\Qc_k(G,\tf)\).
The zero-dimensional cubes are the elements of \(\Char(G,\tf)\) themselves, and the one-dimensional cubes \([K,\{v_i\}]\) are edges connecting \(K\) and \(K+v_i\) where \(v_i\) is a base surface.
More generally for \(K \in \Char(G,\tf)\) and \(E\) a subset of the vertices of \(G\), we have that the cube \([K,E]\) is the convex hull of the points \(\{ K+\sum_{I \subset E}v_i\, : \, I \subseteq E\}\).
Here \(|E|\) is the dimension of the cube, so \([K,E] \in \Qc_{|E|}\).
We will say that if \(v \in E\) then the cube \([K,E]\) is \emph{supported in direction \(v\)}.
For each direction \(v\) that it is supported in, the cube \([K,E]\) has two faces: \([K,E-\{v\}]\), which we will call the \emph{front face of \([K,E]\) in the direction \(v\)}, and \([K +v,E-\{v\}]\), which we will call \emph{the back face of \([K,E]\) in the direction \(v\)}.}
\added{As mentioned in Section \ref{FloerSec}, we  can then define a filtered space from \(h_U\) using superlevel sets
\[X_{r} :=  \bigcup_{h_U([K,E])\geq r} [K,E] ,\]
which recovers the chain complex using cellular persistent homology.}
This approach suggests the following definition:

\begin{definition}
The \emph{lattice space \(\CFb^{\nat}(G,\tf)\)} associated to a plumbing and \Spinc structure \((G,\tf)\) is the data of the cube decomposition of Euclidean space equipped with the filtration induced by \(h_U\).
The \emph{lattice homotopy type} \(\CFb^{\nat}(Y,\tf)\) is the filtered homotopy type of a  given \(\CFb^{\nat}(G,\tf)\) where \((G,\tf)\) is a particular plumbing representation of \((Y,\tf)\).
\end{definition}


\subsection{Other Properties and Structure}

Ozsv\'{a}th, Stipsicz, and Szab\'{o} remarked upon the following equivalence in the chain complex setting \cite{knotsAndLattice}, and we now show that it can be used in the filtered setting as well.
\begin{proposition} \label{connectSums}
Let \(G_1\) and \(G_2\) be graphs and with \Spinc structures \(\tf_1\) and \(\tf_2\) respectively.
Then there exist canonical isomoprhisms 
\begin{align*}
\varphi\colon \CFb(G_1,\tf_1)\otimes \CFb(G_2,\tf_2)& \to \CFb(G_1\sqcup G_2,\tf_1\#\tf_2)\\
\varphi^{\nat}\colon \CFb^{\nat}(G_1,\tf_1)\otimes \CFb^{\nat}(G_2,\tf_2) &\to \CFb^{\nat}(G_1\sqcup G_2,\tf_1\#\tf_2),
\end{align*}
where on filtered spaces \(\otimes\) is constructed analogously to the functor described in Section \ref{FloerSec}.
\end{proposition}
\begin{proof}
Define
\[\varphi^{\infty}\colon \CFb^{\infty}(G_1,\tf_1)\otimes \CFb^{\infty}(G_2,\tf_2)\to \CFb^{\infty}(G_1\sqcup G_2,\tf_1\#\tf_2)\]
by 
\[
\varphi^{\infty}(\left([K_1,E_1]\otimes [K_2,E_2]\right):=U^{\zeta}[(K_1,K_2),E_1\sqcup E_2].
\]
This map is a bijection on the generating sets, and an inspection of the differential reveals it to be a chain map, so we will get desired map \(\varphi\) so long as the filtrations agree. 
This suggests a cellular homeomorphism \(\varphi^{\nat}\) which takes the points in \([K_1,E_1]\times [K_2,E_2]\) linearly to the corresponding point in \([(K_1,K_2),E_1\sqcup E_2]\).
\added{Now, we would like to check that this map preserves height functions, and in particular if \(h_{U,1}\) and \(h_{U,2}\) are the height functions for \((G_1,\tf_1)\) and \((G_2,\tf_2)\) respectively then,
\begin{equation}
h_U\left(\varphi\left([K_1,E_1]\otimes [K_2,E_2]\right)\right) = h_{U,1}([K_1,E_1]) +h_{U,2}([K_2,E_2]) \label{heightSum}
\end{equation}
}

We have \(h_U\) satisfies equation (\ref{heightSum}) on the vertices, since the intersection form on \(X_{G_1\sqcup G_2}\) is the direct sum of the intersection forms on \(X_{G_1}\) and \(X_{G_2}\).
Now for \([(K_1,K_2),E_1\sqcup E_2]\) we have that its vertices are \((K_1',K_2')\) where \(K_1'\) is a vertex of \([K_1,E_1]\) and \(K_2'\) is a vertex of \([K_2,E_2]\). 
So,
\begin{align*}
h_U([(K_1,K_2),E_1\sqcup E_2]) &= \min\{h_U((K_1',K_2'))\, |\, K_1'\in [K_1,E_1],\, K_2'\in [K_2,E_2]\}\\
&=\min\{h_{U,1}(K_1')+h_{U,2}(K_2')\, |\, K_1'\in [K_1,E_1],\, K_2'\in [K_2,E_2]\}\\
&= \min\{h_{U,1}(K_1')\,|\, K_1'\in [K_1,E_1]\} \\
&\quad\quad\quad\quad + \min\{h_{U,2}(K_2') \, |\, K_2 \in [K_2,E_2]\} \\
&= h_{U,1}([K_1,E_1]) + h_{U,2}([K_2,E_2])
\end{align*}
\end{proof}

\begin{remark}
This definition of \(\CFb(G,\tf)\) predisposes lattice homology \(\HFb(G,\tf)\) to many of the formal properties of Heegaard Floer homology.
It has an absolute \(\Q\) grading for torsion \Spinc structures with a relative \(\Z\) grading.
Furthermore taking coefficients in \(\Z[U,U^{-1}]\) is equivalent to letting every cube appear in every level of the filtration, and thus because \(\R^s\) is contractible we would have that \(\HFb^{\infty}_0(G,\tf) \cong \Z[U,U^{-1}]\) and \(\HFb_k^{\infty}(G,\tf)\cong 0\) for \(k>0\).
The even and odd parts of \(\HFb(G,\tf)\) are respectively \(\bigoplus_{k}\HFb_{2k}(G,\tf)\) and \(\bigoplus_{k}\HFb_{2k+1}(G,\tf)\).
Proposition \ref{connectSums} mirrors how taking connect sums of three-manifolds yields tensor products on the chain level for Heegaard Floer Homology.
\end{remark}

Finally, define a filtered chain map
\[\Ii\colon \bigoplus_{\tf\in \SpincX{Y}} \CFb(G,\tf)\to \bigoplus_{\tf\in \SpincX{Y}}\CFb(G,\tf)\]
by
\[\Ii([K,E]) := (-1)^{|E|}\left[-K- \sum_{u \in E}u,E\right].\]
Observe that on the vertices \(K \in \Char(G)\), we have \(\Ii(K) = -K\).
Because our height function is based on a quadratic form,  \(h_U(\Ii(K)) = h_U(K)\).
Since, \(h_U([K,E])\) is calculated based on the value of \(h_U\) on the vertices the calculation is thus unchanged when considering \(\Ii([K,E])\).
Thus, \(h_U([K,E]) = h_U(J([K,E]))\), meaning their are no \(U\) powers needed in our definition.
One can model \(\Ii\)'s action on \(\CFb^{\nat}(G,\tf)\) by looking at an an affine translation of negation acting on \(\R^s\).
This enables us to define a 
\[\Ii^{\nat}: \CFb^{\nat}(G,\tf)\to \CFb^{\nat}(G,\overline{\tf})\]
such that \(\shrpt{\Ii^{\nat}}= \Ii\).
This construction gives the same map as \(\iota\) on Heegaard Floer homology for almost rational graphs, which are graphs which can be made into \(L\)-spaces by lowering the weight on a single vertex \cite{involCheck}.

\section{Knot Lattice Homology: A Second Filtration} \label{knotLatticeBackSec}


Let \(G_{v_0}\) be a plumbing graph representing a generalized algebraic knot in a rational homology sphere with an unweighted vertex \(v_0\), and we will define \(G:=G_{v_0}-v_0\) to talk about the graph representing the ambient three-manifold.
Let \(\tf\) be a \Spinc structure on the plumbed three-manifold defined by \(G\).
\added{We will first construct the knot lattice chain complex associated to \((G_{v_0},\tf)\), then the doubly filtered knot lattice space.}

\subsection{Knot Lattice Chain Complex}

\subsubsection{Generators and First Maslov Grading}
Let \(\CFKb(G_{v_0},\tf)\) be freely generated over \(\Z[U,V]\) by all the cubes \added{ \(\Qc(G,\tf)\).}
As with lattice homology, we have a homological grading \(\delta\), where \(\delta\) the grading \(k\) portion \(\CFKb_{k}(G_{v_0},\tf)\) is freely generated by the elements of \(\Qc_{k}\).
\added{Additionally, the height function \(h_U\) from Section \ref{hUsubsec} carries over as well, and the first Maslov grading \(\gr_U\) is defined using the same formula as in Section \ref{hUsubsec} with the action of \(V\) preserving \(\gr_U\).}

\subsubsection{The Second Maslov Grading and Alexander Grading}

Recall from Section \ref{subsubsecgenalglinks} that the vertex \(v_0\) represents a disk in the plumbing resolution \(X\), in particular the boundary connect sum of the fibers of vertices adjacent to \(v_0\).
As a homology class in \(X\) relative its boundary \(Y\), the disk \(v_0\) can act on elements of \(\Char(G)\) as \(K+v_0 := K+ 2\PD(v_0)\), though it will only \added{preserve} orbits \(\Char(G,\tf)\) if the weak generalized algebraic knot was null-homologous in the ambient three-manifold, \added{i.e. a strong generalized algebraic knot.}
Note that this formula does not require a framing or weight on \(v_0\) as we are using the relative homology class of the disk rather than moving to a larger plumbing, and we view not having to use knowlege of a larger plumbing as a benefit of using this approach.
This action of \(v_0\) on \(\Char(G)\) can extend to an action on the cubes of \(\CFb(G)\) that translates the cubes in the direction given by \(v_0\), i.e. sending \([K,E]\) to \([K+v_0,E]\).
\added{Define a second height function \(h_V\) on the cubes by
\[h_V([K,E]) := h_U([K+v_0,E]),\]
and the second Maslov grading by
\[\gr_V(U^iV^j[K,E]) := h_V([K,E])+|E|-2j.\]}

\added{Together the two Maslov gradings induce an Alexander grading
\[A := \frac{\gr_U - \gr_V}{2} = \frac{h_U - h_V}{2}.\]}

We will now confirm that the formulation above and that given in \cite{knotsAndLattice} give the same result.
To define the Alexander grading of an element \([K,E]\), the authors of \cite{knotsAndLattice} first consider a graph \(G_{v_0}(m_0)\), where the vertex \(v_0\) is given the weight \(m_0\).
The value of \(m_0\) will wash out in our calculations, but we can assume it is chosen so \(G_{v_0}(m_0)\) is negative definite. 
This process caps off the disk \(v_0\) to a sphere.
Furthermore, \(G\) being negative definite means that \(\PD(v_0) = \PD(\Sigma_0)\), where \(\Sigma_0\) is a second homology class in the plumbing \(X\) associated to \(G\) potentially with rational coefficients.
Consider then \(\Sigma := v_0 - \Sigma_0\) in the plumbing associated to \(G_{v_0}(m_0)\).
We will also consider an extension \(L\) of \(K\) in \(G_{v_0}(m_0)\) such that
\[L(v_0) = -m_0 +2g[K,E] - 2g[K+ v_0,E]. \]
The Alexander grading is then defined to be
\[ A([K,E]) = \frac{L(\Sigma) + \Sigma^2}{2}. \]

To show that this gives the same result as our own presentation of knot lattice homology, we only need to show that this definition of the Alexander grading yields the same result as looking at half the difference of the two Maslov gradings.
In particular, letting \added{\(\Gc\) represent the adjacency matrix of the graph \(G\)} and \(v_0^{\ast}\) represent \(\PD[v_0]\) in \(H^2(X;\Z)\), we have that \(\Gc^{-1}v_0=\Sigma_0\), and
\[\Sigma^2=m_0-2\lrangle{v_0,\Gc^{-1}v_0^{\ast}} + (\Gc^{-1}v_0^{\ast})^2 = m_0 -(\Gc^{-1}v_0^{\ast})^2.\]
Therefore,
\begin{align*}
    A[K,E] &= \frac{L_{[K,E]}(\Sigma) + \Sigma^2}{2}\\
    &=\frac{L_{[K,E]}(v_0) - L_{[K,E]}(\Gc^{-1}v_0^{\ast}) + m_0 -(\Gc^{-1}v_0^{\ast})^2 }{2}\\
    &= \frac{g[K,E]-g[K+v_0,E] -K(\Gc^{-1}v_0^{\ast}) - (\Gc^{-1}v_0^{\ast})^2 }{2} \\
    &=\frac{g[K,E]-g[K+v_0,E] -\lrangle{v_0^{\ast},K}_{\Gc^{-1}} - \lrangle{v_0^{\ast},v_0^{\ast}}_{\Gc^{-1}}^2}{2}\\
    &=\frac{g[K,E] - g[K+v_0,E]}{2} - \frac{K^2 - K^2 -4\lrangle{v_0^{\ast},K}_{\Gc^{-1}} -4 \lrangle{v_0^{\ast},v_0^{\ast}}_{\Gc^{-1}}}{8}\\
    &=\frac{1}{2}\left(\frac{K^2+s}{4} +g[K,E] -\frac{(K+v_0)^2 +s}{4} - g[K+v_0,E] \right)\\
    &= \frac{1}{2}(h_U[K,E]) - h_V([K,E])).
\end{align*}

\subsubsection{Differential on the Knot Lattice Chain Complex}

\added{The differential on the knot lattice chain complex will parallel that of the lattice chain complex discussed in Section \ref{latticeBouSec} but with extra \(V\) powers inserted to keep track of \(\gr_V\).}
We define
\begin{align*}
    \bou [K,E] := &\sum_{v\in E} \varepsilon_{E,v}U^{a_v[K,E]}V^{c_v[K,E]}[K,E-\{v\}] \\
    -&\sum_{v\in E}\varepsilon_{E,v}U^{b_v[K,E]}V^{d_v[K,E]}[K+v,E-\{v\}],
\end{align*}
where \(a_v\) and \(b_v\) are exactly as in Equations (\ref{avForm}) and (\ref{bvForm}), while \(c_v\) and \(d_v\) are defined similarly but with \(h_V\) instead of \(h_U\).
In particular, \(c_v[K,E]=a_v[K+v_0,E]\) and \(d_v[K,E]\) = \(b_v[K+v_0,E]\).
The same arguments from Section \ref{introToLatticeHom} ensure that \(a_v,b_v,c_v\) and \(d_v\) are all non-negative integers.
 
\subsection{The Doubly Filtered Space}


Further paralleling the discussion in Subsection \ref{introToLatticeHom}, each height function independently gives us a filtration of superlevel sets, which are cubical subcomplexes.
Considered together they form a double-filtration using superlevel sets
\[X_{m,n} = \Span\{ [K,E]\,:\, h_U([K,E])\geq m \mbox{ and } h_V([K,E])\geq n \}.\]
As such, the \(\Z[U,V]\) module can be recovered using the persistence chain complex.

This construction suggests the following definition for a corresponding doubly-filtered topological space.
\begin{definition}
The \emph{knot lattice space} \(\CFKb^{\nat}(G_{v_0},\tf)\) associated to a plumbing with unweighted vertex \(G_{v_0}\) and \Spinc structure \(\tf\) is Euclidean space with the cube decomposition above and the double filtration induced by \(h_U\) and \(h_V\).
\end{definition}
Many of the conventions mentioned in Section \ref{introToLatticeHom} carry over here as \(\CFKb^{\nat}(G_{v_0},\tf)\) is \(\CFb^{\nat}(G,\tf)\) with another filtration added.
\added{In particular \(p_{1,!}(\CFKb^{\nat}(G_{v_0},\tf)) =\CFb(G,\tf)\) and the \(J\) defined in section \ref{introToLatticeHom} can be the map \(J\) in the definition of a complete homotopy package for our knot.}

\begin{example}
Figures \ref{fig:knotLatticeExample} and \ref{fig:knotLatticeExample2} provide examples of this construction.
Both are presented similarly to Figure \ref{fig:latticeExample} with one subfigure showing the corresponding ellipses and two others showing the discretized version of the height functions on the cubes themselves.
However, this time there are two sets of ellipses, one in blue representing \(h_U\) and one in magenta representing \(h_V\).
\added{The filtration induced by \(h_U\) is reproduced for convenience, and a new subfigures added including he filtration induced by \(h_V\) again with higher heights depicted with darker colors.}
Note that because our vertices are cohomology classes, but the arrangement of the integer lattice is based off of an action by the second homology, to calculate the translation associated to \(v_0\) one looks at the dual basis to the basis given by the base surfaces.
This can be calculated using the inverse of the matrix for the intersection form.
Having multiple examples is useful in and of itself, but it will come up again in Section \ref{ContractionSec}, where we will see how the choice of knot can affect our calculations.
\end{example}

\begin{figure}
\centering
\begin{subfigure}[b]{.45\textwidth}

\includegraphics[width=\textwidth]{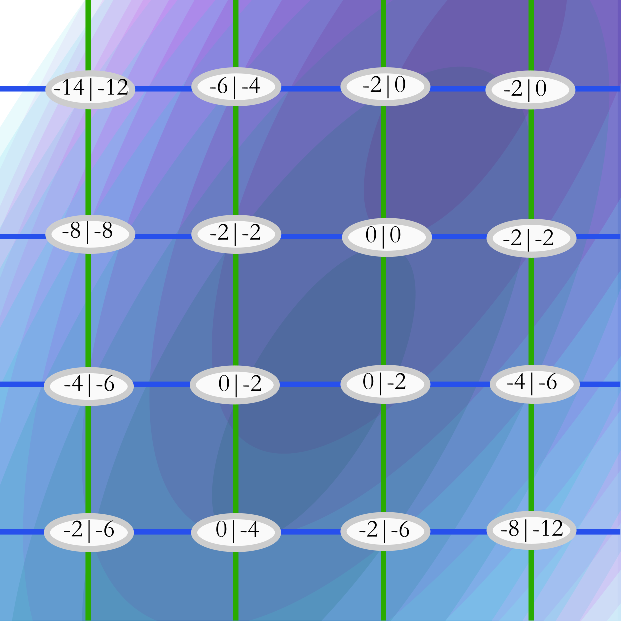}

\caption{The height function as ellipses}
\label{subfig:knotLatExample}
\end{subfigure}
\hfill
\begin{subfigure}[b]{.45\textwidth}
\includegraphics[width=\textwidth]{updatedLatticeExamplesLat.eps}
\caption{The first height function on the cubes}
\label{subfig:knotLatticeExampleCubed}
\end{subfigure}

\begin{subfigure}[b]{.45\textwidth}
\includegraphics[width=\textwidth]{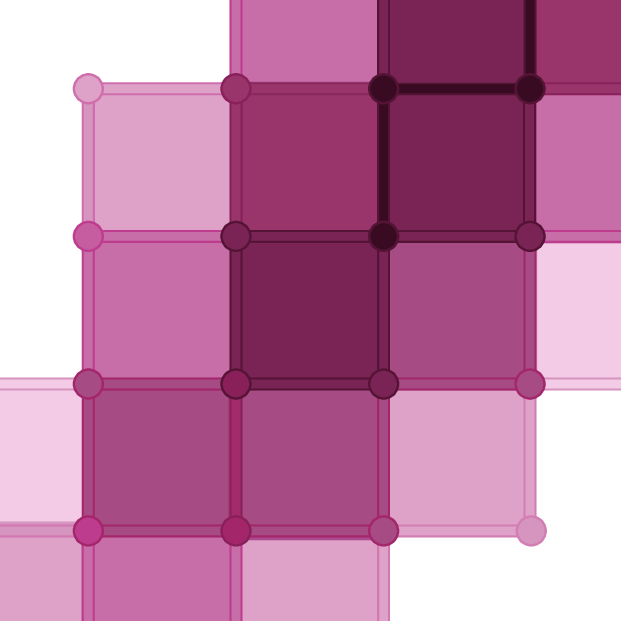}
\caption{The second height function on the cubes}
\label{subfig:knotLatticeExampleCubedRed}
\end{subfigure}
\vspace{.3in}

\begin{subfigure}[b]{.6\textwidth}
\begin{center}
\includegraphics{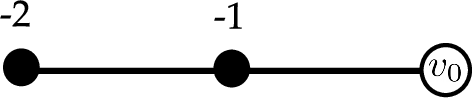}
\caption{The plumbing graph used in this examples}
\label{subfig:ExamplePlumbingKnot}
\end{center}

\end{subfigure}
\caption{A figure illustrating an example of the computation of knot lattice homology. Figure \ref{subfig:ExamplePlumbingKnot} gives the plumbing and knot used for this example. Figure \ref{subfig:knotLatExample} shows both  the original ellipses used to create this example with \(h_U\) in blue and the translated ellipses created using \(h_V\) in magenta.  These are done with transparency so that darker ellipses have higher values on the height functions and more magenta means \(h_V\) is higher and more blue means \(h_U\) is higher. Inside the vertices of Figure \ref{subfig:knotLatExample} are the values \((h_U,h_V)\). Figure \ref{subfig:knotLatticeExampleCubed} reproduces the first height function, while \ref{subfig:knotLatticeExampleCubedRed} provides the discritized second height function. Heights of less than -14 were rendered as white.}
\label{fig:knotLatticeExample}
\end{figure}

\begin{figure}
\centering
\begin{subfigure}[b]{.45\textwidth}
\includegraphics[width=\textwidth]{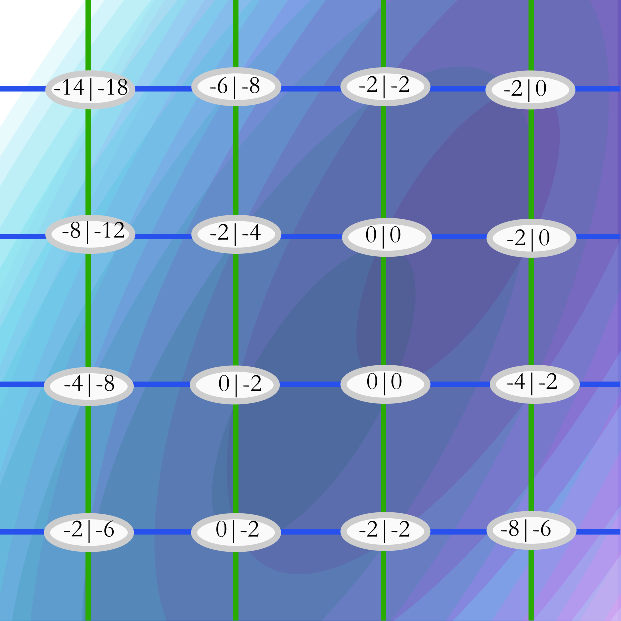}
\caption{The height function as ellipses}
\label{subfig:knotLatExample2}
\end{subfigure}
\hfill
\begin{subfigure}[b]{.45\textwidth}
\includegraphics[width=\textwidth]{updatedLatticeExamplesLat.eps}
\caption{The first height function on the cubes}
\label{subfig:knotLatExampleCubed2}
\end{subfigure}

\begin{subfigure}[b]{.45\textwidth}
\includegraphics[width=\textwidth]{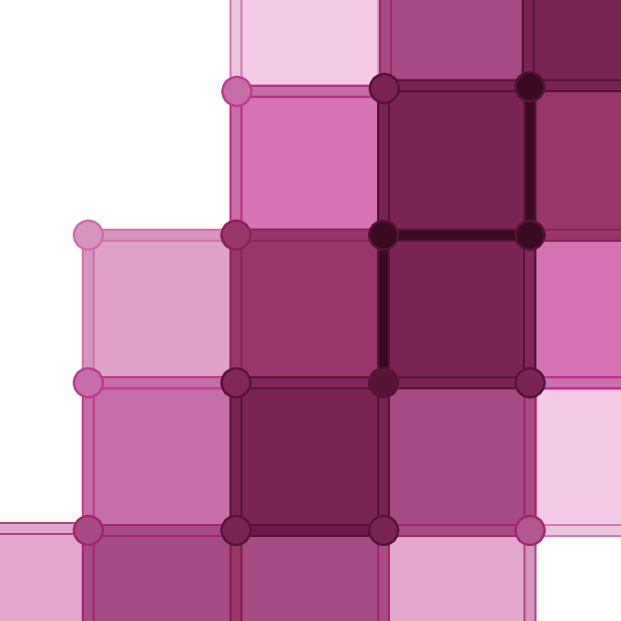}
\caption{The first height function on the cubes}
\label{subfig:knotLatExampleCubed2Red}
\end{subfigure}

\vspace{.3in}

\begin{subfigure}{.6\textwidth}
\includegraphics{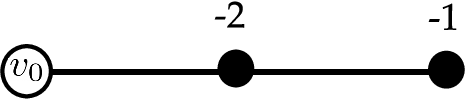}
\caption{The plumbing graph used in this examples}
\label{subfig:ExamplePlumbingKnot2}
\end{subfigure}
\caption{Another example of the double filtration for the knot lattice complex. This example has the same three-manifold as Figure \ref{fig:knotLatticeExample} but use a different unweighted vertex, as shown in \ref{subfig:ExamplePlumbingKnot2}. See Figure \ref{fig:knotLatticeExample} for a discussion of how these figures were constructed.}
\label{fig:knotLatticeExample2}
\end{figure}

\subsection{Other Properties and Structure}

The following proposition mirrors Proposition \ref{connectSums} in Subsection \ref{introToLatticeHom} and was also noticed by Ozsv\'ath, Stipsicz, and Szab\'o in the chain complex setting \cite{knotsAndLattice}.
\begin{proposition}\label{ConnectSumsKnot}
Let \(G_{1,v_0}\) and \(G_{2,v_0}\) be negative definite graphs with an unweighted vertex, with \Spinc structures \(\tf_1\) and \(\tf_2\) respectively.
Let \(G_{1,v_0}\# G_{2,v_0}\) be the graph which is the disjoint union of the graphs merged at \(v_0\).  
Then there exist canonical isomoprhisms, \added{where \(\otimes\) on doubly filtered spaces is the functor described in Section \ref{FloerSec}.}
\begin{align*}
\varphi\colon \CFb(G_{1,v_0}\tf_1)\otimes \CFb(G_{2,v_0},\tf_2)& \to \CFb(G_{1,v_0}\# G_{2,v_0},\tf_1\#\tf_2)\\
\varphi^{\nat}\colon \CFb^{\nat}(G_{1,v_0},\tf_1)\otimes \CFb^{\nat}(G_{2,v_0},\tf_2) &\to \CFb^{\nat}(G_{1,v_0}\# G_2,\tf_1\#\tf_2).
\end{align*}
\end{proposition}
\begin{proof}
Assume\(h_{U,1}\) and \(h_{V,1}\) are the height functions for \((G_1,\tf_1)\), while  \(h_{U,2},\) and \(h_{V,2} \) are the height functions on \((G_2,\tf_2)\), then we would like to show that
\begin{align}
h_U\left(\varphi\left([K_1,E_1]\otimes [K_2,E_2]\right)\right) &= h_{U,1}([K_1,E_1]) +h_{U,2}([K_2,E_2]) \label{heightSumU}\\
h_V\left(\varphi\left([K_1,E_1]\otimes [K_2,E_2]\right)\right) &= h_{V,1}([K_1,E_1]) +h_{V,2}([K_2,E_2]) \label{heightSumV}
\end{align}
The maps \(\varphi\) and \(\varphi^{\nat}\) follow the same construction as in Proposition \ref{connectSums}, and Equation (\ref{heightSumU}) becomes Equation (\ref{heightSum}) proved in that context.
What remains to be shown is Equation (\ref{heightSumV}). 
If \((K_1,K_2)\) is in \(\Char(G_{1,v_0}\# G_{2,v_0})\), then \( (K_1,K_2)+v_0\) = \((K_1+v_0,K_2+v_0)\) where each \(v_0\) is interpreted to be the unweighted vertex in the appropriate setting.
Then for a given \([(K_1,K_2),E_1\sqcup E_2]\) with \(E_1\) a subset of the vertices of \(G_{1,v_0}\) and \(E_2\) a subset of \(G_{2,v_0}\),
\begin{align*}
h_V([(K_1,K_2),E_1\sqcup E_2]) &= h_U([(K_1,K_2)+v_0,E_1\sqcup E_2]) \\
&= h_U([(K_1+v_0,K_2+v_0),E_1\sqcup E_2]) \\
&=h_{U,1}([K_1+v_0,E_1]) + h_{U,2}([K_2+v_0,E_2])\\
&= h_{V,1}([K_1,E_1]) + h_{V,2}([K_2,E_2]),
\end{align*}
which confirms equation (\ref{heightSumV}).
\end{proof}

Finally define
\[\Jj\colon \CFKb(G_{v_0},\tf) \to \CFKb(G_{v_0},\overline{\tf+\PD(v_0)})\]
on the generators by
\[\Jj([K,E]) = (-1)^{|E|}\left[-K -v_0 - \sum_{v \in E}v,E\right].\]
The map \(\Jj\) can be thought of geometrically via negation and translation (potentially between orbits \(\Char(G,\tf)\)).
Note that this can be realized in a continuous manner as
\[\Jj^{\nat}\colon \CFKb^{\nat}(G_{v_0},\tf) \to \CFKb^{\nat}(G_{v_0},\overline{\tf+\PD(v_0)}),\]
similar to \(\Ii\).
We can check that \(\Jj\) is skew \(\Z[U,V]\)-linear by checking that it is skew filtered. 
Thus observe that
\begin{align*}
    h_V\left(\Jj([K,E])\right) &= h_V\left(\left[-K-v_0 -\sum_{v \in E}v,E\right] \right)\\
    &= h_U\left(\left[-K - \sum_{v \in E}v,E\right]\right) \\
    &= h_U\left(\Ii[K,E]\right) \\
    &= h_U([K,E]) \\
    h_U\left(\Jj([K,E]) \right) &= h_U\left(\left[-K-v_0 - \sum_{v\in E}v,E\right]\right)\\
    &=h_U\left(\Ii([K+v_0,E]\right)\\
    &=h_U\left([K+v_0,E\right)\\
    &=h_V([K,E]).
\end{align*}


The definitions below summarize the constructions above.
Note that \(\CFKb^{\nat}(G_{v_0},\tf)\) contains both the information of the topological space and  the double filtration.
\begin{definition}
The \emph{complete knot lattice homology package} for the generalized algebraic knot \((Y,K)\) is the complete package 
\[\left(\bigoplus_{\tf \in \SpincX{Y}}\CFKb(G_{v_0},\tf),\Jj,\Ii\right),\]
while the \emph{complete knot lattice homotopy package} for \((Y,K)\) is the complete homotopy package
\[\left(\bigsqcup_{\tf \in \SpincX{Y}}\CFKb^{\nat}(G_{v_0},\tf),\Jj^{\nat},\Ii^{\nat}\right).\]
\end{definition}

\section{Contractable Spaces of Maps} \label{ContractionSec}

Before proving invariance and naturality, we will discuss the idea of homotopy colimits, which will subsume the approaches of quasi-fibrations used by N\'emethi \cite{anLattice, normalSurfSingBook} and contractions used by \OSS{} \cite{knotsNLatticeUnpub}.
Furthermore, while this framework subsumes the previous to approaches, it provides a more powerful way to discuss the ways in which we are identifying the different presentations of knot lattice, so that we can not only guarantee that a homotopy equivalence exists but to identify a contractible space of such maps.
This approach will also be compatible with the reduction theorems for lattice homology such as that in \cite{reductionTheorem, anLattice}.
It will also be reminiscent of the surgery formula from \cite{knotsAndLattice}.

The role homotopy colimits play here is as a way describing our space as constructed in terms of simpler parts so that we can identify the specific maps that play well with respect to that decomposition.
In particular, our goal in this section will be to provide such a deconstruction for every weighted vertex of our graph so that each piece of the deconstruction is homotopy equivalent to a filtered point, a property which we will call being \emph{subcontractible}.
The name subcontractible comes from doubly filtered points \(p\) being subterminal in doubly filtrered spaces; given a doubly filtered space \(Y\), there is at most one doubly filtered map from  \(Y\) to \(p\), but such a map might not always exist.
Similarly, the space of doubly filtered maps to a subcontractible space will be either be contractible or empty.

Subsection \ref{subsec:hocolim} provides an review of homotopy colimits and how they can be used to identify a particular nice space of maps between two homotopy colimits.
More can be found on the construction and use of homotopy colimits in \cite{catHomoTheory,primerHocolim}.
We mention conditions that guarantee which said space is contractible.
These descriptions will produce a diagram of doubly-filtered spaces and identifying the desired space of maps will be the homotopically analogous to when working with two colimits identifying specifically the maps which come from natural transformations of the underlying diagrams.
Subsection \ref{subsec:reducVertex} will show that \(\CFKb(G_{v_0},\tf)\) has such a description as a homotopy colimit so that each object in the diagram describes an orbit of \(v\) and how this orbit can be collapsed in a way guaranteeing the contractible space of maps from Subsection \ref{subsec:hocolim}.
The end of this section compares its methods to those in \cite{anLattice, normalSurfSingBook, knotsNLatticeUnpub, otherInvariance}.
In particular, this includes highlighting how homotopy colimits relate to quasi-fibrations of \cite{anLattice, normalSurfSingBook} and the contraction maps of \cite{knotsNLatticeUnpub} but also exploring generalizations of the contraction maps highlighted by the homotopy colimit approach and that such generalizations are needed.

\subsection{Homotopy colimits} \label{subsec:hocolim}

\begin{definition}
Given a small category \(D\) and functor \(F:D\to M\) for some simplicially enriched and tensored category \(M\), the \emph{homotopy colimit} of \(F\) is 
\[\hocolim F:=\coeq\left(\coprod_{\substack{\vec{d}\colon [n]\to D \\ i\colon[m]\to [n]}} \Delta^m \otimes F(\vec{d}(0))\rightrightarrows \coprod_{\vec{d}\colon [n]\to D} \Delta^n\otimes F(\vec{d}(0))\right),  \]
where \([n]\) is a categorical \(n\) simplex \(0\to 1\to ... \to n\) and \(\Delta^n\) is the \(n\)-simplex.
The two maps of the coequalizer come from considering the effect of \(i\) on \(\Delta^m\) versus considering the effect of \(i^{\ast}\) on \(F(\vec{d}(0))\)
The \emph{left deformation} of \(F\) is the diagram \(B(D,D,F)\) so that for \(d\in D\), letting \(\eta_d:D/d\to D\) be the canonical map from the overcategory at \(d\) to \(D\), 
\[B(D,D,F)(d) = \hocolim_{D/d} \eta_d^{\ast}F.\]
\end{definition}

Homotopy colimits are specific example of a wider construction known as the two-sided bar construction \(B(G,D,F)\) which takes as input a covariant functor \(F\colon\to C\) and a contravariant functor \(G\colon D\to \sSet\), where \(B(\ast,D,F)\) recovers \(\hocolim F\) and \(B(D,D,F)(d):=B(D(-,d),D,F)\) the left deformation.
Note that the left deformation comes with a canonical natural transformation \(\eta\colon B(D,D,F)\to F\), which will in fact be a point-wise homotopy equivalence (see Definition 2.2.1 and Theorem 5.1.1 of \cite{catHomoTheory}). 
Furthermore, the left deformation produces the homotopy colimit via the colimit \(\colim_D B(D,D,F) \cong \hocolim_D F\).

Morally and most importantly to our work, homotopy colimits operate as colimits do except the condition on cocones and natural transformations commuting gets replaced by homotopy coherence, i.e. commuting up to specified homotopies.
For every string of \(n\) composable morphisms starting at \(d\) that one might need a homotopy for one has thickened \(F(d)\) by \(\Delta^n\) to provide that homotopy and the identifications from the coequalizer ensure that all of these homotopies are compatible with composition and the insertion of identities.
The following definition will not only provide a way of specifying such a homotoopy coherent natural transformation but collect them in to a space.

\begin{definition}
Let \(D\) be a small category and let \(F,G\colon D\to M\) be two functors from \(D\) to some simplicially enriched, tensored, and cotensored category \(M\).
Then, \emph{the space of homotopy coherent natural transformations} from \(F\) to \(G\) is
\[C(F,D,G) := \eq \left( \prod_{[n]\to D} G(d(n))^{F(d(0))\otimes \Delta^n} \rightrightarrows \prod_{\substack{[n]\to D\\ \eta\colon [m]\to [n]}} G(d(n))^{F(d(0))\Delta^m} \right),\] 
where the equalizer is taken over the maps induced by applying \(\eta\) to \(\Delta^m\) and by mapping \(G(d(\eta(m))^F(d\eta(0))\) to \(G(d(n))^{F(d(0))}\) via precomposition by \(F(d(0))\to F(d(\eta(0)))\) and post-composition from \(G(d(\eta(m)))\to G(d(n))\).
\end{definition}

As suggested by the preceding paragraph, we do indeed have
\[C(F,D,G) \cong M^D(B(D,D,F),D,G),\]
as noted in \cite{catHomoTheory} equation (7.7.5).
Additionally, \(C(F,D,G)\) is functorial, in particular using \(\left(M^D\right)^{op}\times M^D \to M\).
Just as the homotopy colimit is specification of a two-sided bar construction \(B(-,D-)\), the mapping space is an application of the two-sided co-bar construction \(C(-,D,-)\).
Furthermore, \cite{catHomoTheory} it is noted that \(B(-,D,-)\) respects weak equivalences for a simplicaly enriched model category \(M\) if the functors are objectwise cofibrant, and the dual statement is that \(C(-,D,-)\) preserves weak equivalences between functors with fibrant objects, and in particular pointwise homotopy equivalences between functors will be taken to homotopy equivalences.
Most imediately, we have that \(C(F,D,G)\) is weakly equivalent to \(C(F,D,B(D,D,G)) \cong M^D(B(D,D,F),B(D,D,G))\), i.e. the natural transformations between the left deformations of \(F\) and \(G\).
There is a natural map then to \(M(\hocolim F,\hocolim G)\).

We now move from general to review to a focus on showing that \(C(F,D,G)\) is contractible given some condition on the objects of \(G\).
The condition needed will involve first considering a generalization of contractibility.

\begin{definition}
An object \(Y\) in a category \(M\) is \emph{subterminal} if for every object \(X\) in \(M\), \(M(X,Y)\) has at most one morphism.
An object \(Y\) in a simplicially enriched model category \(M\) is \emph{subcontractible} if it is homotopy equivalent to a subterminal object \(X\), which we will call the \emph{associated subterminal to \(Y\)}.
\end{definition}

\begin{lemma}
The subterminal objects in \(\Filt_{[\Q\times\Q\colon 2\Z\times 2\Z]}\) are precisely the doubly filtered points.
\end{lemma}
\begin{proof}
Let \(Y_{\ast\ast}\) be a subterminal object in \(\Filt_{[\Q\times\Q\colon 2\Z\times 2\Z]}\).
Given a topological space \(X\), the doubly-filtered space \(G_{n,m}(X)\) which as height \((n,m)\) everywhere will have either a single doubly-filtered map to \(Y_{\ast\ast}\) or no maps to \(Y_{\ast\ast}\).
However, such maps are in bijection with maps in \(\Top(X,Y_{n,m})\).
If \(Y_{n,m}\) is nonempty, there will always exist at least one map that factors through a point in \(Y_{n,m}\).
However, in that case there will always be exactly one map, so \(Y_{n,m}\) is a point.
For any \((n,m)\in [q_1,q_2]\), \(Y_{n,m}\) is either a point or empty, then \(Y_{\ast\ast}\) must be a doubly-filtered point.
Conversely, given any doubly-filtered point there will be at most one map into it since all filtered continuous maps are also continuous maps.
\end{proof}

\begin{proposition}\label{prop:CFDGcontract}
Given a small category \(D\) and a simplicially enriched model category \(M\), a functor \(F\colon D\to M\), and a functor \(G\colon D \to M\) which is objectwise subcontractible, \(C(F,D,G)\) will be empty if there is any \(d\in D\) so that \(M(F(d),G(d))\) is empty, and otherwise \(C(F,D,G)\) will be contractible.
\end{proposition}
\begin{proof}
If there exists any \(d\in D\) so that \(M(F(d),G(d))\) is empty, then both products in the coequalizer defining \(C(F,D,G)\) will have an empty term and thus be empty themselves.
We will now assume that \(M(F(d),G(d))\) is non-empty for every \(d\in D\).
For \(d\in D\) let \(p_d\) be the associated subtermimal to \(G(d)\).
There is a unique functor \(H\colon D \to M\) so that \(H(d)=p_d\) and a unique natural transformation \(\epsilon \colon G\to H\).
Because \(\epsilon\) is a pointwise homotopy equivalence we have that it induces a homotopy equivalence \(C(F,D,G) \to C(F,D,H)\).
However, note that because \(H\) is objectwise subterminal and for every \(d\), \(M(F(d),G(d))\) and thus \(M(F(d),H(d))\) is nonempty then there must exist a unique natural transformation \(\eta\colon F\to H\), which is in fact a unique element of \(C(F,D,H)\) by the homotopy coherent mapping space's construction.
Hence, \(C(F,D,G)\) is contractible.
\end{proof}

\subsection{Decomposition and reduction}\label{subsec:reducVertex}

For this section, \(G_{v_0}\) will always be a negative definite forest with some weighted vertex \(v\) and a single unweighed vertex \(v_0\), and \(\tf \in \SpincX{Y_{G}}\).
Let \(I\) be a subset of the weighted vertices of \(G_{v_0}\).
We will now construct a category \(D_{G,I}\) whose objects are orbits \(\OrbI{K,E}\) of cubes \([K,E]\in \Qc_{G,\tf}\) not supported in the directions \(I\) and whose morphisms recognize when the cubes in one orbit are in the boundary of the cubes of another orbit.
In particular, given \(E_1\subseteq E_2\subseteq E\) we have a morphism
\[\OrbI{K,E}\to\OrbI{K+\sum_{u \in E_1}u,E\backslash E_2}.\]
For example if \(V\) is the entire, vertex set of \(G\) then \(D_{G,V}\) is a single object with the identity morphism.

Now associated to each orbit \(\OrbI{K,E}\) there is a cube complex \(F_1(\Orb{K,E})\) that picks out all the cubes \([K,E]\) in that orbit along with cubes \([K,E\cup I_1]\) for \(I_1\subseteq I\) that project onto that orbit when forgetting any support in directions \(I\).
Essentially, choosing a point \(x\) in the middle of the cube \([K,E]\), we can look at the filtered space with points \(x+\sum_{v\in I}t_vv\) for \(t\in \R\) rather than restricting to orbits under the integers.
These arrange together to form a diagram \(F_{1}\) of doubly filtered spaces of shape \(D_{G,I}\), as a morphism in \(D_{G,I}\) represents one orbit being in the boundary of another orbit, and moving to the boundary necessarily induces a doubly filtered map.

\begin{proposition}
As a doubly filtered space \(\hocolim F_{1} \cong \CFKb^{\nat}(G_{v_0},\tf)\), where \(F_1\) is the diagram on \(D_{G,I}\) discussed above.
\end{proposition}
\begin{proof}
Let \(\tilde{I}\) represent the simplicial complex with three vertices, \(0, \frac{1}{2},\) and \(1\), an edge starting at \(\frac{1}{2}\) and going to 0, and another edge starting at \(\frac{1}{2}\) and going to 1.
This is the subdivision of the standard simplicial complex for the interval.
We can thus model a cube \([K,E]\in \Qc_{G}\) as \(\tilde{I}^{|E\backslash I|}\times [0,1]^{|I\cap E|}\), which while homeomorphic to the standard cube has a different cell decomposition as \(\tilde{I}^{|E\backslash I|}\) inherits its cell structure as simplicial complex
Doing this for all cubes in \(\CFKb^{\nat}(G_{v_0},\tf)\) with the filtration information tracked makes \(\CFKb^{\nat}(G_{v_0},\tf)\) more recognizable to the homotopy colimit definition.
In particular for every nondegenerate map from \([n]\) into \(D_{G,I}\) starting at \(\OrbI{K,E}\) and the \(F_1\left(\OrbI{K,E}\right)\) associated to \([K,E\cup I_1]\) with \(I_1\subseteq I\) contributes a cell of shape \([K,I_1]\times \Delta^n\) at the height of \([K,E\cup I_1]\) to the homotopy colimit.
This cell can directly be identified with a cell in the decomposition of the cube \([K,E\cup I_1]\) described above.
See Figure \ref{fig:DGHocolimMatch} for a visual depiction of the subdivisions for different choices of \(I\).
\end{proof}

\begin{figure}
\centering
\begin{subfigure}[b]{.3\textwidth}
\includegraphics[width=\textwidth]{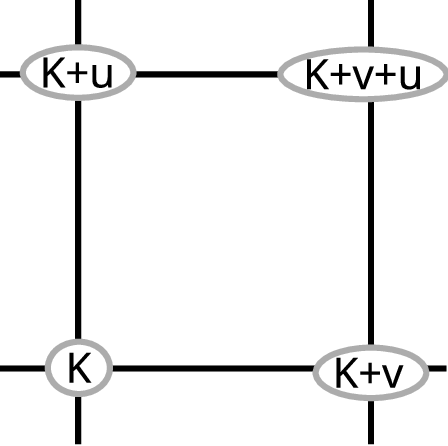}
\caption{The homotopy colimit associated to the functor \(F_1\) on \(D_{G,\{v_1,v_2\}}\).}\label{subfig:DGv1v2Hocolim}
\end{subfigure}
\hfill
\begin{subfigure}[b]{.3\textwidth}
\includegraphics[width=\textwidth]{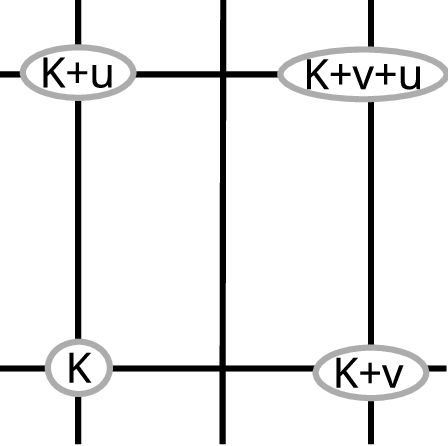}
\caption{The homotopy colimit associated to the functor \(F_1\) on \(D_{G,\{v_2\}}\).}\label{subfig:DGv2Hocolim}
\end{subfigure}
\hfill
\begin{subfigure}[b]{.3\textwidth}
\includegraphics[width=\textwidth]{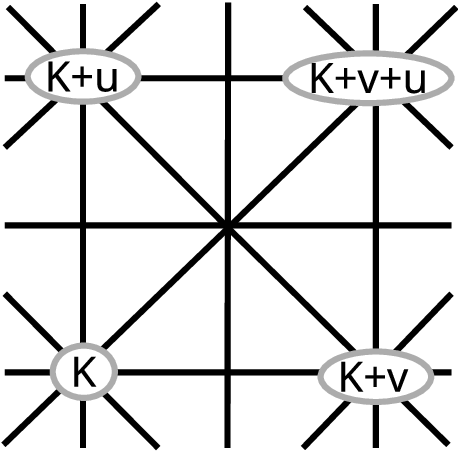}
\caption{The homotopy colimit associated to the functor \(F_1\) on \(D_{G,\{v_2\}}\).}\label{subfig:DGemptyHocolim}
\end{subfigure}
\caption{Figures depicting how the cell decomposition changes depending on whether you take the homotopy colimit of a functor sitting over \(D_{G, \{u,v\}}\), \(D_{G,\{v\}}\) or \(D_{G,\emptyset}\).
Here \(K\) is some charac
Here \(u\) represents the horizontal direction and \(v\) the vertical direction. If \(u\) and \(v\) are the only vertices of \(G\) then Figure \ref{subfig:DGv1v2Hocolim} shows that this recovers \(\CFKb^{\nat}(G_{v_0},\tf)\).
Meanwhile \(D_{G,\emptyset}\) meanwhile depicts a full subdivision of each cube, while \(D_{G,\{v\}}\) only subdivides the direction of \(u\), but when looking in the direction \(v\) the cubes remain the same.}\label{fig:DGHocolimMatch}
\end{figure}

We will now show that setting \(I=\{v\}\) for a single weighted vertex \(v\), the filtered space \(F_1(\Orb{K,E})\) is subcontractible for every \(\Orb{K,E}\).
To do so, we will show in Lemma \ref{concaveHeight} that along \(\Orb{K,E}\) the height function \(h_U\) (and thus \(h_V\)) is concave.
As such both \(h_U\) and \(h_V\) will achieve maximums on \(\Orb{K,E}\).
Next, we will show  that there must be a cube \([K',E]\) in \(\Orb{K,E}\) which maximizes both \(h_U\) and \(h_V\), before confirming that \(F_1(\Orb{K,E})\) can contract down to this point.

\begin{lemma}\label{concaveHeight}
Given a negative-definite forest \(G\), a vertex \(v\) of \(G\) and a cube \([K,E] \in \Qc\), then 
\[h_U([K,E]) >\frac{h_U([K+v,E]) +h_U([K-v,E])}{2}.\]
\end{lemma}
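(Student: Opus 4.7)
The plan is to prove strict concavity first at the level of single characteristic classes, which form the vertices of the cubes, by a direct quadratic computation, and then to lift the inequality to cubes using the defining formula $h_U([K,E]) = \min\{h_U(K') : K' \in [K,E]\}$.

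For the vertex case, since $h_U(K) = (K^2 + s)/4$ and the action of $v$ on $\Char(G)$ is an affine translation, the map $n \mapsto h_U(K + nv)$ is a quadratic polynomial in $n$. A direct expansion, or equivalently two applications of the formula $h_U(K + v) - h_U(K) = K(v) + v^2$ coming from the $f$ term in Subsection~\ref{introToLatticeHom}, yields
$$h_U(K + v) + h_U(K - v) - 2 h_U(K) = 2 v^2.$$
Since $G$ is negative definite and $v$ is a nonzero vertex, $v^2 < 0$, and this is precisely strict concavity along $v$-translates at the level of single characteristic classes.

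To bootstrap to cubes, I would pick a vertex $K_0$ of $[K, E]$ realizing $h_U([K, E]) = h_U(K_0)$. Because $[K \pm v, E]$ is simply the translate of $[K, E]$ by $\pm v$ (regardless of whether $v$ lies in $E$), the classes $K_0 \pm v$ are vertices of $[K \pm v, E]$, so
$$h_U([K + v, E]) \leq h_U(K_0 + v), \qquad h_U([K - v, E]) \leq h_U(K_0 - v).$$
Adding these and applying the vertex identity from the previous paragraph gives
$$h_U([K + v, E]) + h_U([K - v, E]) \leq 2 h_U(K_0) + 2 v^2 < 2 h_U([K, E]),$$
which is the desired strict inequality. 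No genuine obstacle arises: the argument is a short quadratic computation combined with the minimum-over-vertices formula, and the only minor sanity check is that translation of a cube by $\pm v$ is well defined whether or not $v$ already labels one of the edge directions of $[K, E]$, which is immediate from the convex-hull description.
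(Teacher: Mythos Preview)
Your proof is correct and follows essentially the same approach as the paper: both establish strict concavity of $h_U$ along $v$-translates at the level of characteristic classes via the quadratic identity, then pass to cubes using the minimum-over-vertices formula. The paper phrases the second step abstractly as ``a minimum of strictly concave functions is strictly concave,'' while you unpack that statement by picking the minimizing vertex $K_0$ explicitly; these are the same argument.
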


\begin{proof}
We start with the case of vertices, i.e. \(E=\emptyset\).
For any \(K \in \Char(G)\), the function \(f_{K,v}\colon \R \to \R\) given by \(f_{K,v}(t) := \frac{K^2 +s}{4} + tK(v) + t^2v^2\) is a concave function, since it is a parabola with negative leading coefficient.
Observe that \(h_U(K+iv)=f_{K,v}(i)\).
For an arbitrary choice of \(E\), by definition,
\[h_U([K+iv,E])=\min \{ f_{K',v}(i)\, :\, K' \mbox{ is a vertex of }[K,E] \},\]
and because minimizing over strictly concave functions gives a strictly concave function, the result follows in this case.
\end{proof}

\begin{lemma}\label{v0filteredBigBehavior}
Let \(G_{v_0}\) be a negative definite graph with unweighted vertex \(v_0\) and let be $v$ be some weighted vertx of \(G_{v_0}\). If $b_v[K,E]>0$ then $c_v[K,E]=0$. Similarly, if $a_v[K,E]>0$ then $d_v[K,E]=0$
\end{lemma}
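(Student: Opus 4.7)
\smallskip

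Plan. I would translate both claims, via Proposition~\ref{spherehdif}, into inequalities between values of $h_U$ on translates of the face cube $[K,E\setminus\{v\}]$, and then transport a vertex-level identity to the cube level using the argmin structure of $h_U$.

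Setting $E':=E\setminus\{v\}$, Proposition~\ref{spherehdif} applied to $h_U$ on $[K,E]$ identifies $b_v[K,E]>0$ with $h_U([K+v,E'])>h_U([K,E'])$. Applying the same proposition to $h_V$ on $[K,E]$ and using $h_V(\,\cdot\,)=h_U(\,\cdot\,+v_0)$, the conclusion $c_v[K,E]=0$ becomes $h_U([K+v_0+v,E'])\geq h_U([K+v_0,E'])$. Thus the first claim reduces to
\[h_U([K+v,E'])>h_U([K,E']) \;\implies\; h_U([K+v_0+v,E'])\geq h_U([K+v_0,E']).\]
The second claim reduces to the analogous statement obtained by reversing the $v$-inequality and translating the cube by $v_0$.

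The central computational input is the vertex identity $h_U(K'+v)-h_U(K')=K'(v)+v^2$, for $K'\in\Char(G)$, which yields
\[h_U(K'+v_0+v)-h_U(K'+v_0)\;=\;\bigl(h_U(K'+v)-h_U(K')\bigr)+2\,v_0\cdot v.\]
Since $v\neq v_0$ are distinct vertices of a plumbing forest representing a rational homology sphere, $v_0\cdot v\in\{0,1\}$. Moreover, for characteristic $K'$ we have $K'(v)\equiv v^2\pmod 2$, so these vertex $v$-slopes are even integers: a strictly positive slope is at least $2$, and a strictly negative slope is at most $-2$. Combined with the $2v_0\cdot v$ shift, this handles the vertex case ($E'=\emptyset$) of both claims.

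To upgrade to the cube level, let $J\in[K+v+v_0,E']$ be a vertex minimizing $h_U$, so $J-v$ is a vertex of $[K+v_0,E']$ and
\[h_U([K+v+v_0,E'])-h_U([K+v_0,E']) \;\geq\; h_U(J)-h_U(J-v) \;=\; (J-v)(v)+v^2.\]
It suffices to show the right-hand side is non-negative. Writing $J=K+v+v_0+\sum_{w\in I_J}w$, the minimality of $J$ gives the standard one-step inequalities $J(w)\leq w^2$ for $w\in I_J$ and $J(w)\geq -w^2$ for $w\in E'\setminus I_J$. The hardest step will be that these inequalities constrain $J(w)$ only for $w\in E'$, whereas the desired bound is on $J(v)$; to bridge this, I would combine the minimality inequalities with the slope lower bound at the argmin $K_0^*$ of $h_U$ on $[K,E']$ forced by the hypothesis, using the negative-definiteness of the intersection form and the bound $v_0\cdot v\leq 1$ to control how far apart the argmin index sets of $h_U$ on $[K,E']$ and on $[K+v+v_0,E']$ can be. The second claim is proved by the symmetric argument with the $v$-direction reversed, the extra translation by $v_0$ in the statement being absorbed directly into the shift $K'\mapsto K'+v_0$ in the vertex identity.
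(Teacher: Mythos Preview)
Your reduction of the two claims to inequalities of the form
\[
h_U([K+v,E'])>h_U([K,E'])\ \Longrightarrow\ h_U([K+v_0+v,E'])\geq h_U([K+v_0,E'])
\]
is correct, and so is the vertex identity $h_U(K'+v_0+v)-h_U(K'+v_0)=\bigl(h_U(K'+v)-h_U(K')\bigr)+2\,v_0\cdot v$, which settles $E'=\emptyset$.

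The gap is exactly where you flag it. Picking the $h_U$-argmin $J$ on $[K+v_0+v,E']$ and trying to show $(J-v)(v)+v^2\geq 0$ requires a lower bound on $J(v)$, but the one-step minimality inequalities you write down only constrain $J(w)$ for $w\in E'$, not for $v$. The hypothesis gives information about the argmin on $[K+v,E']$, which is a \emph{different} cube (shifted by $v_0$), so its argmin index set need not agree with $I_J$; your proposed bridge via ``negative-definiteness and $v_0\cdot v\leq 1$'' is not a proof, and I do not see a clean way to complete it along these lines.

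The paper avoids this difficulty entirely by working not with slopes in the $v$-direction but with the $f$-values themselves under the shift $K\mapsto K+v_0$. After reducing (via Proposition~\ref{ConnectSumsKnot}) to the case where $v_0$ is a leaf with unique neighbor $u$, one has the exact formula
\[
f([K+v_0,I])=f([K,I])+[u\in I]\ \in\ \{f([K,I]),\,f([K,I])+1\}
\]
for every $I\subseteq E$. Taking minima over $I$ with $v\notin I$ and with $v\in I$ gives $A_v[K+v_0,E],B_v[K+v_0,E]\in[A_v[K,E],A_v[K,E]+1]$ and $[B_v[K,E],B_v[K,E]+1]$ respectively, hence $\Delta_v[K+v_0,E]\geq \Delta_v[K,E]-1$. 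Since $b_v[K,E]>0$ forces $\Delta_v[K,E]\geq 1$, this yields $\Delta_v[K+v_0,E]\geq 0$, i.e.\ $c_v[K,E]=0$. No argmin-chasing is needed because the shift is controlled uniformly over all subsets $I$, not just at a minimizer. Note also that the reduction to $v_0$ a leaf is what keeps the shift in $\{0,1\}$; without it, $x_I\cdot v_0$ could exceed $1$ and the bound would fail.
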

\begin{proof}
First, we can use Proposition \ref{ConnectSumsKnot} and the definition of the boundary map on a tensor product to reduce to the case where \(v_0\) is a leaf.
We will without loss of generality consider the case where $b_v[K,E]>0$, thus implying that $B_v[K,E]>A_v[K,E]$.
Note that \(c_v[K,E]= a_v[K+v_0,E]\), so we wish to show that \(B_V[K+v_0,E]\geq A_v[K+v_0,E]\).
For every $I \subset E \cup \{v\}$, let $x_I = \sum_{w\in I}w$.
If $u \in I$, where \(u\) is the vertex to which \(v_0\) is adjacent, then 
\begin{align*}
    f([K +v_0,I])&=\frac{(K+v_0)(x_I)+ x_I^2}{2}\\
    &= \frac{K(x_{I\setminus u}) + K(u)+2 +x_I^2}{2} \\
    &= f([K,I])+1,
\end{align*}
and if $u\notin I$ then
\begin{align*}
    f([K +v_0,I])&=\frac{(K+v_0)(x_I)+ x_I^2}{2}\\
    &= \frac{K(x_{I}) +x_I^2}{2} \\
    &= f([K,I]),
\end{align*}
Therefore for any vertex \(K+x_I \) in \([K,E]\) we have that
\[h_U(K+x_I) - h_U(K) = h_V(K+x_I) - h_V(K) + \varepsilon,\]
where \(\varepsilon \in \{0,1\}\).

As such 
\begin{align*}
    h_V[K,E - \{v\}]-h_V(K)&=h_U([K,E-\{v\}])-h_U(K)+\alpha \\
    h_V([K+v,E-v]) - h_V(K) &= h_U([K +v,E-\{v\}]-h_U(K) +\beta,
\end{align*}
where $\alpha,\beta \in \{0,1\}$ depending on if the minimizing values of $f([K,I])$ necessarily had $u \in I$ or not.
Hence, $\Delta_v[K+v_0,E]\geq \Delta_v[K,E]-1\geq 0$, and thus $c_v[K,E]=0$.
\end{proof}

\begin{proposition}\label{prop:subconDecomp}
For any cube \([K,E]\) with \(v\notin E\), the doubly filtered space \(F_{1}(\Orb{K,E})\) is subcontracible.
\end{proposition}
\begin{proof}
By Lemma \ref{concaveHeight} \(h_U\) has at least one maximum.
Furthermore concavity ensures that even if there are  two maximum, they are right next to each other and thus the front and back faces of a cube supported in direction \(v\) in \(F_{1}(\Orb{K,E})\) which has the same maximal height.
Additionally, any local maxima must be a global maxima.
Furthermore \(v_0\) commutes with the action of \(v\) and thus provides a bijection between \(D_1(\Orb{K,E})\) and \(D_1(\Orb{K+v_0,E})\) and thus we also get similar statements about \(h_V\).

Lemma \ref{v0filteredBigBehavior} also guarantees that there is a cube \([K',E]\) that maximizes both \(h_U\) and \(h_V\).
Otherwise, suppose that the \(h_U\) maximizing \([K_1,E]\) does not overlap with a \(h_V\) maximizing cube \([K_2,E]\) and with out loss of generality assume that with respect to the action of \(v\) the largest \([K_1,E]\) is smaller than the smallest \([K_2,E]\).
So, \(a_v[K_1,E\cup \{v\}]>0\) reflecting \(h_U([K_2+v,E])<h_U([K_2,E])\).
However, by the concavity of \([K_2,E]\) we have 
\[h_V([K_1+v,E])-h_V([K_1,E])\geq h_V([K_2,E])-h_V([K_2-v,E])>0,\]
and thus \(d_v[K_1,E\cup\{v\}]>0\), providing our contradiction.

Now let \([K',E]\) be one such maximizing cube.
The continuous homotopy which retracts \(F_1(\Orb{K,E})\) down to a point on \([K',E ]\) will indeed be doubly filtered, since concavity prevents \(h_U\) and \(h_V\) from having local maxima not connected to \([K',E]\).
\end{proof}

\subsection{Comparisons to other work} \label{subsec:Compare}
Here we have enough information to compare with the previous work that uses quasi-fibrations and contractions.
An example of an argument given using quasi-fibrations appears in the proofs of invariance for analytic lattice homotopy \cite[Theorem 4.3.2]{anLattice} and the topological lattice homotopy type \cite[Proposition 7.3.5]{normalSurfSingBook}.
It also appears in the reduction theorems, with the case for topological lattice homotopy type given in \cite{reductionTheorem} and the analytic lattice homotopy type given in \cite[Theorem 4.6.7]{anLattice}.
The arguments in each case follow a similar pattern. 
There is an open covering of the reduced space that comes from a cube decomposition compabitble with the reduction, where for each\([K',E]\) of the reduced space one considers the interior of all cubes containing \([K',E]\).
The the preimage of each of these open sets is then subcontractible in the non-reduced space.
These not only allow for the application of a \v{C}ech covering argument on the level of homology but they satisfy Theorem 6.1.5 of \cite{ShapeThry} to show the reduction is an quasi-fibration with contractible homotopy fiber and thus a weak equivalence.

Up to the subdivision mentioned at the start of this section the preimage of each of those open sets will correspond to a potentially thickened \(B(D,D,F_2)(\Orb{K,E})\).
In particular, asking for the interiors of all cubes containing a given cube, precisely captures the overcategory of that cube, and a slice of this preimage perpendicular to the cube will be isomorphic to \(B(D,D,F_2)(\Orb{K,E})\).
The the need to be working with open sets means that the open set also has some thickness from the directions in which the cube \([K,E]\) is supported, but this does not affect subcontractibility.

Keeping track of these open covers one could potentially, reconstruct the space given by \(C(F_1,D,F_2)\) but without the overarching framework of homotopy colimits this would be more difficult and painstaking.
Without specifying the open cover, specifying contractible space of maps would be difficult if not impossible, especially since unlike with more traditional fibrations the desired maps are not necessarily sections.

For an example where these distinctions would matter consider the category with 4 objects \(a,b,c,d\) and both \(c\) and \(d\) have maps to both \(a\) and \(b\).
Consider the diagram \(F_1\) of topological spaces which as a point over \(c\) and \(d\) and the interval \([0,1]\) over \(a\) and \(b\) with the points form \(c\) mapping to \(0\) and the points form \(d\) mapping to \(1\).
Clearly there is a natural transformation \(\eta\) from \(F_1\) to the diagram \(F_2\) which as a single point as each object and corresponding maps between them.
This is shown in Figure \ref{fig:hocolimNeededDiag}.
There are no natural transformations in the reverse direction; however, as all the objects of \(F_1\) and \(F_2\) are contractible, there is a contractible space of homotopy coherent natural transformations from \(F_i\) to \(F_j\) where \(i,j\in \{1,2\}\), in particualar guarunteeing a contractible space of homotpy invereses.
Note that \(\hocolim F_1\) and \(\hocolim F_2\) both are homeomorphic to \(S^1\) with \(\hocolim \eta\) a standard homotopy equivalence between them and furthermore is a quasi-fibration.
However, \(\hocolim\eta\) has no sections and the space of all maps, even all maps that provide a homotopy inverse to \(\eta\), are not contractible due to both \(\hocolim F_1\) and \(\hocolim F_2\) being circles.

\begin{figure}
\centering
\scalebox{.7}{\includegraphics[width=.7\textwidth]{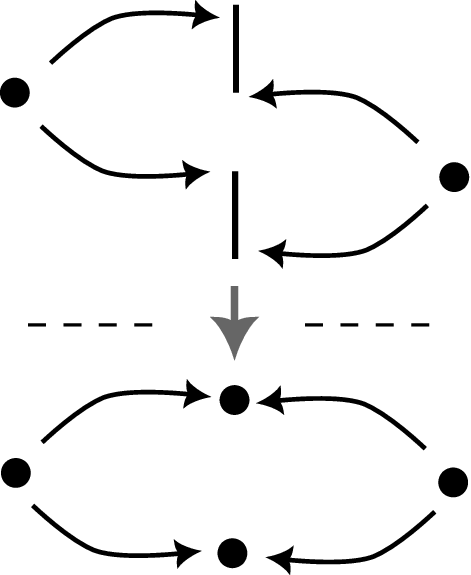}}
\caption{Figure depicting a case where the structure of the homotopy colimit is needed. Depicted are two diagrams, seperated by the dashed line, that yield circles as their homotopy colimits and a natural transformation between them. There are no natural transformations in the reverse direction.
However, the space of homotopy inverses between the homotopy colimits is not contractible.
The notion of the homotopy colimit and homotopy coherent natural transformation, however, can identify a contrible space of homotopy inverses.}\label{fig:hocolimNeededDiag}
\end{figure}

The contractions of \OSS{} in the appendix of  \cite{knotsNLatticeUnpub} (the arxiv version of \cite{knotsAndLattice}) meanwhile provide a concrete description of the map we have described for the homotopy colimit mapping each \(F_1(\Orb{K,E})\) to a specific cube in each orbit that maximizes \(h_U\) and \(h_V\), though they focus on the case when \(v\) is a good vertex, i.e. \(-v^2\) is bigger than the degree of \(v\).
The homotopy \(H_v\) from \(\id\) to \(C_v\) helps modulates the contraction of each orbit \(\Orb{K,E}\) to a point and the terms of \(\bou H_v+H_v\bou\) which are supported in direction \(v\) determine the homotopy coherence relations for a homotopy coherent natural transformation.
Reproducing the contractible space of maps without explicitly using the terminology of homotopy colimits may be technically possible with this framework, but would put more onus on tracking lots of details thus decreasing the clarity, especially once one wants to compare diagrams of subcontractible spaces where neither diagram is simply a diagram of filtered points as will be needed for more general naturality statements.

\subsection{Generalizations and choices}

We will now explore how \OSS{} choose the maximizing cubes in each \(v\)-orbit is made, as this will help highlight how said choice needs to be modified to also maximize \(h_V\) for different choices of \(v_0\).
In particular, Remark \ref{knotNeededRem} highlights how these techniques help in pinpointing the error in \cite{otherInvariance}.
Additionally, the tools to find cubes \([K,E]\) maximizing \(h_U\) and \(h_V\) in their \(v\) orbits may be useful in extending the invariance results to link lattice homology where even finer control may be needed.
Proposition \ref{parityMatters} provides an algorithm for finding a cube maximizing \(h_U\) and \(h_V\) only requiring computation of \(h_U\) and the graph theoretic distance between \(v_0\) and \(v\), while Proposition \ref{regionOfMaxs} limits the region in a \(v\)-orbit of a cube \([K,E]\) where a simultaneous maximum for \(h_V\) and \(h_U\) can occur based on the value of \(K(v)\). 
The author also used Lemma \ref{recursiveDCalc}, which is used to prove both of these propositions, in searching for the example highlighed in Remark \ref{knotNeededRem} and to handle the case of blow ups of edges before discovering the updated proof of invariance in \cite{normalSurfSingBook}.

The map \(C_v\) is defined by first defining the homotopy \(H_0\), which we will refer to as \(H_0^a\) and the map
\[C_0:=\id +\bou H_0^a+H_0^a\bou,\]
which on specific inputs will stabilize under iteration to give the contraction  \(C_v\).
Essentially \(H_0^a\) moves a cube \([K,E]\) one step closer to the given choice of maximizing \([K+iv,E]\), and iterating this proceedure will stabilize on each initial cube to give this maximizing cube plus a portion of connecting cubes need to make this a homotopy coherent natural transformation.
We reproduce this homotopy below as $\tilde{H}_0^a$.
To define \(H^a_0\) \OSS{} define a value \(T\) for each orbit \(\Orb{K,E}\), based on the behavior of $[K+2i_0v^{\ast},E]$ for the value of \(i_0\) where $(K+i_0v)(v)\in [v^2,-v^2)$.
If $a_v[K+i_0v,E\cup v]=0$, Ozsv\'ath, Stipsicz, and Szab\'o define $[K,E]$ to be of type-a and set $T=-1$, and if $a_v[K+i_0v,E\cup v]>0$ they define $[K,E]$ to be of type-b and set $T=1$.

\begin{equation*}
H_0^a([K,E]) =
\begin{cases}
0 & \mbox{if } v\in E \mbox{ or } (T-2)(-v^2)\leq K(v)<T(-v^2)\\
[K,E\cup v] & \mbox{if }v\notin E \mbox{ or } K(v)\geq T(-v^2)\\
[K-v,E\cup v] &\mbox{if }v \notin E \mbox{ or } K(v)<(T-2)(-v^2)
\end{cases}
\end{equation*}

In exploring how the maximizing cube for \(h_U\) used in the corresponding \(C_v\) is chosen, notice that Lemma 7.3 of \cite{knotsNLatticeUnpub} guarantees that for $v\notin E$ a good vertex at least one of $[K+i_0v,E]$ and $[K+(i_0+1)v,E]$ is a maximum of \(h_U\) for the \(v\) orbit of \([K,E]\). 
A closer inspection of the proof Lemma 7.3 reveals that \(a_v[K+(i_0+1)v,E]>0\), since the inequality they use to show \(B_v[K+(i_0+1)v,E]\leq A_v[K+(i_0+1)v,E]\) would need \(K(v)\leq v^2\) for equality to be achieved.
As such, we can rule out the case where both \([K + (i_0+1)v,E]\) and \([K+(i_0+2)v,E]\) are maxima, but we cannot rule out both $[K + (i_0-1)v,E]$ and $[K+i_0v,E]$ being maxima.
In that case, we can simply choose the maximum that is in $\{i_0,i_0+1\}$ as the guiding maximum for the contraction.
Then, the only problem is if both $[K+i_0v_0,E]$ and $[K+(i_0+1)v,E]$ are \(h_U\) maximizing cubes, at which point the decision to define type-a in terms of $a_v[K+i_0v,E\cup v]=0$ (instead of $b_v[K+i_0v,E\cup v]>0 $) means the preference is to treat $[K + (i_0-1)v,E]$ as the preferred maximizing cube.

We will generalize the decisions made abovbe to cases where \(v\) is not good.
In particlar, both \(C_v^a\) and \(C_v^b\) will be the result of homotopy cohrent natural transformations on the decomposition \(F_1\) on \(D_{G,\{v\}}\), which object wise are deformation retractions down to a point that is a cube maximizing \(h_U\).
The difference is that when given a choice of cubes to deformation retract down onto \(C_v^a\) will always choose a cube \([K+iv,E]\) with a higher value of \(i\), while \(C_v^b\) will always choose the a cube \([K+iv,E]\) with lower \(i\).
Note that algebraically there is only one choice of homotopy coherence relation, and thus the choice of \(h_U\)-maximizing cube for each \(\Orb{[K,E]}\) determines the map.
In terms of Figure \ref{fig:exampleContraction}, \(C_v^a\) is the one with all the question marks become up arrows, and \(C_v^b\) has all the question marks become down arrows.

To explore when \(C_v^a\) and \(C_v^b\) are filtered with respect to \(h_V\), we will first need to better track how \(h_U([K,E])\) and \(h_U([K+v,E])\) differ for a given \([K,E]\), i.e. \(2\Delta_v[K,E\cup\{v\}]\) in a way that can more easily track how the action of \(v_0\) affects them.
While the argument of  Lemma \ref{v0filteredBigBehavior} suffices to show that \(v_0\) cannot affect \(\Delta_v[K,E\cup\{v\}]\) too much, it does not give much finer control over that change. 

Recall that
\[\Delta_v[K,E]=\frac{h_U([K+v,E-\{v\}]) - h_U([K,E-\{v\}])}{2}.\]
As such in order to check when \(C_v^a\) or \(C_v^b\) is tuned to \(v_0\), we will need to check that if \(\Delta_v[K,E]=0\), then \(\Delta_v[K+v_0,E]\geq 0\) in the case of \(C_v^a\) or \(\Delta_v[K+v_0,E] \leq 0\) in the case of \(C_v^b\).

We will want to be able to compute $\Delta_v[K,E]$ recursively, and to do so will require us to define some auxiliary functions.
To start, the recursion will work graph theoretically, where the calculation at the vertex \(v\) combines results from caclulations on the neighbors of \(v\), i.e. the vertices that are adjacent to \(v\) graph theoretically, which in turn will appeal to cacluations done on vertices at distance two from \(v\), ect.
Because we assume that our graphs give rational homology three-spheres, our graphs are always forests, this process never loops back in on itself and completes in finite time.
To aid in this recursion for a vertices $v$ and $w$ of $G$, we will define $E^v_w$ to be all vertices $u \in E$ such that the unique path from $u$ to $v$ includes $w$.
The path from \(u\) to \(v\), if it exists is unique by the assumption that \(G\) is a forest.
Vertices \(u\) not on the same component of \(G\) as \(v\) will not have an affect on \(\Delta_v[K,E]\), due to the connect sum forming a tensor product.
If $u \in E$ then $u\in E_u^v$, and $E_v^v=E$.
Now define $\Delta_u^v[K,E]$ to be $\Delta_u[K,E_u^v]$ (and similarly $A_u^v[K,E]=A_u[K,E_u^v]$ and $B_u^v[K,E]=B_u[K,E_u^v]$).

\begin{lemma}\label{recursiveDCalc}
Let $u,v \in G$, and let $u_1,u_2,\ldots, u_m$ be the neighbors of $u$ in \(E\) which are farther from $v$ than $u$.
Let $\gamma^v_u[K,E]$ be the number of $u_i$ for which $\Delta_{u_i}[K,E_{u_i}^v]<0$.
Then $\Delta_u[K,E_u^v]=f[K,\{u\}]+\gamma^v_u[K,E]$.
\end{lemma}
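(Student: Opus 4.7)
The plan is to exploit the forest structure of $G$ (which follows from $Y$ being a rational homology sphere) to decompose $E_u^v$ according to the neighbors $u_1,\ldots,u_m$ of $u$ on the side away from $v$. Since every path between vertices in two distinct subtrees hanging off $u$ must traverse $u$, one has
\[
E_u^v \setminus \{u\} = \bigsqcup_{i=1}^m E_{u_i}^v,
\]
and every subset $I \subseteq E_u^v \setminus \{u\}$ factors uniquely as $I = \bigsqcup_i I_i$ with $I_i \subseteq E_{u_i}^v$. Because vertices lying in different subtrees below $u$ are non-adjacent in $G$, $x_{I_i} \cdot x_{I_j} = 0$ for $i \neq j$, so $f([K, I]) = \sum_{i=1}^m f([K, I_i])$. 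Minimising term-by-term yields
\[
A_u^v[K, E] = \sum_{i=1}^m \min\bigl(A_{u_i}^v[K, E],\, B_{u_i}^v[K, E]\bigr),
\]
with the convention that $B_{u_i}^v = +\infty$ when $u_i \notin E$.

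For $B_u^v[K, E]$ I would force $u \in I$, write $I = \{u\} \sqcup \bigsqcup_i I_i$, and expand $\bigl(u + \sum_i x_{I_i}\bigr)^2$. The only cross-terms that survive are the $u \cdot x_{I_i}$, and because $u_i$ is the unique vertex of $E_{u_i}^v$ adjacent to $u$, one verifies that $u \cdot x_{I_i}$ equals $1$ if $u_i \in I_i$ and $0$ otherwise. This gives
\[
f([K, I]) = f([K, \{u\}]) + \sum_{i=1}^m f([K, I_i]) + \#\{i : u_i \in I_i\},
\]
and minimising each $I_i$ independently produces
\[
B_u^v[K, E] = f([K, \{u\}]) + \sum_{i=1}^m \min\bigl(A_{u_i}^v[K, E],\, B_{u_i}^v[K, E] + 1\bigr).
\]

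Subtracting,
\[
\Delta_u^v[K, E] = f([K, \{u\}]) + \sum_{i=1}^m \epsilon_i, \qquad \epsilon_i := \min\bigl(A_{u_i}^v,\, B_{u_i}^v + 1\bigr) - \min\bigl(A_{u_i}^v,\, B_{u_i}^v\bigr).
\]
Since $K$ is characteristic, $f$ takes integer values on cubes, so a short case split shows $\epsilon_i = 1$ exactly when $B_{u_i}^v < A_{u_i}^v$, that is, when $\Delta_{u_i}^v[K, E] < 0$, and $\epsilon_i = 0$ otherwise. Summing recovers $\sum_i \epsilon_i = \gamma_u^v[K, E]$ and completes the proof. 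The main obstacle is simply the bookkeeping behind the two factorisations above; both rest squarely on $G$ being a forest, which gives both $x_{I_i}\cdot x_{I_j}=0$ and the identification of $u\cdot x_{I_i}$ with the indicator of $u_i \in I_i$.
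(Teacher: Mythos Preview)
Your argument is correct and follows essentially the same route as the paper: decompose $E_u^v$ via the subtrees hanging off $u$, compute $A_u^v$ and $B_u^v$ as sums of independent minimisations over the $E_{u_i}^v$, and subtract. The paper phrases the minimisation by fixing the set $\mathcal{Y}=\{u_i:\Delta_{u_i}^v<0\}$ up front and then ``comparing terms'', whereas you keep the $\min$'s explicit and compute each $\epsilon_i$ by a case split; these are the same computation.
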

\begin{proof}
First, we will find an expression for $B_u[K,E_u^v]$, and thus let $I \subseteq E_u^v$ with $u \in I$.
We can rewrite $I$ as $\{u\} \sqcup \bigsqcup_{u_i}I_{u_i}^v$, where the \(I_{u_i}^v\) are defined analogously to the \(E_u^v\) above.
Using this, have that 
\begin{equation*}
 f[K,I] = f[K,\{u\}]+ \sum_{u_i \in I_{u_i}^v}(f([K,I_{u_i}^v]) +1) + \sum_{u_i\notin I_{u_i}^v}f([K,I_{u_i}^v]).
\end{equation*}
For, finding the set $I$ that minimizes $f[K,I]$, we will then work on minimizing the contribution from each $I_{u_i}^v$.
If $u_i \in I_{u_i}^v$, the minimum contribution is $B_{u_i}[K,E_{u_i}^v]+1$, whereas if $u_i \notin I_{u_i}^v$ then the minimum is achieved by $A_{u_i}[K,E_{u_i}^v]$.
Let $\mathcal{Y}= \{u_i \, |\, \Delta_{u_i}[K,E_{u_i}^v]<0\}$, so that \(|\mathcal{Y}|= \gamma_u^v[K,E]\).
For \(u_i\in \mathcal{Y}\), \(B_{u_i}[K,E_{u_i}^v]-A_{u_i}[K,E_{u_i}^v]<0\) and thus the minimizing choice of \(I\) for \(f[K,I_{u_i}^v]\) comes from the calculation of \(B_{u_i}[K,E_{u_i}^v]\) and not \(A_{u_i}[K,E_{u_i}^v]\) and is decisive enough that the addition of 1 brought by the inclusion of \(u_i\) in terms contributing to \(B_{u_i}[K,E_{u_i}^v]\) does not affect the end result.
hus, bytracking which of these is the true minimum, we can thus compute that
\[ B_u[K,E_u^v]=f[K,\{u\}]+ \sum_{u_i \in \mathcal{Y}}(B_{u_i}[K,E_{u_i}^v]+1) + \sum_{u_i \notin \mathcal{Y}}A_{u_i}[K,E_{u_i}^v].\]
A similar calculation allows us to work out that
\[ A_u^v[K,E] = \sum_{u_i\in \mathcal{Y}}B_{u_i}[K,E_{u_i}^v] + \sum_{u_i\notin\mathcal{Y}} A_{u_i}[K,E_{u_i}^v]. \]
Comparing terms gets us our end result.
\end{proof}

Finally the following propositions help us determine when \added{ \(C_v^a\) and \(C_v^b\) are actually filtered with respect to \(h_V\).}
Proposition \ref{parityMatters} gives us that at least one of \(C_v^a\) and \(C_v^b\) will always work, while Proposition \ref{regionOfMaxs} provides a range in which appropriate choices of maximizing \([K,E]\) can be made in terms of \(K(v)\).
This generalizes Lemma 7.3 of \cite{knotsAndLattice} beyond the setting of good vertices, which is relevant to this paper because blowing up an edge produces a non-good vertex, while still remaining relatively controlled.

\begin{proposition}\label{parityMatters}
Suppose the graph \(G\) represents a plumbing of a rational homology sphere \(Y\).
If the parity of the distance from \(v\) to \(v_0\) is odd then \(C_v^a\) is tuned with respect to \(v_0\).
If the parity of the distance from \(v\) to \(v_0\) is even then \(C_v^b\) is tuned with respect to \(v_0\).
If \(v_0\) and \(v\) are on separate components of the graph \(G\) then both \(C_v^a\) and \(C_v^b\) are tuned with respect to \(v_0\).
\end{proposition}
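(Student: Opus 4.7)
The plan is to reformulate the tunedness condition in terms of $\Delta_v$ evaluated on a $v_0$-shifted characteristic class, then propagate sign-control through the plumbing tree via Lemma \ref{recursiveDCalc} and an induction on distance from $v_0$.

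Since $h_V([K',E']) = h_U([K'+v_0, E'])$, one has $\Delta_v[K+v_0, E] = \tfrac{1}{2}(h_V([K+v, E-\{v\}]) - h_V([K, E-\{v\}]))$. Thus $C_v^a$, which breaks an $h_U$-tie by choosing the top face, is tuned to $v_0$ precisely when the implication $\Delta_v[K,E] = 0 \Rightarrow \Delta_v[K+v_0, E] \geq 0$ holds; $C_v^b$ requires the reverse inequality. If $v$ and $v_0$ lie on separate components of $G_{v_0}$, then $v_0 \cdot u = 0$ for every weighted $u$ in $v$'s component, so $f([K+v_0, I]) = f([K, I])$ for every subset $I$ of vertices there; hence $\Delta_v[K+v_0, E] = \Delta_v[K, E]$ and both conditions hold vacuously.

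For the connected case, I would prove by bottom-up induction on depth in the tree rooted at $v$ (compatible with the root-to-leaves recursion of Lemma \ref{recursiveDCalc}) the strengthened claim
\[
(-1)^{d(u,v_0)+1}\bigl(\Delta_u^v[K+v_0, E] - \Delta_u^v[K, E]\bigr) \geq 0
\]
for every weighted $u$ in $v$'s component and every $K, E$. By Lemma \ref{recursiveDCalc}, $\Delta_u^v[K, E] = f([K, \{u\}]) + \gamma_u^v[K, E]$, and the $f$-term shifts by $v_0 \cdot u$, which equals $+1$ exactly when $d(u, v_0) = 1$ and $0$ otherwise. The combinatorial key is that in the tree every child $u_i$ of $u$ satisfies $d(u_i, v_0) \equiv d(u, v_0) + 1 \pmod{2}$, since $u_i$ is adjacent to $u$; the inductive hypothesis then says $\Delta_{u_i}^v$ shifts in the direction opposite to the desired direction for $\Delta_u^v$. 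Because $\gamma_u^v$ counts strict negativity among the $\Delta_{u_i}^v$, a weak decrease in any $\Delta_{u_i}^v$ can only push $\gamma_u^v$ upward and a weak increase can only push it downward, so the resulting change in $\gamma_u^v$ has precisely the sign wanted for $\Delta_u^v$ and reinforces the $v_0 \cdot u$ contribution (which itself is nonzero only in the favorable ``odd'' base case $d(u,v_0)=1$). Setting $u = v$ and specializing to $\Delta_v[K,E] = 0$ then yields the proposition.

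The main obstacle is the bookkeeping around when a $\gamma$-count actually changes, since $\gamma_u^v$ only registers a change when some $\Delta_{u_i}^v$ strictly crosses zero. Verifying that every such crossing moves in the inductively-controlled direction, and that no combination of $\gamma$-changes cancels the $v_0 \cdot u$ contribution with the wrong sign, requires a careful case split on whether $u$ is adjacent to $v_0$, whether $u$ lies on the $v$-to-$v_0$ path, and the pre-shift sign of each $\Delta_{u_i}^v$. Once this alignment is nailed down, the induction runs cleanly through all three cases of the proposition.
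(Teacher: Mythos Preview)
Your proposal is correct and follows essentially the same approach as the paper. Both reduce tunedness to the implication \(\Delta_v[K,E]=0 \Rightarrow \pm\Delta_v[K+v_0,E]\geq 0\), handle the disconnected case by observing \(\Delta_v\) is unchanged, and in the connected case propagate sign control of \(\Delta_u^v[K+v_0,E]-\Delta_u^v[K,E]\) through the tree via Lemma~\ref{recursiveDCalc}. The paper inducts along the unique path \(v_0,u_1,\ldots,u_m=v\), noting at each step that all neighbors of \(u_{i+1}\) other than \(u_i\) have \(E_w^v\) disjoint from the \(v_0\)-adjacent vertex and hence unchanged \(\Delta_w^v\); you instead run a full bottom-up tree induction rooted at \(v\), which proves the same inequality (with equality off the path).

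Your closing ``obstacle'' is overstated. Because \(G_{v_0}\) is a forest, every child \(u_i\) of \(u\) has \(d(u_i,v_0)\) of the opposite parity from \(d(u,v_0)\), so by induction \emph{all} the \(\Delta_{u_i}^v\) shift weakly in the same direction. Monotonicity of \(\gamma_u^v\) in the \(\Delta_{u_i}^v\) is then a one-line observation, and the only place the \(f\)-term moves is at the unique \(v_0\)-neighbor \(u_1\) in \(v\)'s component (uniqueness because a second neighbor would create a cycle in \(G_{v_0}\)), where it moves by \(+1\) in the favorable odd direction. No case split on whether \(u\) lies on the path or on pre-shift signs is required; the paper's proof is correspondingly short.
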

\begin{proof}
To show that \(C_v^a\) is doubly filtered, we have to show that if \(\Delta_v[K,E]=0\) then \(\Delta_v[K+v_0,E]\geq 0\).
To show that \(C_v^b\) is doubly-filtered, we have to show that if \(\Delta_v[K,E]=0\) then \(\Delta_v[K+v_0,E]\leq 0\).
First to handle the case where \(v_0\) and \(v\) are on separate components, use Proposition \ref{ConnectSumsKnot} to see that \(\Delta_v [K,E] = \Delta_v[K_1,E_1]\).
Because \([K_1,E_1]\) is not affected by the action of \(v_0\), \(\Delta_v[K,E]= \Delta_v[K+v_0,E]=0\) as needed.

Now suppose that \(v_0\) and \(v\) are on the same component of \(G\).
Since \(G\) represents a rational homology sphere, we know that it is a forest and all vertices represent spheres.
In particular, there exists a unique path from \(v_0\) to \(v\), which we will write \(v_0,u_1,u_2,\ldots, u_m=v\).
We will prove more generally that for \(i\) odd that \(\Delta_{u_i}^v[K+v_0,E]\geq \Delta_{u_i}^v[K,E]\) and for \(i\) even that \(\Delta_{u_i}^v[K+v_0,E]\leq \Delta_{u_i}^v[K,E]\).
In the case where \(i=1\) this reduces to the calculation in \cite{knotsAndLattice} or equivalently using Lemma \ref{recursiveDCalc} we have that the number \(\gamma\) remains unchanged and \(f[K,\{u_1\}]\) has increased by 1.

Now assume we have shown it to be true for some \(i\).
Note that \(f[K,\{u_{i+1}\}]\) is unchanged between considering \([K,E]\) and \([K+v_0,E]\).
So, by Lemma \ref{recursiveDCalc}, we only need to consider the relation between \(\gamma_{u_{i+1}}^v[K,E]\) with \(\gamma_{u_{i+1}}^v[K+v_0,E]\), which leads to the corresponding relation between \(\Delta_{u_{i+1}}^v[K,E]\) and \(\Delta_{u_{i+1}}^v[K+v_0,E]\).
Furthermore, all such neighbors \(w\) of \(u_{i+1}\)  other than \(u_i\) will have \(\Delta_w^v[K+v_0,E]=\Delta_w^v[K,E]\), since from the perspective of \(E_w^v\), \(v_0\) is on a separate component or otherwise has no means of influencing the outcome. 
Finally, if \(i\) is even we would have by our inductive hypothesis \(\Delta_{u_i}^v[K+v_0,E]\geq \Delta_{u_i}^v[K,E]\), so \(\gamma_{u_{i+1}}^v[K,E]\geq \gamma_{u_{i+1}}^v[K+v_0,E]\).
Similarly if \(i\) is odd, we would have by our inductive hypothesis that \(\Delta_{u_i}^v[K+v_0,E]\leq \Delta_{u_i}^v[K,E]\), so \(\gamma_{u_{i+1}}^v[K+v_0,E]\geq \gamma_{u_{i+1}}^v[K,E]\).
\end{proof}

\begin{figure}
\centering
\includegraphics[width=.8\textwidth]{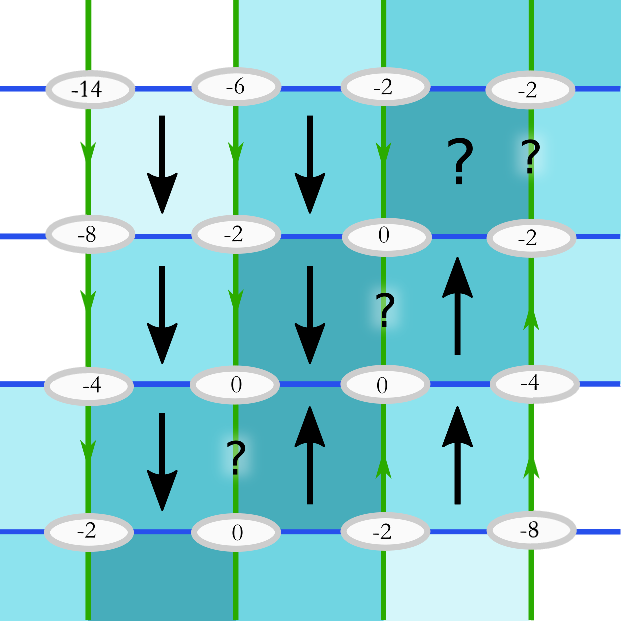}
\caption{An visualization of a contraction with respect to the \(-1\) weighted vertex in Subfigure \ref{subfig:ExamplePlumbing}, which we will call \(e\). Here a downward arrow on a cube signals that \(H_0\) sends the back face of that cube to that cube, while an upward arrow signals that \(H_0\) sends the front face of that cube to that cube or in other words for the final homotopy coherent natural trasformation of the decompositon \(F_1\) on \(D_{G,\{e\}}\), arrows represent where the deformation retraction will be moving in that direction over that cube. \added{Question marks signal \(e\)-orbits, where there are multiple .}}
\label{fig:exampleContraction}
\end{figure}

\begin{remarkn}\label{knotNeededRem}
Note that input from the knot is needed, even in the case such as in Figure \ref{fig:exampleContraction} where the vertex being contracted comes from a blow-up.
Figure \ref{fig:exampleContraction} provides a description of how Lemma \ref{v0filteredBigBehavior} already restricts the choice of maximizing elements in each orbit with arrows indicating when a homotopy would need to be moving the back face forward or the front face backward.
Question marks come from when there are two maximizing cubes and Lemma \ref{v0filteredBigBehavior} does not provide a restriction.
Recall that Figures \ref{fig:knotLatticeExample} and \ref{fig:knotLatticeExample2} both provide two different ways to add a knot to this graph.
In particular \([K_0,\{v_1,v_2\}]\) has a quesition mark, where \(K_0\) is the characteristic cohomology class described in Example \ref{ex:latticeEx}.
Checking Figure \ref{fig:knotLatticeExample}, we would have that the upper right question marks need to become up arrows, but checking Figure \ref{fig:knotLatticeExample2} we would have that the upper right question marks need to become down arrows.
The chain homotopy provided by Jackson \cite[Lemma 2.5]{otherInvariance} does not depend on the choice of unweighted vertex and would not be doubly filtered for the example in Figure \ref{fig:knotLatticeExample2}.

Lemma \ref{recursiveDCalc} was used in locating the particular problem cube by realizing that we needed \(\Delta_{v_1}[K_0,E]\)=0, and if a knot adjacent to \(v_2\) were to affect this result we would need, by Lemma \ref{recursiveDCalc},
\[\Delta_{v_2}[K_0,E\backslash\{v_1\}]=f[K,\{v_2\}]= \frac{K(v_2)-2}{2}=-1.\]
This in turn forces in the calculation of \(\Delta_{v_1}[K,E]\) so  that
\[-1=f[K,\{v_1\}]=\frac{K(v_1)-1}{2}\]
and hence \(K\) must be \(K_0\).
\end{remarkn}

\begin{proposition}\label{regionOfMaxs}
Let \(v\) be a vertex with degree \(d\) and let \(\eta = \max\{v^2+d,0\}\).
Furthermore, let \(v_0\) be an unweighted vertex.
One can always select choice \added{of cubes \([K,E]\) for each \(v\)-orbit that maximize both \(h_U\) and \(h_V\)} so that 
\begin{equation}
2v^2-2\eta \leq K(v)< -v^2. \label{Kbounds}
\end{equation}
With \(v_0\) not adjacent to \(v\), we can further assume
\[2v^2-2\eta<K(v)<-v^2,\] provided such a \(K\) exists in the \(v\)-orbit of \([K',E]\).
\end{proposition}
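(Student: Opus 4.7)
The plan is to analyze each \(v\)-orbit using Lemma \ref{recursiveDCalc}, which gives
\(\Delta_v[K_i, E \cup \{v\}] = f[K_i, \{v\}] + \alpha_i\),
where \(K_i := K + iv\) and \(\alpha_i := \gamma_v^v[K_i, E \cup \{v\}]\) counts weighted neighbors \(w\) of \(v\) in \(E\) with \(\Delta_w^v[K_i, E \cup \{v\}] < 0\). A direct computation using \(f[K + v, I] - f[K, I] = v \cdot x_I\) shows that for \(w\) adjacent to \(v\) and \(I \subseteq E_w^v\), one has \(v \cdot x_I = [w \in I]\); hence \(\Delta_w^v[K_i]\) is strictly linear in \(i\) with slope \(+1\), so \(\alpha_i\) is weakly decreasing with values in \(\{0, 1, \ldots, d\}\). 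Since \(f[K_i, \{v\}]\) is linear in \(i\) with slope \(v^2\), the function \(\Delta_v[K_i]\) is strictly decreasing by at least \(|v^2|\) per step, yielding a unique transition index \(i^{**}\) at which \(\Delta_v\) passes from \(\geq 0\) to \(< 0\); this produces either a unique maximum at \([K_{i^{**}}, E]\) or a tied pair at \([K_{i^{**} - 1}, E]\) and \([K_{i^{**}}, E]\).

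Next I would verify the range bounds. For the upper bound \(K(v) < -v^2\): if a candidate max satisfies \(K(v) = -v^2\), then \(\Delta_v[K_{i^{**}}] \leq 0\) forces \(\alpha_{i^{**}} = 0\), placing us in the tied case; the alternate max then has \(K(v) = v^2 < -v^2\), so either way a max with \(K(v) < -v^2\) may be selected. For the lower bound \(K(v) \geq v^2 - 2\eta\): the inequality \(\Delta_v[K_{i^{**} - 1}] \geq 0\) yields \(K_{i^{**}}(v) \geq v^2 - 2\alpha_{i^{**} - 1}\), and analyzing the transition drop \(\Delta_v[K_{i^{**} - 1}] - \Delta_v[K_{i^{**}}] = |v^2| + (\alpha_{i^{**} - 1} - \alpha_{i^{**}})\) together with the structure of \(\alpha\) bounds the relevant count by \(\eta\); in the double-max case, if the natural choice falls short the alternate tied max recovers the bound. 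For tuning to \(v_0\), when two \(h_U\)-maxes tie, Corollary \ref{existsTunedContractions} allows us to pick one that also maximizes \(h_V\); Lemma \ref{v0filteredBigBehavior} ensures this selection remains compatible with the range. When \(v_0\) is not adjacent to \(v\), the same lemma implies \(h_V\) and \(h_U\) agree on the data involved, giving enough flexibility to upgrade to the strict inequality \(v^2 - 2\eta < K(v)\) whenever such a \(K\) exists in the orbit.

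The hardest step is the explicit bookkeeping for the lower bound, namely verifying that \(\alpha_{i^{**} - 1} \leq \eta\) in the unique-max case together with a parallel argument in the double-max case. The definition \(\eta = \max\{v^2 + d, 0\}\) is set up precisely so that for non-good vertices the extra room from \(\eta = v^2 + d\) absorbs the additional contributions to \(\alpha\) from the excess degree, while for good vertices the bound \(\eta = 0\) is maintained through the tied-max switch described above; this generalizes Lemma 7.3 of \cite{knotsAndLattice} beyond the good-vertex regime. Checking this calibration in every orbit, and then packaging the per-orbit choices into a global \(m\) consistent with the orbit-only dependence requirement and with tuning to \(v_0\), is the main technical content.
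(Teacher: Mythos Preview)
Your approach differs meaningfully from the paper's, and while the structural setup is sound, the execution has real gaps that are not merely bookkeeping.

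The paper does not locate the maximum and then bound it. Instead it argues pointwise: for any $[K,E]$ with $K(v)$ outside the target interval, it exhibits an adjacent cube in the $v$-orbit at which both $h_U$ and $h_V$ are at least as large, so a tuned choice of maxima can always be pushed toward the interval. For the upper side ($K(v)\geq -v^2$) this is Lemma 7.3 of \cite{knotsAndLattice} applied to $K$ and to $K+v_0$. For the lower side ($K(v)< v^2-2\eta$) the paper bounds $f[K,\{v\}]< v^2-\eta$ and $\gamma_v[K,E]\leq d\leq -v^2+\eta$ directly via Lemma \ref{recursiveDCalc}, then \emph{repeats the identical computation with $K$ replaced by $K+v_0$}, using that $(K+v_0)(v)=K(v)+2$ when $v_0$ is adjacent and $(K+v_0)(v)=K(v)$ otherwise. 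The $h_V$ side is thus handled by the same estimate shifted by $v_0$, not by Lemma \ref{v0filteredBigBehavior}, which controls $c_v,d_v$ rather than the location of tuned maxima and does not deliver what you need here.

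Your observation that $\alpha_i$ is weakly decreasing and $\Delta_v[K_i]$ strictly decreasing is correct, and from $\Delta_v[K_{i^{**}}]<0$ together with $\alpha_{i^{**}}\geq 0$ you do get $K_{i^{**}}(v)<-v^2$ automatically. But your lower bound hinges on $\alpha_{i^{**}-1}\leq \eta$, and this is exactly the nontrivial step: since $\alpha\leq d$ and for a bad vertex $\eta=v^2+d<d$, the inequality $\alpha_{i^{**}-1}\leq\eta$ does not follow from the crude bound, and your sentence ``the extra room from $\eta=v^2+d$ absorbs the additional contributions'' is an assertion, not an argument. Your tuning step is also circular: Corollary \ref{existsTunedContractions} produces \emph{some} tuned choice but says nothing about whether one lies in the interval, and your claim that for $v_0$ not adjacent ``$h_V$ and $h_U$ agree on the data involved'' is false---what is actually true (and what the paper exploits) is that $f[K+v_0,\{v\}]=f[K,\{v\}]$ in that case, so the strict inequality for $h_U$ transfers verbatim to $h_V$. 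The paper's pointwise argument avoids all of this precisely because it never needs to bound $\alpha$ at the transition index.
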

\begin{proof}
Observe that in each \(v\) orbit there will be a \(K\) (not necessarily maximizing either \(h_U\) or \(h_V\) that satisfies the conditions of Equation (\ref{Kbounds}).
So, we show separately that in searching for a maximizing cube, that cubes below the target range always have heights \(h_U\) and \(h_V\) less than or equal to that of the cube above them.
Similarly, a cube chosen above the target range will always have heights less than or equal to that of the cube below it.
As such, due to the convexity of \(h_U\) and \(h_V\) when searching for maximizing cubes, we always have the option of moving into the desired target range. 
To show that we can assume that \(K(v)< -v^2\), note that Lemma 7.3 of \cite{knotsAndLattice} gives that if \(K(v)\geq -v^2\) and \(v\in E\) then \(a_v[K,E]=0\)
Furthermore, if \(K(v)\geq -v^2\) then \((K+v_0)(v)\geq -v^2 \), so \(a_v[K+v_0,E]=0\) as well.
As such, we have both
\begin{align*}
h_U([K+v,E])&\geq h_U([K,E])\\
h_V([K+v,E])&\geq h_V([K,E]),
\end{align*}
so so may assume that a choice of cube \([K,E]\) maximizing \(h_U\) and \(h_V\) on their orbit has \(K(v)<-v^2\).

Now suppose that \(v_0\) is adjacent to \(v\), \(K(v)<2v^2 -2\eta\) and that \(v\in E\). 
Then, \(f[K-v,\{v\}] < v^2 - \eta\) and \(f[K-v+v_0,\{v\}] \leq v^2 -\eta\).
Furthermore,
\[\gamma_v[K-v,E] \leq |E|\leq d\leq -v^2+\eta,\]
and the same argument gives \(\gamma_v[K+v_0,E]\leq -v^2+\eta\)
Therefore, by Lemma \ref{recursiveDCalc}, we have that \(\Delta_v[K-v,E]<0\) and \(\Delta_v[K-v+v_0,E]\leq 0\).
As such,
\begin{align*}
h_U([K-v,E])&\geq h_U([K,E])\\
h_V([K-v,E])&\geq h_V([K,E]),
\end{align*}
and we may assume that the choice of maximizing cube in the orbit has \(K(v)\geq 2v^2-2\eta\).
If \(v_0\) is not adjacent to \(v\) then \(f[K,\{v\}]\) does not change between \([K,E]\) and \([K+v_0,E]\), and as such, we merely need to assume that \(K(v)\leq 2v^2 -2\eta\) to get the same result.
\end{proof}

\section{Invariance and Naturality}\label{sec:InvarNat}

\subsection{Invariance under Blow Ups}\label{subsec:BlowUps}

In this section we will specifically be showing that knot lattice homology and homotopy are invariant under blow-ups, which will complete the proofs of Theorems \ref{invarianceResult} and \ref{topInvarianceResult}.
Recall from Theorem \ref{BlowUpsSuffice} that the topological knot type determines the nested singularity type, so we only need to consider the five moves shown in Figure \ref{fig:trefblowup}:
\begin{enumerate}
\item generic blow-ups,
\item blow-ups of a weighted vertex,
\item blow-ups of the unweighted vertex,
\item blow-ups of edges connecting two weighted vertices, and
\item blow-ups of edges incident to the unweighted vertex.
\end{enumerate}

We will first cover some constructions and lemmas that hold in all of the cases above.
These  lemmas \added{remove} the dependence on knowledge about \(v_0\), and thus we will conclude the section highlighting how the proofs of invariance for the lattice homotopy of the underlying three-manifold to lift/specialize to the case of knots. 
\added{Because these use the approach of Section \ref{ContractionSec}, by the comments in subsection \ref{subsec:Compare} these will also specifically recover previous proofs of invariance such as \OSS's proof for the blow up of edges incident to the unweighted vertex or the proofs of N\'emethi for the underlying three-manifold invariant.
Furthermore, by laying out such a general strategy, we hope to potentially provide tools useful in other proofs, for example, in extending these results to links.}
For a concrete example, we will then discuss the case of blowing up the unweighted vertex.


Let \(G_{v_0}\) be a negative-definite forest with unweighted vertex, and let \(G_{v_0}'\) be some blow-up of \(G_{v_0}\) at a point  \(p\).
Note that \(p\) can be perturbed slightly to give a generic blow-up \(G_{v_0}''\). 
Let \(X\) be the plumbing for \(G_{v_0}\), \(X'\) the plumbing for \(G_{v_0}'\), and \(X''\) the plumbing for \(G_{v_0}''\).
Both \(X'\) and \(X''\) can be achieved by adding a two-handle to \(X\) representing the exceptional sphere \(e\).
Additionally, \(X'\) and \(X''\) are diffeomorphic, and in fact, if the blow-up was already generic or a blow-up of the unweighted vertex, their handle decompositions are the same.
Otherwise \(X'\) and \(X''\) are related by a handle slide or two of existing handles over the new handle, which become the new edges connecting \(e\) to the rest of the graph.

Now because the intersection form for \(X''\) is the intersection form for \(X\) direct sum \([-1]\), there is a canonical projection of characteristic cohomology classes from \(\Char(G'')\) to \(\Char(G)\) which forgets evaluation on \(e\).
Furthermore, the handle slide from \(X'\) to \(X''\) provides a map identifying \(\Char(G')\) and \(\Char(G'')\).
Let \(\tilde{P}:\Char(G')\to \Char(G)\) be the composition of these maps.
The map \(\tilde{P}\) preserves the orbits given by the action of the second homology and thus provides a bijection between \(\SpincX{Y_G}\) and \(\SpincX{Y_{G'}}\) where \(Y_G\) and \(Y_{G'}\) are the associated three-manifolds.
As such, we will be using \(\tf\) to represent a \Spinc structure on \(Y_G\) or \(Y_{G'}\) under this identification.
So, given a particular \(\tf\), we get that \(\tilde{P}\) restricts to a map from \(\Char(G',\tf)\) to \(\Char(G,\tf)\).

In more singularity theoretic language, this is equivalent to defining \(\tilde{P}\) based off of how the canonical map from a blow-up to the original resolution affects homology.
To get a map on the second cohomology, the map sending a curve to its total transform preserves the intersection form and goes in the appropriate direction to induce the needed cohomological map.
The change of basis induced by the handle slides play the same role as changing between the total and proper transforms.

The map \(\tilde{P}\) can be extended to a continuous \added{(but as of yet potentially not filtered)} map
\[P^{\nat}:\CFKb^{\nat}(G_{v_0}',\tf)\to \CFKb^{\nat}(G_{v_0},\tf),\]
by reproducing the above identifications using coefficients in \(\R\) rather than \(\Z\).
The handle slides provide a change of basis for the second homology and thus a change in cellular structure, so the map from \(\Char(G')\to \Char(G'')\) is not necessarily cellular.
However, the change in basis only involved handle slides over the new handle for \(e\), which results in a sheering action on the previous cube decomposition in the direction of \(e\).
During the projection to \(\Char(G)\) the action of this sheering disappears and cubes in \(\Char(G')\) are sent to cubes in \(\Char(G)\).
As such \(P^{\nat}\) is a cellular map and gives a bigraded chain map
\[P: \CFKb(G_{v_0}',\tf)\otimes \Z[U,U^{-1},V,V^{-1}] \to \CFKb(G_{v_0},\tf)\otimes \Z[U,U^{-1},V,V^{-1}].\]
Note \(\CFKb^{\infty}\) represents taking coefficients localized at \(U\), which will have a morphism for every not-necessarily filtered cellular map.
The name has been chosen deliberately to match the map \(P\) given in the appendix of \cite{knotsAndLattice}.

\added{
This construction can in fact be generalized to give a functor from the category of resolutions mentioned in Section \ref{SingBackSec} to the cell complexes and cellular maps, which takes a resolution to the lattice space and an analytic map \(\phi\) between resolutions to a map \(P_{\phi}^{\nat}\).
The maps \(P_{\phi}^{\nat}\) can be defined directly using the total transforms defined above.
However, because each \(\phi\) can be decomposed into a sequence of blow ups, to check that \(P_{\phi}\) is filtered or doubly filtered only requires checking that it is filtered when \(\phi\) represents a blow up.
}
\begin{lemma}\label{PsingleFilt}
The map \(P^{\nat}\) commutes with the actions of \(\Ii^{\nat}\) and \(\Jj^{\nat}\), and in particular is doubly-filtered if it is filtered with respect to \(h_U\).
\end{lemma}
\begin{proof}
\added{
To show \(P^{\nat}\) commutes with the desired maps it will suffice to show that it commutes for the characteristic cohomology classes \(K \in \Char(G)\).
In particular, both of of \(\Ii^{\nat}\) and \(\Jj^{\nat}\) can be viewed as extending the action from the vertices linearly, with cubes as the convex hulls of their vertices, and \(P\) also respects this linear action.
}

\added{
Given a weighted vertex \(v_i\) in \(G_{v_0}\), let \(\bar{v}_i\) represent the total transform of \(v_i\) in \(G'_{v_0}\).
Then \(\lrangle{v_0,v_i}=\lrangle{v_0,\bar{v}_i}\) regardless of what point \(p\) was blown up.
In particular, if \(p\) is not at an intersection of \(v_i\) and \(v_0\) any previous intersections will remain and no new intersections will be created.
Otherwise, the proper transforms of \(v_0\) and \(v_i\) will not intersect, but the total transform of \(v_i\) will contain a copy of \(e\) which now intersects \(v_0\).
Since \(\lrangle{v_0,v_i}=\lrangle{v_0,\bar{v}_i}\), the action of \(v_0\)  must commute with \(P\).
Furthermore, \(-P(K)(v_i) = -K(\bar{v}_i)=(-K)(\bar{v}_i)\), which verifies \(P\) commuting with \(\Ii\).
Because \(\Jj\) is the compositoin of \(\Ii\) and the action of \(v_0\), \(P\) also commutes with \(\Jj\). 
}

If \(P^{\nat}\) is singly filtered, then it must be that for every \([K,E] \in \Qc_{G'}\), \(h_U(P([K,E]))\geq h_U([K,E])\).

However, in that case,
\begin{align*}
h_V(P([K,E]))&= h_U(P([K,E])+v_0) \\
&=h_U(P([K+v_0,E])) \\
&\geq h_U([K+v_0,E])\\
&=h_V([K,E]),
\end{align*}
and thus \(P^{\nat}\) actually induces a map with coefficients in \(\Z[U,V]\).
Furthermore, the image under \(P^{\nat}\) of a cube \([K,E]\) with \(e\notin E\) is a single cube in \(\Qc_{G}\), not just algebraically but on the nose.
As such, the calculation of the height function for cubes where \(P([K,E])\) is algebrically non-zero translates to the check for whether \(P^{\nat}\) is doubly-filtered.
If \(e\in E\), then \(P^{\nat}([K,E])\) is the same as \(P^{\nat}\) of \([K,E]\)'s front and back faces in direction \(e\).
Because \(P^{\nat}\) is filtered on both the front and back faces of \([K,E]\) and \(h_U([K,E]\) is the minimum of \(h_U\) across the front and back faces, then \(P^{\nat}\) must be filtered on the points in \([K,E]\) as well.
\end{proof}

\added{The key observation needed to apply the framework of Section \ref{ContractionSec} is that} the map \(P^{\nat}\) is onto and the preimage of any given point is an \(e\)-orbit in \(\CFKb^{\nat}(G_{v_0}',\tf)\).
\added{For a cube \([K,E]\) in \(\CFKb^{\nat}(G_{v_0}',\tf))\) let \(F_2(\Orbe{K,E})\) be a point in the center of \(P([K,E])\).
This will define a functor \(F_2\colon D_{G,e}\to \Filt_{[\Q\times \Q:2\Z\times 2\Z]}\), so that \(\hocolim F_2 \cong \CFKb^{\nat}(G_{v_0},\tf)\).
The map \(P^{\nat}\) is induced by a natural transformation \(\eta_{P}\colon F_1\to  F_2\), where \(F_1\) is the functor discussed in Section \ref{ContractionSec} for \(G_{v_0}\).
The next key step is then to show that there exists a homotopy coherent natural transformation from \(F_2\) to \(F_1\) to provide the homotopy inverse.}

\begin{lemma}\label{lemma:singleFtoDoubleF}
Let \(G_{v_0}\) be a negative definite forest with an unweighted vertex \(v_0\) and let \(G_{v_0}'\) be a blow up of \(G_{v_0}\).
Additionally, let \(\tf \in \SpincX{Y_G}\).
If \(P\) provides a singly filtered map
\[P^{\nat}\colon \CFb^{\nat}(G', \tf)\to \CFb^{\nat}(G,\tf)\]
and there exists a (not necessarily continuous) \(\phi\colon \Qc_{G,\tf}\to \Qc_{G',\tf}\) providing a section for \(P\) which is filtered with respect to \(h_U\), then
\[P^{\nat}\colon \CFKb^{\nat}(G_{v_0}',\tf) \to \CFKb^{\nat}(G_{v_0},\tf) \] 
has a homotopy inverse \(R^{\nat}\).
\end{lemma}
\begin{proof}
By Lemma \ref{PsingleFilt} \(P\) inducing a map on \(\CFb(G,\tf)\) implies that \(P^{\nat}\) is doubly filtered, and the only thing left to check is that it has a homotopy inverse.
To do so, we will need to upgrade \(\phi\) to a doubly filtered section \(\tilde{\phi}\).
Because we have already shown that each orbit of the exceptional sphere \(e\) in \(\CFKb^{\nat}(G_{v_0},\tf)\), and thus every preimage under \(P\) of a cube \([K,E]\in \Qc_{G,\tf}\) will have a cube \([K',E]\) that maximizes both \(h_U\) and \(h_V\) and define \(\tilde{\phi}([K,E])=[K',E]\)
 \[h_U([K',E])\geq h_U(\phi(P([K',E])))\geq h_U(P([K',E]))=h_U([K,E]).\]
To carry out a similar argument, it will suffice to provide a section \(\phi'\colon \Qc_{G,\tf}\to \Qc_{G',\tf}\) which is filtered with respect to \(h_V\) but not necessarily \(h_U\).
By defining \(\phi'([K,E])= \phi([K+v_0,E])-v_0\) for \([K,E]\in \Qc_{G,\tf}\) we then have
\begin{align*}
h_V(\tilde{\phi}([K,E]) &\geq h_V(\phi'([K,E]))\\
&=h_V(\phi([K+v_0,E])-v_0) \\
&=h_U(\phi([K+v_0,E])) \\
&\geq h_U([K+v_0,E]) \\
&= h_V([K,E]),
\end{align*}
as needed.

The section \(\tilde{\phi}\) guarantees that for every \(e\) orbit \(\Orbe{[K,E]}\), there is a filtered map from \(F_2\left(\Orbe{K,E}\right)\) to \(F_1\left(\Orbe{K,E}\right)\).
So by Proposition \ref{prop:CFDGcontract}, the space \(C(F_2,D,F_1)\) and thus
\[\Filt_{[\Q\times\Q:2\Z\times2\Z]}^{D_{G',e}}(B(D,D,F_2),B(D,D,F_1))\]
is contractible and in particular nonempty.
Let \(\eta_R\) be such a natural transformation.
Furthermore, Proposition \ref{prop:CFDGcontract} can also be applied to show that for each \(F_i\) the space of self natural transformations of \(B(D,D,F_i)\), which contains the identity on \(F_i\) and the composition of \(\eta_P\) with \(\eta_R\) in the appropriate order, is contractible.
Hence, \(\eta_P\) and \(\eta_R\) are doubly filtered homotopy inverses.

\end{proof}

Finally we can provide the proof for Theorem \ref{topInvarianceResult} and thus also Theorem \ref{invarianceResult}.

\begin{proof}
By Lemma \ref{lemma:singleFtoDoubleF} it suffices to show for each type of blow up that \(P^{\nat}\) is singly filtered and has a singly filtered (not necessarily continuous) section.
Note that these are both statements about the lattice spaces of the ambient three-manifold and in fact for blow ups of weighted vertices and edges N\'emethi proves those facts in \cite{normalSurfSingBook}, where there \(P^{\nat}\) is referred to as \(\pi_{\R,\ast}\) with \(\ast\) signifying a particular layer of the filtration.
Showing that \(\pi_{\R,\ast}\) is a well defined map is the proof that it is filtered, and the proof that it is onto provides the proof that it has a section.
Additionally Lemma \ref{PsingleFilt} guarantees that \(P^{\nat}\) commutes with \(\Ii\), \(\Jj\) and \(\Gamma\) and thus any homotopy inverse will commute with these up to homotopy.
N\'emethi assumes the graph is connected and thus generic blow ups and blow ups of the unweighted vertex still remain.

Both of these cases can be expressed as taking the connect sum with a different version of the unknot and thus can be reduced to showing that copy of the unknot produces the same result as the graph with a single unweighted vertex.
Ozsv\'ath, Stipsicz, and Szab\'o already showed  these to be chain homotopy equivalent to \(\Z[U,V]_{0,0}\), where the subscript represents the element 1 being in bigrading \((0,0)\) \cite{knotsAndLattice}.
However, we will review the case of blowing up the unweighted vertex to show that this can be done in the topological setting and to provide a specific concrete example of how Lemma \ref{lemma:singleFtoDoubleF} applies.
The proof for generic blow-ups follows similarly to this one.

Let \(G_{v_0}\) be the graph for the unknot given by a single unweighted vertex, and let \(G'_{v_0}\) represent this graph blown up at the unweighted vertex, with the new vertex having label \(e\).
The knot lattice space associated to \(G_{v_0}\) is a point \(K\) with \(h_U(K)=0\) and \(h_V(K)=0\), while the knot lattice space associated to \(G'_{v_0}\) is shown in Figure \ref{fig:blownUpUnknot}.
The map \(P^{\nat}\) is the unique map from \(\CFKb^{\nat}(G_{v_0}',\tf)\) to \(\CFKb^{\nat}(G_{v_0},\tf)\).
This map is doubly-filtered by Lemma \ref{PsingleFilt} and one can observe directly that \(h_U([K,E])\leq 0\) and \(h_V([K,E])\leq 0\) for \([K,E]\) a cube of \(\CFKb^{\nat}(G_{v_0}',\tf)\).
Letting \(K_0\) represent the characteristic cohomology class on \(G'\) with \(K_0(e)=-1\), observe that \(h_U(K_0)=0\) and \(h_V(K_0)=0\), i.e. \(K_0\) maximizes \(h_U\) and \(h_V\).
We can define a doubly filtered section by \(\phi(K)=K_0\).

Note that \(D_{G',e}\) consists of only a single object with no nontrivial morphisms.
As such in this case the homotopy inverse is \(\phi\) since no higher homotopy coherent relations are needed.
In particular the homotopy from Proposition \ref{prop:subconDecomp} suffices as a homotopy between  \(\eta_P\) and \(\eta_R\) and the identity.
\begin{figure}
\centering
\begin{subfigure}[b]{.9\textwidth}
\includegraphics[width=\textwidth]{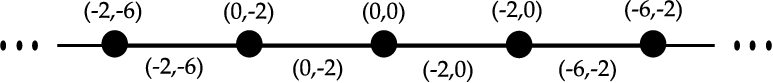}
\caption{The knot lattice space for a blown up unknot. The heights for the cells are labeled with \((h_U,h_V)\) with the vertices labeled above and the edges labeled below.} \label{subfig:blownUpUnknotklhomotopy}
\end{subfigure}
\vspace{.3in}

\begin{subfigure}[b]{.6\textwidth}
\begin{center}
\includegraphics{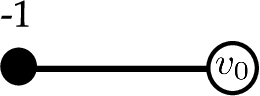}
\caption{the graph of the blown up unknot}\label{subfig:blownUpUnknotGraph}
\end{center}
\end{subfigure}
\caption{Subfigure \ref{subfig:blownUpUnknotklhomotopy} gives the knot lattice space \(\CFKb^{\nat}(G_{v_0}',\tf)\)f or the case of the blown up unknot used in the proof that knot lattice homotopy is invariant under blowing up the unweighted vertex. The graph used to create this example is given in Subfigure \ref{subfig:blownUpUnknotGraph}.}\label{fig:blownUpUnknot}
\end{figure}

Furthermore, \(\Ii^{\nat}\) and \(\Jj^{\nat}\) are given by the identity on \(\CFKb^{\nat}(G_{v_0},\tf)\), while on \(\CFKb^{\nat}(G_{v_0}',\tf)\), \(\Ii^{\nat}\) is given by reflection across the middle of the edge to the left of \(K_0\), and \(\Jj^{\nat}\) is given by reflection across the vertex \(K_0\).
Lemma \ref{PsingleFilt} has that \(P^{\nat}\) commutes with both.
Meanwhile, \(\phi\) commutes with \(\Jj^{\nat}\), and commutes with \(\Ii^{\nat}\) up to the homotopy relating \(K_0\) with \(K_0-e\).
\end{proof}

\subsection{Naturality for graphs}\label{subsec:Naturality}

The following builds on ideas from \cite{naturalityHF} to discuss the naturality of this construction, i.e. the level to which this guarantees a unique identification between the lattice homologies of any two graphs.
At the moment there is a risk that, for example, a collection of blow ups and blow downs that starts and ends at the same graph will result in a string of homotopy equivalences that together to become a self homotopy equivalence not homotopic to the identity.
One way to express this would be to produce a functor on a category equivalent to \(C_{\GenAlgKnot}^{\Spin^c}\) but with objects corresponding to resolutions, as the well definedness of the functor would ensure that compositions do not produce unexpected monodromy.
To capture that we have only produced homotopy equivalences and higher order homotopies between our maps, our notion of equivalent will allow a morphism in \(C_{\GenAlgKnot}^{\Spin^c}\) to be associated to a contractible space in our new category.

\begin{lemma}
There is a functor defined up to unique natural isomorphism 
\[\Phi_{\min}\colon C_{\GenAlgKnot}^{\Spin^c}\to \Filt_{[\Q\times\Q:2\Z\times 2\Z]},\]
which sends \((Y,K,\tf)\) to \(\CFKb^{\nat}(G_{v_0},\tf)\), where \(G_{v_0}\) is a dual graph for the minimal resolution of the weak nested singularity associated to \((Y,K)\).
All morphisms \(\Phi_{\min}(f)\) commute with \(\Ii\) and \(\Jj\).
\end{lemma}
\begin{proof}
This functor will be defined up to unique isomorphism due to minimal resolutions being defined up to unique isomorphism.
Let \((Y_1,L_1,\tf_1)\) and \((Y_2,L_2,\tf_2)\) be weak generalized algebraic knots with \Spinc structures on the ambient three-manifolds and \(f\colon (Y_1,L_1,\tf_1)\to (Y_2,L_2,\tf_2)\) be a graph induced diffeomorphism which pulls back \(\tf_2\) to \(\tf_1\).
Let \(\tilde{X}_i\) be a minimal resolution for the weak nested singularity with associated dual graph \(G_{i,v_0}\) associated to \((Y_i,L_i)\) and let \(\tilde{f}\colon  \tilde{X}_1\to \tilde{X}_2\) be an extension of \(f\) across the minimal resolutions.
From \(\left(\tilde{f}^{-1}\right)^{\ast}\), we get an isomorphism between the \(H^2(\tilde{X_i};\R)\), and for each base oriented surface \([S_i]\) in \(G_{1,v_0}\) we have that there exists a base surface \([S_j']\) in \(G_{2,v_0}\) so that \(\tilde{f}_{\ast}([S_i])= \pm[S_j]\).
The map \(\tilde{f}_{\ast}\) respects the action of \(H_2(\tilde{X_i};\Z)\) so cubes \([K,E]\in \Qc_{G_1,\tf}\) are sent to cubes in \(\Qc_{G_2,\tf}\) in a continuous fashion.
The squaring of characteristic cohomology classes is preserved leads to \(h_U\) being preserved, and since the total transforms of the underlying curves \(C_i\) are sent to each other, the action of \(v_0\) is preserved and thus \(h_V\) is preserved.
This ensures that the identifications of the \(H^2(\tilde{X_i};\R)\) induces a doubly-filtered continuous map
\[\tilde{f}_{lat}\colon \CFKb(G_{1,v_0},\tf)\to \CFKb(G_{2,v_0},\tf)\]
Because \(\left(\tilde{f}^{-1}\right)^{\ast}\) is functorial with respect to \(\tilde{f}\), the result is functorial.
\end{proof}

\begin{theorem}
There exists a topologically enriched category \(\tilde{C}_{\GenAlgKnot}^{\Spin^c}\) which 
\begin{enumerate}
\item has objects \((\pi,\tf)\) where \(\pi\colon \tilde{S}\to S\) is a resolution of a weak nested singularity \((S,C,\{p_1,\ldots, p_n\})\) with link a generalized algebraic knot in a rational homology three-sphere \((Y,K)\), and \(\tf \in \SpincX{Y}\).
\item has morphism spaces \(\tilde{C}_{\GenAlgKnot}^{\Spin^c}((\pi_1,\tf_1),(\pi_2,\tf_2))\) that have a contractible component for each morphism in \(C_{\GenAlgKnot}^{\Spin^c}((Y_1,K_1,\tf_1),(Y_2,K_2,\tf_2))\) between the weak generalized algebraic knots.
\end{enumerate}
and there is a functor
\[\CFKb^{\nat}\colon \tilde{C}_{\GenAlgKnot}^{\Spin^c}\to \Filt_{[\Q\times\Q\colon 2\Z\times2\Z]}\]
which takes a resolution \((\pi,\tf)\) to \(\CFKb^{\nat}(G_{v_0},\tf)\) where \(G_{v_0}\) is the dual graph for \(\pi\).
On minimal resolutions \(\CFKb^{\nat}\) agrees with the functor \(F_{min}\).
\end{theorem}
\begin{proof}
Let \(\pi_1\) and \(\pi_2\) be some resolutions of weak nested singularities with dual graphs \(G_{1,v_0}\) and \(G_{2,v_0}\) and let \(\pi_1'\) and \(\pi_2'\) be minimal resolutions for their respective singularities with dual graphs \(G_{1,v_0}^{\min}\) and \(G_{2,v_0}^{\min}\).
Furthermore let \(f\in C_{\GenAlgKnot}^{\Spin^c}((Y_1,K_1,\tf_1),(Y_2,K_2,\tf_2))\) where \((Y_i,K_i)\) is the link of singularity for the weak nested singularity associated to \(\pi_i\).
We will now construct the contractible space associated to \(f\) in \(\tilde{C}_{\GenAlgKnot}^{\Spin^c}((Y_1,K_1,\tf_1),(Y_2,K_2,\tf_2))\).

Because minimal resolutions are terminal in the category of resolutions associated to a weak nested singularity, there exist unique morphisms \(\phi_1\colon \pi_1\to \pi_1'\) and \(\phi_2\colon \pi_2\to \pi_2'\).
Let \(E_i\) represent the set of vertices in \(G_i\) that are in the exceptional fiber of \(\phi_i\).

Then, as with the case where the \(\phi_i\) are blow ups and blow downs, the objects of \(D_{G_i,E_i}\) can be associated to cubes in \(\Qc_{G_i^{\min}}\) with morphisms indicating boundary relations, as \(P_{\phi_i}\) induces an isomoprhism \(\hat{P}_{\phi_i}\) between \(D_{G_i,E_i}\) and \(D_{G_i^{\min},\emptyset}\).
Now let \(F_{i}\) be the decomposition  of \(\CFKb(G_{i,v_0},\tf)\) associated to \(D_{G_i,E_i}\), which we can view as a functor on \(D_{G_i^{\min},\emptyset}\), and let \(F_{i}^{\min}\) be the decomposition of \(\CFKb(G_{i,v_0}^{\min},\tf)\) associated to \(D_{G_i^{\min},\emptyset}\).
Furthermore the isomorphism given by \(\Phi_{\min}(f)\) will induce an isomoprhism
\[\hat{\Phi}_{\min}(f)\colon D_{G_1^{\min},\emptyset}\to D_{G_2^{\min},\emptyset}.\]

We will define our morphism space associated to \(f\) to be
\[\Filt^{D_{G_1^{\min},\emptyset}}_{[\Q:2\Z]}(B(D_{G_1^{\min},\emptyset},D_{G_1^{\min},\emptyset},F_{1}),B(D_{G_1^{\min},\emptyset},D_{G_1^{\min},\emptyset},\hat{\Phi}_{\min}(f)^{\ast}(F_{2})).\]
This produces a topologically enriched category and is essentially a subcategory of the Grothendieck construction.
Taking homotopy colimits over the \(D_{G_i^{\min},\emptyset}\) functorially recovers \(\CFKb^{\nat}(G_{i,v_0},\tf)\). 
That this will not depend on the choice of minimal resolution since \(\Phi_{\min}\) is defined up to a canonical isomorphism based on said choice of minimal resolution.
What remains to show is that this morphism space associated to \(f\) is contractible. 
By proposition \ref{prop:CFDGcontract} it suffices to show that for all \([K,E]\) that \(F_{1}([K,E])\) and \(F_{2}([K,E])\) are subcontractible and there exists a morphism from \(F_{1}([K,E])\) to \(F_{2}([K,E])\).

We will first show that for every cube \([K,E] \in \Qc_{G_1^{\min}}\), both \(F_{i}([K,E])\) and \(\hat{\Phi}_{\min}(f)^{\ast}\left(F_{i}^{\min}([K,E])\right)\) will have doubly-filtered homotopy equivalent results.
Factoring the \(\phi_i\) into a sequence of blow ups \(\phi_{i,j}\colon \tilde{\pi}_{i,j}\to \tilde{\pi}_{i,j+1}\) provides a sequence of functors \(F_{i,j}\) on \(D_{G_i^{\min},\emptyset}\) so that \(F_{i,j}\) and \(F_{i,j+1}\) are objectise homotopy equivalent.
In particular when showing that \(\phi_{i,j}\) provides a homotopy equivalence between \(\CFKb^{\nat}(G_{i,j+1,v_0},\tf)\) and \(\CFKb(G_{i,j,v_0},\tf)\), one shows that the functors \(F'_{i,j}\) and \(F'_{i,j+1}\) on the larger category \(D_{G_{i,j+1},\emptyset}\) evaluate on objects to give homotopy equivalent subcontractible objects.
Using the unique map \(\tilde{\phi}_{i,j}\) from \(\tilde{\pi}_{i,j+1}\) to our chosen minimal resolutions \(\pi_i'\), we can use the action of \(P_{\tilde{\phi}_{i,j}}\) to construct functors from \(D_{G_{i,j+1},\emptyset}\) to \(D_{G_i^{\min},\emptyset}\).
We can transfer the objectwise homotopy equivalences between \(F'_{i,j}\) and \(F'_{i,j+1}\) to functors over \(D_{G_i^{\min},\emptyset}\) by taking homotopy colimits across the fibers of this functor.
These will remain objectwise homotopy equivalences in the process, and chaining these together gives the doubly fitlered homotopy equivalences between \(F_{i}([K,E])\) and \(F_{i}^{\min}([K,E])\).

Note now that for all \([K,E]\in \Qc_{G_1^{\min}}\), \(F_{i}^{\min}([K,E])\) is a doubly-filtered point, and by the map \(\Phi_{\min}(f)\) being an isomorphism, \(F_{1}^{\min}([K,E])\) and \(F_{i}^{\min}([K,E])\) have the same double filtration.
As such, \(F_{1}([K,E])\) and \(\hat{\Phi}_{\min}(f)^{\ast}\left(F_{2}([K,E])\right)\) are both subcontractible and there exist morphisms going both directions between them.
\end{proof}

\bibliographystyle{abbrv}

\bibliography{references}

\end{document}